\colorlet{lgray}{gray!60}
\colorlet{lteal}{teal!70}
\colorlet{llteal}{teal!50}
\colorlet{lllteal}{teal!30}
\definecolor{borderteal}{HTML}{284B4B}
\definecolor{color1}{HTML}{28D025}
\definecolor{color2}{HTML}{FFCA0E}
\definecolor{color3}{HTML}{EE8100}
\definecolor{color4}{HTML}{FF3939}
\definecolor{color5}{HTML}{FF5CB8}
\definecolor{color6}{HTML}{8D47FF}
\definecolor{color7}{HTML}{00B7BF}
\definecolor{color8}{HTML}{AABCC0}
\tikzset{
        hatch distance/.store in=\hatchdistance,
        hatch distance=5pt,
        hatch thickness/.store in=\hatchthickness,
        hatch thickness=5pt
        }
\pgfqpoint{\hatchdistance}{\hatchdistance}}
\tikzset{
        hatch distance/.store in=\hatchdistance,
        hatch distance=5pt,
        hatch thickness/.store in=\hatchthickness,
        hatch thickness=5pt
        }
\pgfqpoint{\hatchdistance}{\hatchdistance}}
\tikzset{
  node split radius/.initial=1,
  node split color 1/.initial=teal,
  node split color 2/.initial=red,
  node split color 3/.initial=blue,
  node split half/.style={node split={#1,#1+180}},
  node split/.style args={#1,#2}{
    path picture={
      \tikzset{
        x=($(path picture bounding box.east)-(path picture bounding box.center)$),
        y=($(path picture bounding box.north)-(path picture bounding box.center)$),
        radius=\pgfkeysvalueof{/tikz/node split radius}}
      \foreach \ang[count=\iAng, remember=\ang as \prevAng (initially #1)] in {#2,360+#1}
        \fill[line join=round, draw, fill=\pgfkeysvalueof{/tikz/node split color \iAng}]
          (path picture bounding box.center)
          --++(\prevAng:\pgfkeysvalueof{/tikz/node split radius})
          arc[start angle=\prevAng, end angle=\ang] --cycle;
} } }
\newcommand{\deflabel}[2]{\expandafter\def\csname label{#1}\endcsname{#2}}
\newcommand{\lab}[1]{\csname label{#1}\endcsname}
\DeclareMathOperator{\fdom}{FDOM}
\DeclareMathOperator{\dom}{DOM}
\DeclareMathOperator{\et}{and}
\newcommand{\eps}{\varepsilon}
\newcommand{\bA}{\mathbf{A}}
\newcommand{\B}{\mathcal{B}}
\newcommand{\bB}{\mathbf{B}}
\newcommand{\cF}{\mathcal{F}}
\newcommand{\bi}{{\bm{\iota}}}
\newcommand{\cS}{\mathcal{S}}
\newcommand{\cD}{\mathcal{D}}
\newcommand{\cU}{\mathcal{U}}
\newcommand{\bD}{\mathbf{D}}
\newcommand{\R}{\mathbb{R}}
\newcommand{\sG}{\mathscr{G}}
\newcommand{\bX}{\mathbf{X}}
\newcommand{\card}[1]{\lvert #1\rvert}
\newcommand{\pr}[1]{\mathbb{P}\left[#1\right]}
\newcommand{\sst}[2]{\left\{#1 : #2\right\}}
\newcommand{\pth}[1]{\left(#1\right)}
\newcommand{\ceil}[1]{\left \lceil #1\right \rceil}
\newcommand{\restrict}[2]{#1_{\left| #2 \right.}}
\newcommand{\plangirth}[1]{\textsc{Planar}[\delta=2, {\rm girth}\ge #1]}
\newcommand{\link}{\text{--}}
\renewcommand{\phi}{\varphi}
\newtheorem{thm}{Theorem}
\newtheorem{prop}[thm]{Proposition}
\newtheorem{cor}[thm]{Corollary}
\newtheorem{fact}[thm]{Fact}
\newtheorem{conj}[thm]{Conjecture}
\newtheorem{lemma}[thm]{Lemma}
\newtheorem{claim}{Claim}[thm]
\newtheorem{problem}{Problem}
\theoremstyle{definition}
\newtheorem{defi}[thm]{Definition}
\newtheoremstyle{case}{}{}{}{}{}{:}{ }{}
\theoremstyle{case}
\theoremstyle{remark}
\newtheorem{rk}{Remark}
\newenvironment{proofclaim}[1][{\it Proof of claim. \hspace{0.066cm}}]%
	{\noindent {}{#1}{}}{ \strut\hfill $\lozenge$\vspace{2ex}}
\title{Fractional domatic number and minimum degree}
\author[1]{Quentin Chuet}
\author[2]{Hugo Demaret}
\author[1]{Hoang La}
\author[1]{François Pirot}
\affil[1]{LISN (Université Paris-Saclay), 91190, Gif sur Yvette, France.}
\affil[2]{GREYC (Université Caen Normandie), 14000, Caen, France.}
\begin{document}
\maketitle

\begin{abstract}
    The domatic number of a graph $G$ is the maximum number of pairwise disjoint dominating sets of $G$. We are interested in the LP-relaxation of this parameter, which is called the fractional domatic number of $G$. We study its extremal value in the class of graphs of minimum degree $d$. The fractional domatic number of a graph of minimum degree $d$ is always at most $d+1$, and at least $(1-o(1))\, d/\ln d$ as $d\to \infty$. This is asymptotically tight even within the class of split graphs.
    Our main result concerns the case $d=2$; we show that, excluding $8$ exceptional graphs, the fractional domatic number of every connected graph of minimum degree (at least) $2$ is at least $5/2$. We also show that this bound cannot be improved if only finitely many graphs are excluded, even when restricting to bipartite graphs of girth at least $6$. This proves in a stronger sense a conjecture by Gadouleau, Harms, Mertzios, and Zamaraev (2024). This also extends and generalises results from McCuaig and Shepherd (1989), from Fujita, Kameda, and Yamashita (2000), and from Abbas, Egerstedt, Liu, Thomas, and Whalen (2016).
    Finally, we show that planar graphs of minimum degree at least $2$ and girth at least $g$ have fractional domatic number at least $3 - O(1/g)$ as $g\to\infty$.
\end{abstract}

\section{Introduction}
Given a graph $G$, a \emph{dominating set} of $G$ is a set $X\subseteq V(G)$ such that $N[X]=V(G)$. Dominating sets are often used to model monitoring problems in networks. A classical problem in Graph Theory is to find the minimum size $\gamma(G)$ of a dominating set of $G$, called the \emph{domination number} of $G$~\cite{McSh89,MaTa96}. A possible approach to this problem is to study a stronger parameter, the \emph{domatic number} of $G$, denoted $\dom(G)$, which is the 
maximum number of pairwise disjoint dominating sets of $G$. Since dominating sets are stable through vertex addition, $\dom(G)$ can equivalently be defined as the maximum size of a partition of $V(G)$ into dominating sets. 
Such a partition is usually called a \emph{domatic partition of $G$}, and can also be captured by the notion of \emph{dominating $k$-colouring} of $G$, that is a mapping $\phi\colon V(G) \to [k]$ such that $\phi(N[v])=[k]$ for every vertex $v\in V(G)$ --- in other words, every closed neighbourhood spans all the colours in $\phi$. Then $\dom(G)$ is the maximum $k$ such that $G$ has a dominating $k$-colouring. 
A straightforward application of the Pigeonhole Principle yields that the minimum colour class in a dominating $k$-colouring of $G$ has size at most $|V(G)|/k$, hence $\gamma(G) \le \card{V(G)}/\dom(G)$. As a consequence, any lower bound on $\dom(G)$ yields an upper bound on $\gamma(G)$.
For instance, Matheson and Tarjan \cite{MaTa96} proved that every $n$-vertex triangulated disc (i.e. a $2$-connected planar graph, all internal faces of which are triangles) has domination number at most $n/3$, by showing that its domatic number is at least $3$.
A parallel can be made with independent sets and the chromatic number of graphs. Remarkably, there is no known proof for the fact that every $n$-vertex planar graph contains an independent set of size at least $n/4$ that does not rely on the \textsc{$4$-Colour Theorem}.

We are interested in the LP-relaxation of the domatic number, called the \emph{fractional domatic number}, which provides a tighter bound on the domination number. This parameter was first formally introduced in \cite{suomela2006locality}, although some related notions already appeared in \cite{FKY00}. It has many equivalent definitions; we mention some of them hereafter.

\paragraph{Linear Programming}
We denote $\cD(G)$ the collection of dominating sets of a given graph $G$. Then the fractional domatic number of $G$, denoted $\fdom(G)$, is the solution to the following linear program:

\begin{equation}
    \label{eq:LP-fdom}
    \begin{aligned}
    \mbox{Maximise}& \sum_{D \in \mathcal{D}(G)} x_D,\\
    \mbox{Subject to}& \left\{\begin{array}{rl}\displaystyle
        \sum_{\substack{D \in \cD(G)\\ v\in D}} x_D \le 1& \mbox{for all $v \in V(G)$;}\\
        x_D \ge 0 & \mbox{for all $D\in \cD(G)$.}
    \end{array} \right.
    \end{aligned}
\end{equation}

\paragraph{Probability distribution}
If $(x_D)_{D\in \cD(G)}$ yields a solution to \eqref{eq:LP-fdom}, then observe that a renormalisation of the weights $x_D$ yields a probability distribution over $\cD(G)$, in such a way that a random dominating set $\bD$ drawn according to that distribution satisfies $\pr{v\in \bD}\le \frac{1}{\fdom(G)}$ for every vertex $v\in V(G)$. 
So the fractional domatic number of $G$ can alternatively be defined as 
\begin{equation}
    \label{eq:fdom-proba}
    \fdom(G) = \max \sst{\frac{1}{p}}{\begin{array}{c}
        \mbox{there is a random dominating set $\bD$ of $G$ such that}\\
        \pr{v\in \bD}\le p \mbox{ for every vertex $v\in V(G)$.}
    \end{array}}.
\end{equation}

\paragraph{Dominating sets with bounded overlaps}
The theory of Linear Programming tells us that the solution to \eqref{eq:LP-fdom} is a rational number. So there is an integer $q$ such that $q\cdot x_D$ is an integer for every $D\in \cD(G)$. We construct a multiset $\cF$ of dominating sets of $G$ by adding $q\cdot x_D$ copies of $D$ to $\cF$ for every $D\in \cD(G)$. By doing so, every vertex $v\in V(G)$ appears in at most $q$ dominating sets of $\cF$; we call that number of occurrences the \emph{multiplicity of $v$ in $\cF$}.
Letting $m(\cF)$ be the maximum multiplicity of a vertex in $\cF$, we obtain that 
\begin{equation}
\label{eq:fdom-family}
    \fdom(G) = \max\, \frac{\cF}{m(\cF)},
\end{equation}
where the maximum is taken over every multiset $\cF$ of dominating sets of $G$. 

\paragraph{Dominating $(p:q)$-colourings}
If a given vertex $v\in V(G)$ appears in fewer than $m(\cF)$ dominating sets of $\cF$, we may add it to some extra ones so that its multiplicity in $\cF$ is exactly $m(\cF)$. This ensures that the maximum in \eqref{eq:fdom-family} is attained by a multiset $\cF$ in which every vertex has equal multiplicity. We represent this with a \emph{dominating $(p:q)$-colouring} of $G$, that is a mapping $\phi\colon V(G) \to \binom{[p]}{q}$ such that $\bigcup_{u\in N[v]} \phi(u) = [p]$ for every vertex $v\in V(G)$ --- in other words, every closed neighbourhood spans all the colours in $\phi$. This is illustrated in  \Cref{fig:introduction:fractionaldomatic:graphs}. When $p$ and $q$ are not explicit, we say that $\phi$ is a \emph{fractional dominating colouring} of $G$. Then we have
\begin{equation}
    \label{eq:fdom-colouring}
    \fdom(G) = \max \sst{\frac{p}{q}}{\mbox{there is a dominating $(p:q)$-colouring of $G$.}}.
\end{equation}

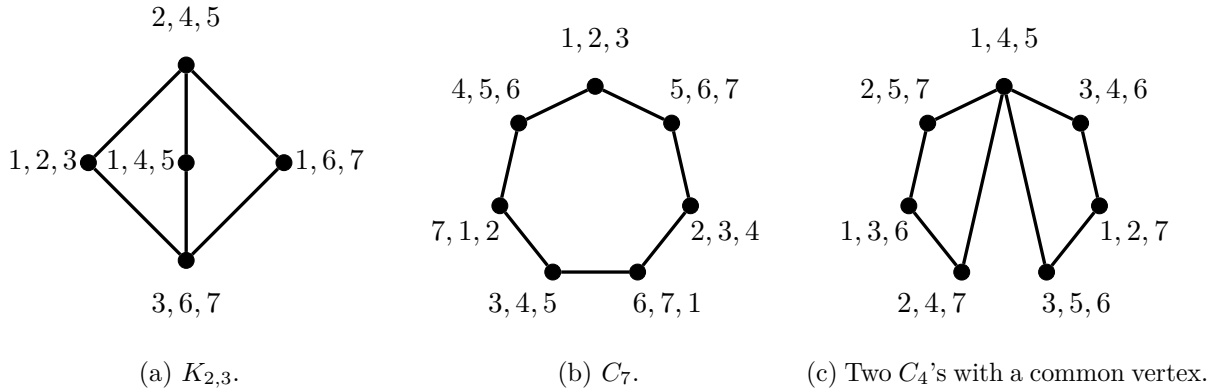
\begin{figure}[!htbp]
\centering
\begin{subfigure}[b]{0.32\textwidth}
    \centering
    \begin{tikzpicture}[baseline=0pt, every node/.style={shape=circle,draw=black,fill,inner sep=0pt,minimum size=6pt}, every edge/.style={line width=1.25pt,draw,black}]
            \def\R{1.3}
            \coordinate[label=right:{$1,6,7$}, circle] (A) at (0 : \R);
            \coordinate[ label={$2,4,5$}, circle] (B) at (90 : \R);
            \coordinate[label=left:{$1,2,3$}, circle] (C) at (180 : \R);
            \coordinate[label=below:{$3,6,7$}, circle] (D) at (270 : \R);
            \coordinate[label=left:{$1,4,5$}, circle] (O) at (0, 0);
            \draw (A) edge (B);
            \draw (B) edge (C);
            \draw (C) edge (D);
            \draw (D) edge (A);
            \draw (D) edge (O);
            \draw (O) edge (B);
\end{tikzpicture}
    \caption{$K_{2,3}$.}
    \label{fig:introduction:K23}
\end{subfigure}
\begin{subfigure}[b]{0.32\textwidth}
    \centering
\begin{tikzpicture}[baseline=0pt, every node/.style={shape=circle,draw=black,fill,inner sep=0pt,minimum size=6pt}, every edge/.style={line width=1.25pt,draw,black}]

    \def\n{7}
    \def\R{1.3} 
    \deflabel{1}{$1,2,3$}
    \deflabel{2}{$4,5,6$}
    \deflabel{3}{$7,1,2$}
    \deflabel{4}{$3,4,5$}
    \deflabel{5}{$6,7,1$}
    \deflabel{6}{$2,3,4$}
    \deflabel{7}{$5,6,7$}

    \foreach \i in {1,...,\n} {
        \pgfmathsetmacro{\angle}{90+360/\n*(\i-1)}
        
        \coordinate[label = \angle : \lab{\i}, circle] (x\i) at ({\angle}:\R);
    }
     \foreach \i in {1,...,\n} {
        \pgfmathtruncatemacro{\next}{mod(\i,\n)+1}
        \draw (x\i) edge (x\next);
    }
    \end{tikzpicture}
    \caption{$C_7$.}
    \label{fig:introduction:C7}
\end{subfigure}
\begin{subfigure}[b]{0.32\textwidth}
    \centering
    \begin{tikzpicture}[every node/.style={shape=circle,draw=black,fill,inner sep=0pt,minimum size=6pt}, every edge/.style={line width=1.25pt,draw,black}]

    \def\n{7}
    \def\R{1.3}
    \deflabel{1}{$3,5,6$}
    \deflabel{2}{$1,2,7$}
    \deflabel{3}{$3,4,6$}
    \deflabel{4}{$1,4,5$}
    \deflabel{5}{$2,5,7$}
    \deflabel{6}{$1,3,6$}
    \deflabel{7}{$2,4,7$}
    
    \foreach \i in {1,...,\n} {
        \pgfmathsetmacro{\angle}{90+360/\n*(\i-4)}    
        \coordinate[label = \angle : \lab{\i}, circle] (x\i) at ({\angle}:\R);
    }
    \foreach \i in {1,...,6} {
        \pgfmathtruncatemacro{\next}{mod(\i,\n)+1}
        \draw (x\i) edge (x\next);
    }

    \draw (x1) edge (x4);
    \draw (x4) edge (x7);
                
\end{tikzpicture}
    \caption{Two $C_4$'s with a common vertex.}
    \label{fig:introduction:2C4}
\end{subfigure}
\caption{Examples of dominating $(7:3)$-colourings. Each number represents a dominating set.}
\label{fig:introduction:fractionaldomatic:graphs}
\end{figure}

\subsection*{Domatic number and chromatic number}
The study of the domatic number is analogous to that of the chromatic number.
The chromatic number of a graph $G$, denoted $\chi(G)$, is the minimum size of a partition of $V(G)$ into independent sets. The fractional chromatic number $\chi_f(G)$ corresponds to the LP-relaxation of the chromatic number. As for $\fdom(G)$, there are many equivalent definitions of $\chi_f(G)$; let us mention that related to colourings. Given integers $p\ge q$, a \emph{proper $(p:q)$-colouring} of $G$ is a mapping $\phi\colon V(G) \to \binom{[p]}{q}$ such that $\phi(u)\cap \phi(v) = \emptyset$ for every edge $uv\in E(G)$. When $p$ and $q$ are not explicit, we say that $\phi$ is a \emph{fractional proper colouring} of $G$.
Then $\chi_f(G)$ is the minimum $p/q$ such that there is a proper $(p:q)$-colouring of $G$. 
Said otherwise, each vertex is assigned a set of $q$ colours among $p$ different ones in total, and two adjacent vertices must have no colour in common. For example, \Cref{fig:introduction:C7} is also a fractional proper colouring of $C_7$, since each colour class forms an independent set --- this is not the case in \Cref{fig:introduction:K23}.

There are many similarities between the chromatic number and the domatic number, as well as between their fractional counterparts. Typically, we have the following well-known (and relatively easy to derive) sequence of inequalities for the chromatic parameters of a graph $G$ on $n$ vertices:

\begin{equation}
    \label{eq:chromatic-inequalities}
   \max \left\{\omega(G) , \frac{n}{\alpha(G)}\right\} \le \chi_f(G) \le \chi(G) \le \Delta(G)+1
\end{equation}
where $\omega(G)$, $\alpha(G)$, and $\Delta(G)$ denote the clique number, the independence number, and the maximum degree of $G$ respectively.

Interestingly, there is a sequence of inequalities for the domatic parameters of a graph $G$ on $n$ vertices mirroring \eqref{eq:chromatic-inequalities}.
\begin{equation}
    \label{eq:domatic-inequalities}
    (1-o(1))\frac{\delta(G)}{\ln \Delta(G)} \le \dom(G) \le \fdom(G) \le \min \left\{\delta(G)+1, \frac{n}{\gamma(G)}\right\}.
\end{equation}
The left-most lower bound is from \cite{FHKS02}, and the right-most upper bounds can be found in \cite{GoHe20}.
Let us provide some insight on the similarities between \eqref{eq:chromatic-inequalities} and \eqref{eq:domatic-inequalities}.

The bound $\chi(G)\le \Delta(G)+1$ comes from a simple algorithm that extends a partial proper colouring of a graph greedily, introducing a new colour each time it is needed. This is possible because proper colourings are hereditary: a proper colouring of a graph induces a proper colouring of any of its subgraphs. This is not the case for dominating colourings, so there is no similar bound in the sequence of inequalities associated with $\dom(G)$. Instead, lower bounds on $\dom(G)$ that depend on $\delta(G)$ and $\Delta(G)$ can be derived with the probabilistic method \cite{FHKS02}.

A proper colouring of $G$ is a partition of $V(G)$ into independent sets, all of which have size at most $\alpha(G)$. From that observation, we infer the bound $\chi(G) \ge n/\alpha(G)$. Similarly, a dominating colouring of $G$ is a partition of $V(G)$ into dominating sets, all of which have size at least $\gamma(G)$. Hence, $\gamma(G)$ is to $\dom(G)$ what $\alpha(G)$ is to $\chi(G)$, and we derive the bound $\dom(G) \le n/\gamma(G)$. 

We now delve into the relationship between $\delta(G)$ and $\dom(G)$ that has similarities and differences with that between $\chi(G)$ and $\omega(G)$.
In a proper colouring, every colour must appear at most once in any clique. Similarly, in a dominating colouring, every colour must appear at least once in any closed neighbourhood. This explains why the relation between $\delta(G)+1$ and $\dom(G)$ shares similarities with that between $\omega(G)$ and $\chi(G)$. 
The latter has been extensively studied after it has been observed that there exist triangle-free graphs $G$ (with $\omega(G)=2$) with arbitrarily large chromatic number (and even arbitrarily small independence ratio, see~\cite{Bol81}).
This motivated the study of classes of graphs where, on the contrary, the chromatic number is bounded as a function of the clique number. For a given class of graph $\mathcal{G}$, we say that $\mathcal{G}$ is $\chi$-bounded --- respectively $\chi_f$-bounded --- if there exists a function $f$ such that $\chi(G)\le f(\omega(G))$ --- respectively $\chi_f(G)\le f(\omega(G))$ --- for every graph $G\in\mathcal{G}$. 
As previously discussed, the class of all graphs is neither $\chi$-bounded, nor $\chi_f$-bounded. 
The studies of simple sufficient conditions for and obstructions to $\chi$-boundedness have been an extensive research area of Chromatic Graph Theory; we refer the reader to a survey by Scott and Seymour~\cite{ScottSeymour20} to learn more about this. 

The same notion can be defined for $\dom$ and $\fdom$ thanks to their relation to minimum degree. More formally, we say that a class $\mathcal{G}$ of graphs is $\dom$-bounded --- respectively $\fdom$-bounded --- if and only if there exists a function $f$ such that $\delta(G)\le f(\dom(G))$  --- respectively $\delta(G)\le f(\fdom(G))$ --- for every graph $G\in \mathcal{G}$.

While the class of all graphs is not $\dom$-bounded due to \cite{Zel83}, \Cref{prop:asymptotic-bound} implies that it is $\fdom$-bounded.

\subsection*{Extremal value of the fractional domatic number}


We first provide the following bounds on $\fdom(G)$ with respect to the minimum degree of $G$. The upper bound already appears in \cite{GoHe20}; we provide an alternative proof in \Cref{prop:d+1} as an illustration of our method to derive upper bounds on $\fdom(G)$.
The lower bound can be deduced from the probabilistic definition of the fractional domatic number together with the proof of \cite[Theorem 1.2.2]{AlSp90}; we include the proof for completeness.

\begin{prop}
    
    \label{prop:asymptotic-bound}
    For every graph $G$ of minimum degree $d$, 
\[
\frac{d+1}{1+\ln(d+1)} \le \fdom(G) \le d+1. 
\]

\end{prop}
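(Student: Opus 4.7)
The plan is to prove the two bounds separately, each using a different characterisation of $\fdom(G)$ from the introduction.

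For the upper bound $\fdom(G) \le d+1$, I would rely on the dominating $(p:q)$-colouring formulation~\eqref{eq:fdom-colouring}. Fix any vertex $v$ of degree $d$ and any dominating $(p:q)$-colouring $\phi$ of $G$. By definition, $\bigcup_{u \in N[v]} \phi(u) = [p]$, so the $d+1$ sets $\phi(u)$ of size $q$ must together cover $[p]$; hence $p \le (d+1)q$, i.e.\ $p/q \le d+1$. Taking the supremum over all dominating fractional colourings yields the upper bound.

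For the lower bound, I would use the probabilistic characterisation~\eqref{eq:fdom-proba}, adapting the classical Alon--Spencer probabilistic argument that $\gamma(G) \le n(1+\ln(d+1))/(d+1)$. Set $p_0 := \ln(d+1)/(d+1)$, sample $X \subseteq V(G)$ by including each vertex independently with probability $p_0$, and let $Y := \{v \in V(G) : N[v] \cap X = \emptyset\}$ be the set of vertices missed by $X$. Then $\bD := X \cup Y$ is automatically a dominating set, since every vertex outside $\bD$ has a neighbour in $X$. For any vertex $v$ with $d(v) \ge d$, independence gives
\[
\pr{v \in \bD} = p_0 + (1-p_0)\,\pr{N[v] \cap X = \emptyset \mid v \notin X} = p_0 + (1-p_0)^{d(v)+1} \le p_0 + (1-p_0)^{d+1}.
\]
Using $(1-p_0)^{d+1} \le e^{-p_0(d+1)} = 1/(d+1)$ the right-hand side is at most $(1+\ln(d+1))/(d+1)$, and~\eqref{eq:fdom-proba} closes the argument.

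There is no real obstacle: both inequalities fit directly into the frameworks laid out in the introduction, and the choice of $p_0$ is the unique value minimising $p_0 + e^{-p_0(d+1)}$. The only subtle point worth flagging is that the Alon--Spencer argument in its usual form controls the expected cardinality $\mathbb{E}[|\bD|]$ (which yields a bound on $\gamma(G)$), whereas here we instead need the marginal probability $\pr{v \in \bD}$ to be uniformly small in $v$; the fact that the random construction is fully symmetric makes this transition essentially automatic.
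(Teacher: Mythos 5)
Your proposal is correct. The lower bound argument is essentially identical to the paper's: the same random set $\mathbf{X}$ with inclusion probability $p_0=\ln(d+1)/(d+1)$, the same augmentation by the undominated vertices, the same estimate $\pr{v\in\bD}\le p_0+(1-p_0)^{d+1}\le (1+\ln(d+1))/(d+1)$, concluded via the probabilistic characterisation \eqref{eq:fdom-proba}; your remark that one needs a uniform bound on the marginals $\pr{v\in\bD}$ rather than on $\mathbb{E}[|\bD|]$ is exactly the (minor) adaptation the paper also makes to the Alon--Spencer argument. For the upper bound the routes diverge: you argue directly from \eqref{eq:fdom-colouring} that in any dominating $(p:q)$-colouring the $d+1$ colour sets on $N[v]$ must cover $[p]$, giving $p\le (d+1)q$, whereas the paper (in \Cref{prop:d+1}) exhibits the indicator of $N[v]$ as a fractional bottleneck of weight $d+1$ and invokes LP duality. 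The two are equally short here; your counting argument is more elementary and closer in spirit to the proof the paper attributes to \cite{GoHe20}, while the paper's duality certificate is chosen deliberately as a warm-up for the more delicate bottleneck constructions in \Cref{prop:tight-fdom} and \Cref{prop:5/$2$-hammock}, where no comparably simple counting argument is available.
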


\begin{proof}[Proof of the lower bound]
    The following proof is similar to that of \cite[Theorem 1.2.2]{AlSp90}.
    Let $G$ be a graph of minimum degree $d \ge 0$. We construct a random set $\mathbf{X}$ by taking each vertex $v \in V(G)$ independently with probability $p$, for some $p \in [0,1]$. Let $\bD$ be the union of $\mathbf{X}$ and the set of vertices $v \in V(G)$ not dominated by $\mathbf{X}$. By construction, $\bD$ is a random dominating set of $G$. Now, given a vertex $v \in V(G)$, we have
    \[
    \pr{N[v] \cap \mathbf{X} = \emptyset} = \left( 1-p\right)^{\deg(v)+1} \le \left( 1-p\right)^{d+1}.
    \]
    Thus,
    \[
    \pr{v \in \bD} \le p + \left(1-p\right)^{d+1}.
    \]
    By letting $p \coloneqq \frac{\ln(d+1)}{d+1}$, we have
    \begin{align*}
        \pr{v \in \bD} &\le \frac{\ln(d+1)}{d+1} + \left( 1 - \frac{\ln(d+1)}{d+1}\right)^{d+1}\\ 
        &\le \frac{\ln(d+1)}{d+1} + \exp{\left(\frac{-\ln(d+1)}{d+1}\right)^{d+1}} &\mbox{since for every $x\in \R$, } 1+x \le \exp(x)\\
        &\le \frac{1 + \ln(d+1)}{d+1}.
    \end{align*}
    From \Cref{eq:fdom-proba}, the result follows.
\end{proof}

Since $\dom(G) \le \fdom(G)$ for every graph $G$, the class of graphs that reach the upper bound in \Cref{prop:asymptotic-bound} contains the \emph{domatically full graphs}, i.e. the class of graphs $G$ such that $\dom(G)=\delta(G)+1$. A handful of graph classes have been shown to be domatically full, such as maximal outerplanar graphs \cite{CoHe77}, interval graphs \cite{LHC90}, and more generally strongly chordal graphs \cite{Far84} --- a graph $G$ is \emph{strongly chordal} if $G$ is chordal and moreover every even cycle of $G$ of length at least $6$ has a chord that cuts it into two even cycles.

The lower bound in \Cref{prop:asymptotic-bound} is asymptotically tight: 
in~\cite{Alon1990}, Alon studies the transversal number of random uniform hypergraphs, from which we infer the existence of graphs $G$ of arbitrarily large minimum degree $d$ such that $\fdom(G) = (1+o(1))d/\ln d$. We provide in \Cref{prop:tight-fdom} an explicit construction of a bipartite graph that reaches this asymptotic lower bound --- this construction can be modified so that it becomes a split graph.

\paragraph{Graphs of minimum degree \texorpdfstring{$2$}{2}}
\Cref{prop:asymptotic-bound} is essentially best possible when $d$ is large, yet many questions remain open when $d$ is small. If $G$ has an isolated vertex, then clearly $\fdom(G)=1$. Otherwise, $G$ has minimum degree at least $1$, and $\fdom(G)\ge\dom(G)\ge 2$, as can be observed by taking a maximal independent set and its complement. Recently, Gadouleau, Harms, and Mertzios~\cite{GHMZ24} gave a simple characterization of every connected graph $G$ such that $\fdom(G)=2$: a connected graph $G$ has $\fdom(G)=2$ if and only if $\delta(G)<2$ or $G=C_4$; otherwise, they conjectured that $\fdom(G)\ge \frac{7}{3}$.
This is true when restricting to (induced)$K_{1,6}$-free graphs, which is a consequence of a result of Abbas, Egerstedt, Liu, Thomas, and Whalen~\cite{AELTW16}. They proved that, except for the family $\B$ of eight graphs depicted in \Cref{fig:badgraphs}, every connected graph of minimum degree at least $2$ has a dominating $(5:2)$-colouring. Indeed, every graph in $\B\setminus \{C_4\}$ has a dominating $(7:3)$-colouring; this is depicted in \Cref{fig:introduction:fractionaldomatic:graphs} --- we note that every graph $G\in \B$ that is not depicted in \Cref{fig:introduction:fractionaldomatic:graphs} contains $C_7$ as a spanning subgraph, so any dominating $(7:3)$-colouring of $C_7$ is also a dominating $(7:3)$-colouring of $G$.
It is possible to drop the $K_{1,6}$-free hypothesis if one desires only a bound on the domination number rather than the fractional domatic number: McCuaig and Shepherd~\cite{McSh89} showed that every $n$-vertex connected graph of minimum degree at least $2$ not in $\B\setminus \{K_{2,3}\}$ has a dominating set of size at most $2n/5$. 
Focusing on regular graphs, Fujita, Yamashita, and Kameda~\cite{FKY00} showed that every cubic graph $G$ (every vertex of $G$ has degree 3) has a dominating $(5:2)$-colouring.

Our main contribution is to show a common strengthening of all the results stated above.
\begin{thm}\label{thm:5/2}
    Let $\B$ be the family of graphs depicted in \Cref{fig:badgraphs}. For every connected graph $G\notin \B$ of minimum degree at least $2$, it holds that
    \[ \fdom(G) \ge \frac{5}{2}.\]
\end{thm}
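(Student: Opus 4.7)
The plan is to argue by induction on $|V(G)|$ (equivalently, by considering a vertex-minimum counterexample $G$: connected, $\delta(G)\ge 2$, $G\notin \B$, with no dominating $(5:2)$-colouring). The first reduction is to invoke the theorem of Abbas, Egerstedt, Liu, Thomas, and Whalen for $K_{1,6}$-free graphs, which allows us to assume that some vertex $v\in V(G)$ has six pairwise non-adjacent neighbours $u_1,\dots,u_6$. The broad goal is then to locate a reducible subset $H\subseteq V(G)$ such that $G':=G-H$ (or an appropriate modification of it) still satisfies the induction hypothesis (connected, $\delta\ge 2$, not in $\B$), and such that any dominating $(5:2)$-colouring of $G'$ extends to one of $G$.

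The key observation for the extension step is combinatorial: a dominating $(5:2)$-colouring assigns each vertex a $2$-subset of $[5]$ chosen from $\binom{5}{2}=10$ possible palettes, and the constraint at a vertex $x$ reads $\bigcup_{y\in N_G[x]}\phi(y) = [5]$. Three well-chosen palettes already cover $[5]$, so a vertex of degree $\ge 2$ has just enough room, while each neighbour with a prescribed colour eliminates only a few of the $10$ candidate palettes at its neighbours. Hence, if $H$ is small and its deleted vertices have several neighbours in $G'$, a pigeonhole argument on $\binom{[5]}{2}$ typically leaves a palette available to extend $\phi$. The natural candidates for $H$ are: a single neighbour $u_i$ of $v$ of sufficiently high degree; the vertex $v$ itself together with selected pendants; or a short ear of a $2$-connected block of $G$.

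The bulk of the proof, and the main obstacle, will be the boundary case analysis which must simultaneously guarantee that $G-H$ is connected with $\delta\ge 2$, avoids the bad family $\B$, and leaves enough colour flexibility at the boundary. Since every graph in $\B$ has at most seven vertices, membership in $\B$ after reduction is automatic whenever $G$ is large enough, reducing that concern to a finite verification. Maintaining $\delta\ge 2$ after reduction may force the suppression of entire degree-$2$ paths rather than single vertices, which dovetails with an ear decomposition of the $2$-connected blocks of $G$. The most delicate step, and the main obstacle I anticipate, is to show that whenever the naive extension fails at the boundary, a \emph{local} recolouring (swapping colour classes on a small connected region of $G'$) can free the required $2$-subset. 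Finally, one expects the excluded family $\B$ to surface precisely as the obstructions to these reductions, thereby confirming that no further exceptions are missed.
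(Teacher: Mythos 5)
There is a fundamental obstruction to your plan: you propose to induct on the existence of a dominating $(5:2)$-colouring and to extend such a colouring across the boundary of a small deleted set $H$ by a pigeonhole argument on the $10$ palettes in $\binom{[5]}{2}$. But the statement $\fdom(G)\ge 5/2$ is \emph{not} equivalent to the existence of a dominating $(5:2)$-colouring, and the induction hypothesis you need is false for many graphs satisfying the theorem's hypotheses. Concretely, \Cref{prop:pqnotenough} (applied with $q_0=2$, $d=2$, $n\ge 11$) shows that the subdivision $H_{n,2}$ of $K_n$ --- a connected graph of minimum degree $2$ not in $\B$ --- admits no dominating $(p:q)$-colouring with $q\le 2$ and $p>2q$, hence no $(5:2)$-colouring, even though the theorem guarantees $\fdom(H_{n,2})\ge 5/2$. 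The discussion in \Cref{sec:conclusion} sharpens this: the graph obtained from $K_n$ by replacing each edge with a hammock has $\fdom=5/2$ yet any dominating $(5k:2k)$-colouring of it requires $k\ge n/5$. So at some point your induction will hand you a graph $G'$ that simply has no $(5:2)$-colouring to extend, and no local recolouring over a fixed palette of $10$ two-subsets can repair this; the number of colours genuinely must grow with the graph. Relatedly, the heuristic that ``a vertex of degree $\ge 2$ has just enough room'' points the wrong way: at a degree-$2$ vertex $x$ with neighbours $y,z$ the constraint $\phi(x)\cup\phi(y)\cup\phi(z)=[5]$ forces the three pairwise intersections to have total size at most $1$, which is extremely rigid and is precisely why a fixed finite palette fails.

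The paper's proof avoids this by working throughout with genuinely fractional objects: an $f$-dominating $2/5$-colouring is a \emph{random} dominating set $\bD$ with $\pr{v\in\bD}=2/5$, and the combination steps (\Cref{lem:gluing} for cut-vertices, \Cref{lem:extension} for $2$-separations, \Cref{lem:strong-pathlemma} for long suspended paths) are probabilistic mixtures conditioned on boundary events, not palette matchings. The reducible configurations are also different in character from what you sketch: rather than exploiting a high-degree vertex via the $K_{1,6}$-free result of Abbas et al., the minimal counterexample is shown to be $2$-connected, to have no two adjacent $3^+$-vertices, no $C_4$, no twin suspended $3$-paths, every suspended $3$-path in a hammock, and no suspended path of length $\ge 4$; the surviving base case is a subdivision of a multigraph of minimum degree $3$, handled by an explicit randomized construction (\Cref{lem:5/2}) followed by \Cref{lem:weakdomination}. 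Your instinct that the family $\B$ is dispatched by finite verification when a reduction lands in it is correct and matches the paper, but the core colouring framework of your proposal would need to be replaced wholesale by the fractional/probabilistic one for the argument to go through.
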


      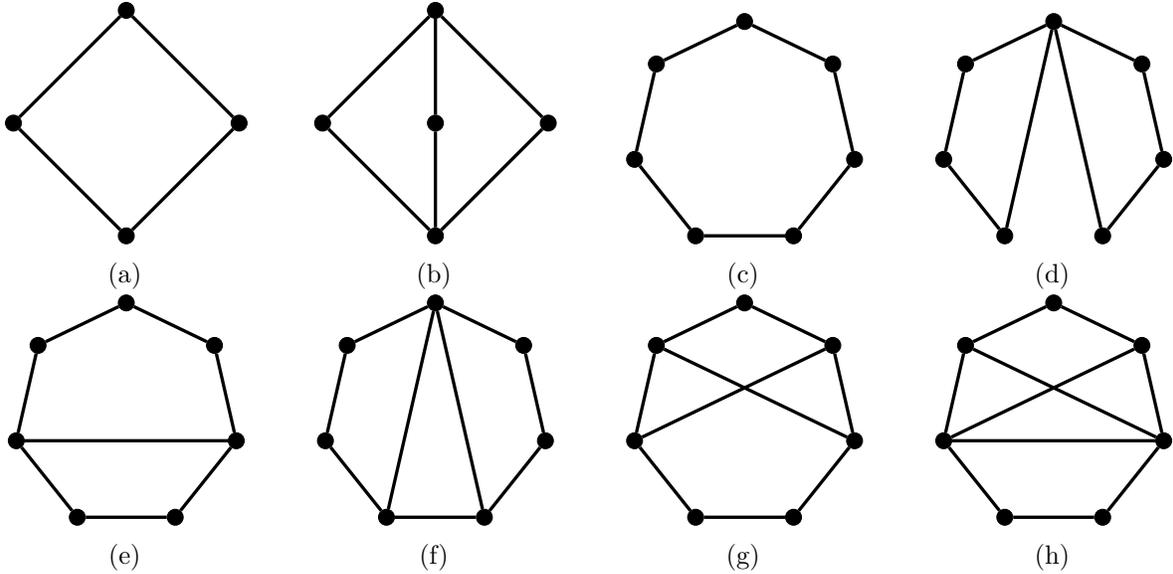
\begin{figure}[!htbp]
            \centering
            \begin{subfigure}[b]{0.24\textwidth}
            \centering
            \begin{tikzpicture}[every node/.style={shape=circle,draw=black,fill,inner sep=0pt,minimum size=6pt}, every edge/.style={line width=1.25pt,draw,black}]
                \graph[clockwise, radius=1.5cm] {subgraph C_n [V={a, b, c, d},empty nodes,name=A]  };
            \end{tikzpicture}
            \caption{}
            \label{fig:badgraphs:C4}
        \end{subfigure}
        \begin{subfigure}[b]{0.24\textwidth}
            \centering
        \begin{tikzpicture}[every node/.style={shape=circle,draw=black,fill,inner sep=0pt,minimum size=6pt}, every edge/.style={line width=1.25pt,draw,black}]
                \graph[clockwise, radius=1.5cm] {subgraph C_n [V={a, b, c, d},empty nodes,name=A]  };
                       
                \node[] (center) {};
                \draw[] (A c) edge [] (center);
                \draw[] (center) edge[] (A a);
        \end{tikzpicture}
        \caption{}
        \label{fig:badgraphs:K23}
        \end{subfigure}
        \begin{subfigure}[b]{0.24\textwidth}
            \centering
            \begin{tikzpicture}[every node/.style={shape=circle,draw=black,fill,inner sep=0pt,minimum size=6pt}, every edge/.style={line width=1.25pt,draw,black}]
                \graph[clockwise, radius=1.5cm] {subgraph C_n [V={a, b, c, d,e,f,g},empty nodes,name=A]  };
                
            \end{tikzpicture}
            \caption{}
            \label{fig:badgraphs:C7}
        \end{subfigure} 
        \begin{subfigure}[b]{0.24\textwidth}
            \centering
            \begin{tikzpicture}[every node/.style={shape=circle,draw=black,fill,inner sep=0pt,minimum size=6pt}, every edge/.style={line width=1.25pt,draw,black}]
                \graph[clockwise=7, radius=1.5cm,empty nodes,name=A] {a,b,c,d,e,f,g};
                \draw[] (A e) edge[] (A f);
                \draw[] (A f) edge[] (A g);
                \draw[] (A g) edge[] (A a);
                \draw[] (A a) edge[] (A b);
                \draw[] (A b) edge[] (A c);
                \draw[] (A c) edge[] (A d);
                \draw[] (A d) edge[] (A a);
                \draw[] (A e) edge[] (A a);
            \end{tikzpicture}
            \caption{}
            \label{fig:badgraphs:vertexgluedC4}
        \end{subfigure}
        \begin{subfigure}[b]{0.24\textwidth}
            \centering
            \begin{tikzpicture}[every node/.style={shape=circle,draw=black,fill,inner sep=0pt,minimum size=6pt}, every edge/.style={line width=1.25pt,draw,black}]
                \graph[clockwise, radius=1.5cm] {subgraph C_n [V={a, b, c, d,e,f,g},empty nodes,name=A]  };
                \draw[] (A f) edge[] (A c);
            \end{tikzpicture}
            \caption{}
            \label{fig:badgraphs:C7C4}
        \end{subfigure}
        \begin{subfigure}[b]{0.24\textwidth}
            \centering
            \begin{tikzpicture}[every node/.style={shape=circle,draw=black,fill,inner sep=0pt,minimum size=6pt}, every edge/.style={line width=1.25pt,draw,black}]
                \graph[clockwise, radius=1.5cm] {subgraph C_n [V={a, b, c, d,e,f,g},empty nodes,name=A]  };
                \draw[] (A d) edge[] (A a);
                \draw[] (A e) edge[] (A a);
            \end{tikzpicture}
            \caption{}
            \label{fig:badgraphs:C4C4edge}
        \end{subfigure}
         \begin{subfigure}[b]{0.24\textwidth}
            \centering
            \begin{tikzpicture}[every node/.style={shape=circle,draw=black,fill,inner sep=0pt,minimum size=6pt}, every edge/.style={line width=1.25pt,draw,black}]
                \graph[clockwise, radius=1.5cm] {subgraph C_n [V={a, b, c, d,e,f,g},empty nodes,name=A]  };
                \draw[] (A b) edge[] (A f);
                \draw[] (A c) edge[] (A g);
            \end{tikzpicture}
            \caption{}
            \label{fig:badgraphs:C7X}
        \end{subfigure}
        \begin{subfigure}[b]{0.24\textwidth}
            \centering
            \begin{tikzpicture}[every node/.style={shape=circle,draw=black,fill,inner sep=0pt,minimum size=6pt}, every edge/.style={line width=1.25pt,draw,black}]
                \graph[clockwise, radius=1.5cm] {subgraph C_n [V={a, b, c, d,e,f,g},empty nodes,name=A]  };
                \draw[] (A b) edge[] (A f);
                \draw[] (A c) edge[] (A g);
                \draw[] (A f) edge[] (A c);
            \end{tikzpicture}
            \caption{}
            \label{fig:badgraphs:C7XC4}
        \end{subfigure}
           
            \caption{The family $\B$ --- every graph $G\in \mathcal{B}$ has $\fdom(G)<5/2$.}
            \label{fig:badgraphs}
        \end{figure}

As explained above, this implies that every connected graph $G\neq C_4$ of minimum degree at least $2$ has $\fdom(G) \ge 7/3$.
The value $5/2$ is best possible: one obtains a graph $G'$ with $\fdom(G')\le 5/2$ by \emph{gluing} a $C_5$ to any graph $G$ (see \Cref{prop:5/$2$-infinite-family}). 
And even if one restricts to bipartite graphs of girth $6$, it is possible to construct an infinite family of graphs whose fractional domatic number approaches $5/2$ (see \Cref{prop:bipartite-girth6}).
It is not possible to replace the bound on $\fdom$ with an explicit dominating $(p:q)$-colouring, where $p$ and $q$ are absolute constants that satisfy $p/q=5/2$, or even $p/q>2$ (see \Cref{prop:pqnotenough}).

\paragraph{Planar graphs of large girth}
One of the key ingredients in our proof of \Cref{thm:5/2} is to show how to extend a fractional dominating colouring to a long enough \emph{suspended path} --- a maximal path with internal vertices of degree $2$. 
This strategy lets us derive a lower bound on the fractional domatic number of planar graphs of minimum degree $2$ that approaches the theoretical upper bound $3$ as their girth increases. 

\begin{thm}\label{thm:planar}
    If $G$ is planar graph of minimum degree at least $2$ and girth at least $g$, then
    \[
    \fdom(G) = 3 - O(1/g) \quad \mbox{as $g \to \infty$.}
    \]
\end{thm}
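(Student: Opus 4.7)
The upper bound $\fdom(G) \le 3$ is immediate from \Cref{prop:asymptotic-bound}, so only the lower bound $\fdom(G) \ge 3 - O(1/g)$ remains. The plan is to exploit the structural consequences of large girth in planar graphs and to build a fractional dominating colouring along the suspended paths of $G$, re-using the extension technique underlying the proof of \Cref{thm:5/2}.

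First I would extract structure via Euler's formula. Let $H$ be the planar multigraph obtained from $G$ by suppressing all degree-$2$ vertices, so that each edge of $H$ corresponds to a suspended path of $G$. If $G$ has no vertex of degree at least $3$, then $G$ is a disjoint union of cycles of length at least $g$, and a direct periodic construction gives $\fdom(C_n) \ge n/\lceil n/3\rceil \ge 3 - O(1/g)$. Otherwise $H$ has minimum degree at least $3$ and every face of $H$ corresponds to a face of $G$ of length at least $g$. Combining $2|E(H)| \ge 3|V(H)|$, $2|E(G)| \ge g \cdot |F(H)|$, and Euler's formula $|V(H)| - |E(H)| + |F(H)| = 2$ yields $|E(H)| \le 6|E(G)|/g$; in particular, the average length of a suspended path is $\Omega(g)$, and the number of high-degree vertices is a $O(1/g)$ fraction of $|V(G)|$.

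Second, I would build a dominating $(p:q)$-colouring with $p/q \ge 3 - O(1/g)$. Take $q$ large, $p$ slightly below $3q$, and partition $[p]$ into three classes $A_0, A_1, A_2$ of size close to $q$ sharing a small common overlap of size $O(q/g)$; this ``slack'' is the budget I can spend to absorb constraints near $V(H)$. Along each long suspended path $v_0, v_1, \ldots, v_\ell$, I would use the shifted periodic colouring $\phi(v_i) = A_{(i+\sigma_P) \bmod 3}$ for an offset $\sigma_P \in \{0,1,2\}$ chosen per path. The closed-neighbourhood condition is then automatic at interior vertices of the suspended path; at each endpoint $w \in V(H)$ it reduces to the requirement that the offsets of the paths incident to $w$, viewed from $w$'s side, cover $\{0,1,2\}$ modulo $3$, so that the neighbours of $w$ already span the three classes. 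This reduces to a $\mathbb{Z}/3$-valued orientation/flow problem on $H$, whose solvability I would establish using planarity and $\delta(H) \ge 3$. Suspended paths that are too short to accommodate the periodic pattern (at most $O(|E(G)|/g)$ of them by Step 1) would be contracted into $V(H)$ and their cost absorbed by the slack.

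The main obstacle lies in the combinatorial consistency of the second step: the offsets $\sigma_P$ at the two endpoints of a suspended path of length $\ell$ are coupled modulo $3$ by the value of $\ell$, so the $\mathbb{Z}/3$ system on $H$ will typically contain some infeasible constraints. The idea is to pay for each such violation by widening the relevant $A_i$ with elements from the slack so that the offending closed neighbourhood still spans $[p]$, and then to bound the total cost using the sparsity $|E(H)| = O(|E(G)|/g)$ of the reduced multigraph --- this is exactly what produces the $O(1/g)$ loss in the final ratio, and is where planarity and the girth hypothesis enter a second time.
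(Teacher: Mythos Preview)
Your route differs substantially from the paper's, and the second step has a real gap.

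The paper does not colour globally; it argues by induction on $|E(G)|$. After reducing to $2$-connected $G$, it shows (\Cref{lem:suspended-paths-planar}) that if $G$ has girth at least $5\ell+1$ and is not a cycle, then it contains \emph{one} suspended path of length exceeding $\ell$: contracting every suspended path yields a planar multigraph $G'$ of minimum degree $\ge 3$, which by Euler's formula contains a cycle of length at most $5$; its preimage in $G$ has length $>5\ell$, so one of its at most five constituent suspended paths is long by pigeonhole. Then (\Cref{thm:planar-precise}) the inner vertices of that single path are removed, the induction hypothesis is applied to the remainder (still planar, still $\delta\ge 2$, same girth bound), and the colouring is extended back across the path via the already-available \Cref{lem:strong-pathlemma}. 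The base case is a single long cycle. No simultaneous coordination of offsets is ever required.

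In your scheme, by contrast, you must actually \emph{solve} the $\mathbb{Z}/3$ system, because the slack cannot absorb a local failure: if all offsets seen from some $w\in V(H)$ coincide, then $\phi(N_G[w])$ lies in at most two of the $A_i$ together with $\phi(w)$ and misses roughly $q$ colours, which $O(q/g)$ overlap cannot repair. You also leave $\phi(w)$ itself unspecified when $w$ is shared by several paths. And solvability of the system is genuinely delicate: already when all shifts are $0$ and $H$ is cubic, it is exactly proper $3$-edge-colourability of $H$, which for planar $H$ needs bridgelessness and the Four-Colour Theorem; with the actual shifts (determined by path lengths mod $3$) the system is only harder to analyse. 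Your Euler-formula computation is correct but controls only averages, and the global construction it is meant to feed does not go through without a substantial additional argument that you have not supplied. The paper's inductive peel-off of one long path at a time is precisely what sidesteps this difficulty.
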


\subsection*{Outline and terminology}
\label{sec:terminology}

\noindent 

Given a graph $G$ and an integer $k$, a \emph{suspended $k$-path} in $G$ is a path $P$ of length $k$ with (distinct) endpoints of degree at least $3$ in $G$, and inner vertices of degree $2$ in $G$. 
We denote $G\setminus P$ the graph obtained by removing the edges and inner vertices of $P$ from $G$. Observe that if $G$ has minimum degree at least $2$, then so does $G\setminus P$.
If two suspended paths have the same length and the same endpoints, we say that they are \emph{twins}.

A \emph{hammock} in $G$ is the union of a suspended $2$-path and a suspended $3$-path between two non-adjacent vertices; an illustration is given in \Cref{fig:hammock}.

In $G$, given an integer $d$, a \emph{$d$-vertex} (resp. \emph{$d^-$-vertex}, \emph{$d^+$-vertex}) is a vertex of degree $d$ (resp. at most $d$, at least $d$).

A \emph{separation of order $k$} of $G$ is a pair $(G_0, G_1)$ of subgraphs of $G$ such that\footnote{For all graphs $A,B$, let $A \cup B = (V(A) \cup V(B), E(A) \cup E(B))$ and $A \cap B = (V(A) \cap V(B), E(A) \cap E(B))$.} $G_0\cup G_1 = G$ and $|V (G_0 \cap G_1)| = k$. When $V(G_0\cap G_1)=\{v\}$, we call $v$ a \emph{cut-vertex} of $G$. A graph with no cut-vertices is \emph{$2$-connected}.
A separation $(G_0,G_1)$ \emph{induced by a separating set of vertices} $X$ is such that $V(G_0)\cap V(G_1)=X$.
Observe that if $G$ is $2$-connected, then every $2$-vertex of $G$ is part of a suspended path.

Given a graph $G$ and a positive integer $k$, the $k$-subdivision of $G$, denoted $G^{1/k}$, is obtained by replacing every edge of $G$ by a path of length $k$.

Let $k$ be a positive integer. We denote $\{1,\dots,k\}$ by $[k]$.
In a few places, we rely on an extension of the binomial coefficient, that is defined as $\binom{x}{k} \coloneqq \frac{1}{k!}\prod_{i=0}^{k-1}(x-i)$, for every real number $x$ and nonnegative integer $k$.
Finally, given a finite set of elements $S$, we denote $\cU(S)$ the uniform distribution over $S$.
\medskip

The paper is organised as follows.
In \Cref{sec:upper-bounds}, we derive optimal asymptotic upper bounds on the extremal value of $\fdom$ for graphs of given minimum degree. This is achieved by considering the dual of a fractional dominating colouring, which we call a \emph{fractional bottleneck}.
In \Cref{sec:lower-bounds}, we provide a proof of \Cref{thm:5/2}. To do so, we introduce the notion of \emph{$f$-dominating fractional colourings} for a given demand function $f$, and derive two technical lemmas that let us combine two $f$-dominating fractional colourings obtained inductively on a graph $G$ with a vertex-cut of size $1$ or $2$. 
In \Cref{sec:planar}, we consider planar graphs of minimum degree $2$, and show that their fractional domatic number approaches $3$ when their girth increases.
We conclude our work in \Cref{sec:conclusion} where we first discuss the algorithmic aspects of our results, explaining how to make them constructive, and showing that deciding whether $\fdom(G)\ge 3$ for a given graph $G$ is NP-hard, even if we restrict ourselves to the class of split graphs. In the process, we disprove a proposition from \cite{FKY00}. Finally, we propose a list of open problems related to our work.

\section{Upper bounds on \texorpdfstring{$\fdom$}{FDOM} using duality}
\label{sec:upper-bounds}

Given a graph $G$, a set $X\subseteq V(G)$ is called a \emph{bottleneck} if every dominating set of $G$ intersects $X$. We denote $\tau(G)$ the size of a minimum bottleneck in $G$. Clearly, we have $\dom(G) \le \tau(G)$, and $\tau(G) \le \delta(G)+1$ since every closed neighbourhood is a bottleneck in $G$. In fact, every bottleneck must contain a closed neighbourhood, therefore we have exactly $\tau(G) = \delta(G)+1$. A \emph{fractional bottleneck} is a weight assignment $w\colon V(G) \to [0,1]$ such that for every dominating set $D$ of $G$, one has $w(D) \coloneqq \sum_{v\in D} w(d) \ge 1$. We let $\tau^*(G)$ denote the minimum total weight of a fractional bottleneck of $G$, that is the solution to the following linear program, which is the dual of \eqref{eq:LP-fdom}.

\begin{equation}
    \label{eq:LP-dual}
    \begin{aligned}
    \mbox{Minimise}& \sum_{v\in V(G)} w(v),\\
    \mbox{Subject to}& \left\{\begin{array}{rl}\displaystyle
        w(D) \ge 1& \mbox{for all $D \in \cD(G)$;}\\
        w(v) \ge 0 & \mbox{for all $v\in V(G)$.}
    \end{array} \right.
    \end{aligned}
\end{equation}

\noindent
By the Strong Duality Theorem, one has $\fdom(G)=\tau^*(G)$.

Fractional bottlenecks are a convenient tool for certifying upper bounds on the fractional domatic number. As an introductory demonstration, we use duality to prove that the trivial upper bound $\dom(G) \le \delta(G) + 1$ also holds for the fractional domatic number.

\begin{prop}\label{prop:d+1}
    Let $G$ be a graph of minimum degree $d$. Then $\fdom(G) \le d + 1$.
\end{prop}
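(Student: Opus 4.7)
The plan is to exhibit an explicit fractional bottleneck of total weight $d+1$, then invoke the Strong Duality Theorem (i.e.\ the equality $\fdom(G)=\tau^*(G)$ stated just before the proposition) to conclude. This also serves as a quick illustration of the duality framework that the section advertises.

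Concretely, let $v$ be a vertex of $G$ with $\deg(v)=d$, and define $w\colon V(G)\to[0,1]$ to be the indicator function of the closed neighbourhood $N[v]$, that is $w(u)=1$ if $u\in N[v]$ and $w(u)=0$ otherwise. The total weight of $w$ equals $|N[v]|=d+1$. To check that $w$ is a feasible solution of \eqref{eq:LP-dual}, pick any dominating set $D\in\cD(G)$. Since $D$ dominates $v$, we have $D\cap N[v]\neq\emptyset$, and therefore
\[
w(D)=\sum_{u\in D}w(u)=|D\cap N[v]|\ge 1.
\]
Hence $w$ is a fractional bottleneck, which yields $\tau^*(G)\le d+1$, and then by Strong Duality $\fdom(G)=\tau^*(G)\le d+1$.

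There is essentially no obstacle here: the argument is just the fractional analogue of the observation $\tau(G)=\delta(G)+1$ already mentioned in the text, lifted to the LP via the trivial integral-valued feasible dual solution supported on $N[v]$. The only thing to be careful about is to phrase the bottleneck condition against arbitrary $D\in\cD(G)$ (rather than against the LP variables), so that the inequality $w(D)\ge 1$ is clearly the one appearing in \eqref{eq:LP-dual}.
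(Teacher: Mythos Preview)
Your proof is correct and essentially identical to the paper's own argument: both pick a minimum-degree vertex $v$, take the indicator of $N[v]$ as a fractional bottleneck of weight $d+1$, and conclude via $\fdom(G)=\tau^*(G)$.
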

\begin{proof}
    Let $v$ be a vertex of degree $d$, and assign the weight $1$ to every vertex of $N[v]$. Every dominating set $D$ must intersect $N[v]$, and thus has weight $w(D) \ge 1$. Therefore, we have a fractional bottleneck of weight $d+1$, which certifies that $\fdom(G) = \tau^*(G) \le d+1$.
\end{proof}

It follows directly from the Pigeonhole Principle that, if $G$ has $n$ vertices and domination number $\gamma$, then $\dom(G) \le n/\gamma$. Using fractional bottlenecks, it is straightforward to show that the same upper bound holds for $\fdom(G)$; this was already shown in \cite{GoHe20} using a simple counting argument applied to \Cref{eq:fdom-family}, but we include our alternative proof for the sake of completeness and as another simple illustration of fractional bottlenecks.

\begin{prop}\label{prop:n/gamma}
    Let $G$ be a graph with $n$ vertices and domination number $\gamma$. Then $\fdom(G) \le \frac{n}{\gamma}$, with equality if $G$ is vertex-transitive.
\end{prop}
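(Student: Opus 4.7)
The plan is to handle the inequality via fractional bottlenecks, in the same spirit as \Cref{prop:d+1}, and then to derive equality in the vertex-transitive case via an averaging argument over the automorphism group.

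For the upper bound, I would assign the uniform weight $w(v) = 1/\gamma$ to every vertex $v \in V(G)$. Then any dominating set $D$ satisfies $|D| \ge \gamma$, so $w(D) = |D|/\gamma \ge 1$, which certifies that $w$ is a fractional bottleneck. Its total weight is $n/\gamma$, and by Strong Duality we conclude $\fdom(G) = \tau^*(G) \le n/\gamma$.

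For the equality statement, suppose $G$ is vertex-transitive, and let $D^*$ be a minimum dominating set, so $|D^*| = \gamma$. Let $\Gamma = \mathrm{Aut}(G)$, and consider the multiset $\cF = \{\sigma(D^*) : \sigma \in \Gamma\}$, consisting of $|\Gamma|$ dominating sets, each of size $\gamma$. Counting incidences in two ways,
\[
\sum_{\sigma \in \Gamma} |\sigma(D^*)| = |\Gamma|\,\gamma \quad \text{and} \quad \sum_{\sigma \in \Gamma} |\sigma(D^*)| = \sum_{v \in V(G)} |\{\sigma \in \Gamma : v \in \sigma(D^*)\}|,
\]
where by vertex-transitivity the inner cardinality on the right is the same value $k$ for every $v$. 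Hence $n\,k = |\Gamma|\,\gamma$, so $m(\cF) = k = |\Gamma|\gamma/n$. Then $|\cF|/m(\cF) = n/\gamma$, and by \eqref{eq:fdom-family} we obtain $\fdom(G) \ge n/\gamma$, which combined with the upper bound yields equality.

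The argument is essentially routine; the only subtlety is making sure the averaging step produces a uniform multiplicity, which is exactly where vertex-transitivity is used. No single step looks like it will cause trouble, as both the fractional-bottleneck construction and the group averaging are standard.
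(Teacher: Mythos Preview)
Your proof is correct and follows essentially the same approach as the paper. The upper bound via the uniform weight $1/\gamma$ is identical, and for the equality case the paper averages over all minimum dominating sets (noting that automorphisms permute them) while you average over the $\mathrm{Aut}(G)$-orbit of a single one; both are the same symmetry argument, just phrased through \eqref{eq:fdom-proba} versus \eqref{eq:fdom-family}.
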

\begin{proof}
    We assign the weight $1/\gamma$ to every vertex of $G$. Every dominating set $D$ contains at least $\gamma$ vertices, and thus has weight $w(D) \ge 1$. Therefore, we have a fractional bottleneck of weight $n/\gamma$, which certifies that $\fdom(G) = \tau^*(G) \le \frac{n}{\gamma}$.

    If $G$ is vertex-transitive, let $\mathcal{D}_\gamma$ be the set of dominating sets of $G$ of size $\gamma$, and observe that any two vertices $u$ and $v$ are contained in the same number of dominating sets of $\mathcal{D}_\gamma$. Indeed, by considering an automorphism of $G$ which maps $u$ onto $v$, we obtain a bijection between $\{D \in \mathcal{D}_\gamma : u \in D\}$ and $\{D \in \mathcal{D}_\gamma : v \in D\}$. Consequently, the uniform distribution $\bD \sim \mathcal{U(D_\gamma)}$ verifies $\pr{u \in \bD} = \frac{\gamma}{n}$ for all $u\in V(G)$, since all vertices are equally likely to belong to $\bD$ and $\sum_{v\in V(G)} \pr{v\in \bD} = \mathbb{E}\big[|\bD|\big]=\gamma$. This shows that $\fdom(G) \ge \frac{n}{\gamma}$ if $G$ is vertex-transitive, and equality follows from the previous point.
\end{proof}

While it is possible to prove \Cref{prop:d+1} and \Cref{prop:n/gamma} from \Cref{eq:fdom-family} by reasoning on all possible families of dominating sets of the graph, as was done in \cite{GoHe20}, this approach becomes tedious when computing the fractional domatic number of specific graphs. Instead, by exhibiting a well-chosen fractional bottleneck, we simply need to check that all dominating sets have weight at least $1$, which turns out to be a manageable task when the graph presents many symmetries.

To illustrate how duality helps us compute the fractional domatic number of certain graph families, we apply it to the class of complete bipartite graphs.

\begin{prop}
    For every integers $m\ge n\ge 2$, one has
    \[\fdom(K_{m,n}) = 1+n\pth{1-\frac{1}{m}}.\]
\end{prop}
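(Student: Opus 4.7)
The plan is to apply the duality framework developed earlier in \Cref{sec:upper-bounds}: exhibit an explicit fractional bottleneck of weight $1 + n(1-1/m)$ to establish the upper bound, and pair it with a matching probability distribution over dominating sets (equivalently, a matching fractional dominating colouring) for the lower bound. Since $m,n\ge 2$, no single vertex dominates $K_{m,n}$, so the minimal dominating sets fall into exactly three types: (i) pairs $\{a,b\}$ with $a\in A$ and $b\in B$ (of which there are $mn$); (ii) the side $A$; (iii) the side $B$. Any other dominating set is a superset of one of these, so to verify both a fractional bottleneck and an optimal distribution, it suffices to reason about these three families.

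For the upper bound, I would assign the weight $w(a)=1/m$ to each $a\in A$ and the weight $w(b)=1-1/m$ to each $b\in B$. The total weight is $m\cdot\frac{1}{m}+n\cdot\left(1-\frac{1}{m}\right)=1+n\left(1-\frac{1}{m}\right)$. To verify this is a fractional bottleneck, I would check the three types of minimal dominating sets: a pair $\{a,b\}$ has weight $\frac{1}{m}+\left(1-\frac{1}{m}\right)=1$; the set $A$ has weight $m\cdot\frac{1}{m}=1$; and the set $B$ has weight $n\left(1-\frac{1}{m}\right)=\frac{n(m-1)}{m}$, which is at least $1$ iff $m(n-1)\ge n$. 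The latter is immediate from $m\ge n\ge 2$ (for $n=2$ it reduces to $m\ge 2$, and for $n\ge 3$ we have $n/(n-1)\le 3/2\le n\le m$). By strong LP duality this yields $\fdom(K_{m,n})\le 1+n(1-1/m)$.

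For the lower bound, I would exhibit a matching fractional dominating colouring in the spirit of \eqref{eq:fdom-family}. Using the dominating sets highlighted above, assign primal weights $x_{\{a,b\}}=1/m$ for each of the $mn$ pairs, $x_A=1-n/m$ (which is nonnegative thanks to $m\ge n$), and $x_D=0$ otherwise. Then the constraint at a vertex $a\in A$ reads $n\cdot\frac{1}{m}+\left(1-\frac{n}{m}\right)=1$, and the constraint at a vertex $b\in B$ reads $m\cdot\frac{1}{m}=1$. The objective value is $mn\cdot\frac{1}{m}+\left(1-\frac{n}{m}\right)=1+n\left(1-\frac{1}{m}\right)$, matching the upper bound. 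Equivalently, one can describe this as a random dominating set $\bD$ that takes $\bD=A$ with probability proportional to $1-n/m$ and $\bD=\{a,b\}$ with $(a,b)$ uniform in $A\times B$ with the complementary probability mass.

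There is no significant obstacle: the key step is guessing the right weighting, and this is forced by complementary slackness once one notices the asymmetry $m\ge n$ (it precisely explains why the constraint $n(1-1/m)\ge 1$ is generally slack on the dual side, and symmetrically why the dominating set $B$ receives zero primal weight). The only mild care required is checking the boundary condition $n(m-1)\ge m$, which follows from $m\ge n\ge 2$.
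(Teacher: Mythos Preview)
Your proof is correct and follows essentially the same approach as the paper: the same fractional bottleneck $w(a)=1/m$, $w(b)=1-1/m$ for the upper bound, and the same primal solution $x_{\{a,b\}}=1/m$, $x_A=1-n/m$ for the lower bound. The only cosmetic differences are that you first isolate the three types of minimal dominating sets before checking the bottleneck constraint, and you verify $n(1-1/m)\ge 1$ via $m(n-1)\ge n$ rather than the paper's $n(1-1/m)\ge 2-2/m\ge 1$.
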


\begin{proof}
    Let $G=K_{m,n}$, and let $A,B$ be the two parts of $G$, with $|A|=m$ and $|B|=n$.

    We first exhibit a fractional dominating colouring of $G$ of weight $1+n(1-1/m)$, under the form of a solution to \eqref{eq:LP-fdom}. For every $a\in A$ and $b\in B$, $D\coloneqq \{a,b\}$ is a dominating set of $G$; we assign it the weight $x_D \coloneqq 1/m$. We assign to the dominating set $A$ the weight $x_A \coloneqq 1-n/m$. All other dominating sets have weight $0$. It is straightforward to check that this satisfies the constraints of \eqref{eq:LP-fdom}.

    We now exhibit a fractional bottleneck of $G$ of weight $1+n(1-1/m)$. Let $w(u)\coloneqq 1/m$ for every $u\in A$, and $w(v)\coloneqq 1-1/m$ for every $v\in B$. Let $D$ be a dominating set of $G$. If $D = A$, then $w(D) = 1$. If $D=B$, then $w(D) = n(1-1/m) \ge 2 - 2/m \ge 1$. Otherwise, $D$ contains at least one vertex from $A$ and $B$, thus $w(D) \ge 1$. This weight assignment satisfies the constraints of \eqref{eq:LP-dual}.
\end{proof}

\newcommand{\Gnd}{H_{n,d}}

For integers $1 \le d < n$, let $\Gnd$ be the bipartite graph consisting of parts $A = [n]$ and $B = \binom{A}{d}$ and such that $ab \in A \times B$ is an edge if and only if $a \in b$. In particular, $H_{n,1}$ is isomorphic to a matching on $2n$ vertices, $H_{n,2}$ is isomorphic to the subdivision of the complete graph $K_n$, and more generally $\Gnd$ is the incidence graph of a complete $d$-uniform hypergraph on $n$ vertices.

\begin{fact}\label{fact:Gnd}
    For every $1\le d < n$, $\Gnd$ has minimum degree equal to $d$.
\end{fact}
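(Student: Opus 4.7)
The plan is a routine degree count split by side of the bipartition. First I would handle the $B$-side of $\Gnd$: every vertex $b \in B$ is by definition a $d$-subset of $A$, and in $\Gnd$ its neighbours are precisely the $d$ elements of $A$ that it contains. Hence every vertex of $B$ has degree exactly $d$, which already yields $\delta(\Gnd) \le d$.

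To obtain the matching lower bound, I would compute the degrees on the $A$-side. For $a \in A$, the neighbours of $a$ are the $d$-subsets of $A$ containing $a$, which are in bijection with the $(d-1)$-subsets of $A \setminus \{a\}$; therefore $\deg(a) = \binom{n-1}{d-1}$. It then suffices to check that $\binom{n-1}{d-1} \ge d$ whenever $n > d \ge 1$. Since $n - 1 \ge d$ by hypothesis, monotonicity of the binomial coefficient in its upper parameter gives $\binom{n-1}{d-1} \ge \binom{d}{d-1} = d$, so every vertex of $A$ has degree at least $d$, and consequently $\delta(\Gnd) \ge d$.

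There is no real obstacle here; the only noteworthy point is that the extremal case $n = d+1$ saturates both sides at once, giving degree exactly $d$ on $A$ as well as on $B$.
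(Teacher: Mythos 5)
Your proof is correct and is exactly the routine degree count that the paper leaves implicit (the statement is given as a \emph{Fact} without proof): vertices of $B$ have degree exactly $d$, vertices of $A$ have degree $\binom{n-1}{d-1}\ge\binom{d}{d-1}=d$ since $n-1\ge d$. Nothing further is needed.
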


By \Cref{prop:asymptotic-bound} and \Cref{fact:Gnd}, we obtain $\fdom(\Gnd) \ge \frac{d+1}{1 + \ln(d+1)}$. We show that if $n$ is much larger than $d$, then this lower bound is asymptotically tight as $d \rightarrow \infty$. 

\begin{prop}
\label{prop:tight-fdom}
    Let $d$ be an integer, and $n \coloneqq d(q + 1)$ for some integer $q \ge \ln(d)$. Then $\fdom(\Gnd) \le (1+o(1))\, d/\ln d$ as $d\to \infty$.
\end{prop}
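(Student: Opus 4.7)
The plan is to use LP duality $\fdom(\Gnd) = \tau^*(\Gnd)$ from \Cref{sec:upper-bounds} and exhibit a symmetric fractional bottleneck of weight $(1+o(1))\,d/\ln d$. Since $\Gnd$ carries the natural transitive action of $S_n$ on $A$ (extending to $B=\binom{A}{d}$), standard LP-symmetrisation lets us restrict to bottlenecks of the form $w(a) = \alpha$ on $A$ and $w(b) = \beta$ on $B$. The structure of dominating sets $D$ of $\Gnd$ is very rigid: writing $s\coloneqq|D\cap A|$, every $b \in \binom{A\setminus D}{d}$ has all its neighbours in $A\setminus D$ and must therefore lie in $D$, forcing $|D\cap B| \ge \binom{n-s}{d}$ whenever $s\le n-d$. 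So the bottleneck conditions reduce to
\[
s\alpha + \binom{n-s}{d}\beta \ge 1 \quad \text{for all } 0\le s\le n-d,
\]
together with $n\alpha \ge 1$ (from $D=A$) and $s\alpha + \beta \ge 1$ for $n-d+1\le s\le n-1$.

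We would then set $\alpha \coloneqq (1+\eta)\frac{d}{n\ln d}$ and $\beta \coloneqq \frac{d}{(\ln d)^2\binom{n}{d}}$, for a slowly vanishing parameter $\eta = \eta(d) \to 0$ (for instance $\eta \coloneqq (\ln d)^{-1/2}$). The total weight is then $n\alpha + \binom{n}{d}\beta = (1+\eta)\frac{d}{\ln d} + \frac{d}{(\ln d)^2} = (1+o(1))\frac{d}{\ln d}$, as required. The boundary cases $s\in\{0,n\}$ and $s\ge n-d+1$ are easy to verify for $d$ large. The heart of the proof is the main case $1\le s\le n-d$, which we would handle via the pointwise estimate
\[
\frac{\binom{n-s}{d}}{\binom{n}{d}} = \prod_{i=0}^{d-1}\pth{1-\frac{s}{n-i}} \ge \pth{1 - \frac{s}{n-d+1}}^d,
\]
combined with the hypothesis $q\ge \ln d$, which yields $d/n\le 1/(\ln d + 1)$. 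Substituting $u\coloneqq s/(n-d+1)\in (0,1)$ and using $(n-d+1)/n = 1 - O(1/\ln d)$, the constraint becomes (to leading order) the convex inequality $u \cdot (1+\eta)\frac{d}{\ln d} + (1-u)^d \cdot \frac{d}{(\ln d)^2} \ge 1$, whose minimum over $u$ is attained at $u^\ast$ satisfying $(1-u^\ast)^{d-1} \sim \ln d/d$, i.e.\ $u^\ast \sim \ln d/d$. Plugging back in, this minimum evaluates to $(1+\eta)\bigl(1 - O(\ln\ln d/\ln d)\bigr)$, which exceeds $1$ provided $\eta \to 0$ dominates $\ln\ln d/\ln d$.

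The principal obstacle is the quantitative control, uniformly in $s$, of the discrete-to-continuous approximation errors: the gap between $1-s/(n-i)$ and $1-s/n$ as $i$ ranges up to $d-1$, and the second-order corrections in the optimisation over $u$. Both errors are of order at most $(\ln\ln d)/\ln d$, which is comfortably dominated by $\eta = (\ln d)^{-1/2}$; absorbing them into the $\eta$-slack of $\alpha$ inflates the total weight only by a further factor of $1+o(1)$, preserving the bound.
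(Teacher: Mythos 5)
Your proposal is correct and follows the same overall strategy as the paper: a fractional bottleneck (dual of \eqref{eq:LP-fdom}) that is constant on $A$ and constant on $B$, justified by the same structural fact that a dominating set with $s$ vertices in $A$ must contain at least $\binom{n-s}{d}$ vertices of $B$. The asymptotics you sketch do check out: the minimiser $u^\ast\sim \ln d/d$ gives a constraint value $(1+\eta)\bigl(1-\Theta(\ln\ln d/\ln d)\bigr)$, and $\eta=(\ln d)^{-1/2}$ absorbs both this deficit and the $O(1/\ln d)$ error from replacing $n-i$ by $n-d+1$ and $n$. Where the paper diverges is in how the two constants are tuned so that \emph{no optimisation over $s$ is needed}: it sets a threshold $m\coloneqq(\ln d-2\ln\ln d)q$ and takes $w_A\coloneqq 1/m$ on $A$ and $w_B\coloneqq 1/\binom{n-m}{d}$ on $B$. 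Then for $s\ge m$ the $A$-part alone has weight $sw_A\ge 1$, and for $s\le m$ the $B$-part alone has weight $\binom{n-s}{d}w_B\ge\binom{n-m}{d}w_B=1$, so feasibility is immediate and all the analytic work is concentrated in a single computation of the total weight $nw_A+\binom{n}{d}w_B=(1+o(1))\,d/\ln d$. Your route buys nothing extra here and costs you the uniform-in-$s$ error control that you correctly identify as the delicate point; if you rewrite, I would adopt the threshold choice of weights, which turns that entire step into two one-line cases.
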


\begin{proof}
    We construct a fractional bottleneck of $\Gnd$ of weight $(1+o(1))\, d/\ln d$ as $d\to\infty$. 
    Let $m\coloneqq (\ln d - 2\ln \ln d)q$. We assign to each vertex $a\in A$ weight $w_A \coloneqq 1/m$, and to each vertex $b\in B$ weight $w_B \coloneqq 1/\binom{n-m}{d}$.
    The weight of a dominating set of $\Gnd$ which contains more than $n-d$ vertices of $A$ is at least $\frac{n-d}{m} > \frac{dq}{q} > 1$.
    If a dominating set of $\Gnd$ contains $i \in [0,n-d]$ vertices in $A$, it must contain at least $\binom{n-i}{d}$ vertices in $B$. 
    So the weight induced by a dominating set which contains $i$ vertices in $A$ is at least $iw_A + \binom{n-i}{d}w_B$. If $i\ge m$, then $iw_A \ge 1$, while if $i\le m$, then $\binom{n-i}{d}w_B \ge 1$, so this is a valid fractional bottleneck of $\Gnd$. Its weight is
    
    \begin{align*}
        n w_A + \binom{n}{d}w_B &= \frac{d(q+1)}{(\ln d - 2\ln\ln d)q} + \prod_{i=1}^d \frac{n-d+i}{n-m-d+i} \\
        &\le \frac{d(q+1)}{(\ln d - 2\ln \ln d)q} + \pth{\frac{n-d}{n-m-d}}^d\\
        &\le \frac{d}{\ln d - 2\ln\ln d-1} +  \pth{\frac{q(d+1) - d}{q(d+1)- d - q(\ln d - 2\ln \ln d)}}^d \\
        &= (1+o(1))\, \frac{d}{\ln d} +  \pth{1 + \frac{\ln d-2\ln\ln d}{d- \ln d + 2\ln\ln d}}^d \\
        &\le (1+o(1))\, \frac{d}{\ln d} + \exp\pth{\frac{\ln d-2\ln\ln d}{d- \ln d + 2\ln\ln d} \cdot d}\\
        &= (1+o(1))\, \frac{d}{\ln d} + \exp\pth{\ln d-2\ln\ln d + \frac{(\ln d - 2 \ln\ln d)^2}{d - \ln d + 2\ln \ln d}}\\
        &\le (1+o(1))\, \frac{d}{\ln d} + \frac{2d}{(\ln d)^2}\\
        &= (1+o(1))\, \frac{d}{\ln d} \qedhere
    \end{align*} 
\end{proof}
\begin{rk}
\label{rk}
    At no point in our analysis have we used the structure of $\Gnd[A]$. By construction, $A$ is an independent set, but we could add any edges within $A$, and the same upper bound still holds. In particular, if we make $A$ a clique, we have a split graph (which is chordal).
\end{rk}

When $G$ is a graph of minimum degree $d$, one could wonder whether it is possible to find a dominating $(p:q)$-colouring of $G$ with $p/q$ increasing as a function of $d$, and $q$ bounded as a function of $d$. The following proposition shows that this is not possible in general.

\begin{prop}
    \label{prop:pqnotenough}
    For every integers $q_0,d\ge 1$, if $n\ge (d-1)\binom{2q_0+1}{q_0} + 1$, then there is no dominating $(p:q)$-colouring of $\Gnd$ for any integers $q\le q_0$ and $p>2q$.
\end{prop}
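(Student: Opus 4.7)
Suppose, for contradiction, that there is a dominating $(p:q)$-colouring $\phi$ of $\Gnd$ with $q \le q_0$ and $p \ge 2q+1$. The plan is to exploit the dominating condition at each vertex of $B=\binom{[n]}{d}$ to obtain a strong restriction on the family $\{\phi(a) : a \in A\}$ of $q$-subsets of $[p]$, and then to conclude by double counting.

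First, for every $b \in B$, the equality $\phi(b) \cup \bigcup_{a \in b} \phi(a) = [p]$ together with $|\phi(b)|=q$ forces $\bigl|\bigcup_{a\in b}\phi(a)\bigr| \ge p-q \ge q+1$. Equivalently, no $(q+1)$-subset of $[p]$ is disjoint from all $\phi(a)$, $a \in b$. Since this must hold for every $d$-subset $b$ of $[n]$, we obtain the clean reformulation: for every $T \in \binom{[p]}{q+1}$, the number of $a \in A$ with $\phi(a) \cap T = \emptyset$ is at most $d-1$.

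Now I would double count the pairs $(a,T)$ with $a\in A$, $T \in \binom{[p]}{q+1}$, and $\phi(a)\cap T = \emptyset$. For fixed $a$, there are exactly $\binom{p-q}{q+1}$ such $T$'s, so the total is $n\binom{p-q}{q+1}$. By the previous paragraph, the total is at most $(d-1)\binom{p}{q+1}$. Hence
\[
n \;\le\; (d-1)\,\frac{\binom{p}{q+1}}{\binom{p-q}{q+1}}.
\]

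The last step is to show that the right-hand side is maximised (over $q\le q_0$ and $p\ge 2q+1$) at $p=2q+1$ and $q=q_0$, giving the desired bound $(d-1)\binom{2q_0+1}{q_0}$. For fixed $q\ge 1$, a short calculation of the ratio $f(p+1)/f(p)$ with $f(p)\coloneqq \binom{p}{q+1}/\binom{p-q}{q+1}$ yields
\[
\frac{f(p+1)}{f(p)} \;=\; \frac{(p+1)(p-2q)}{(p-q)(p-q+1)},
\]
and expanding shows the difference of denominator and numerator equals $q(q+1)>0$, so $f$ is strictly decreasing on $p\ge 2q+1$. Thus $f(p) \le f(2q+1) = \binom{2q+1}{q+1} = \binom{2q+1}{q}$. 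A similar one-line ratio check shows $\binom{2q+1}{q}$ is increasing in $q$, hence $\le \binom{2q_0+1}{q_0}$. Combining, $n\le (d-1)\binom{2q_0+1}{q_0}$, contradicting the hypothesis.

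The only mildly delicate step is the monotonicity of $f$; everything else is formal manipulation of the domination condition and straightforward double counting. No appeal to the fractional machinery of \Cref{sec:upper-bounds} is needed, since the statement concerns integral $(p:q)$-colourings.
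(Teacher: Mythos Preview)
Your proof is correct, but it takes a genuinely different route from the paper's.

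The paper first reduces to $p=2q+1$: it observes that any dominating $(p:q)$-colouring can be collapsed to a dominating $(p':q)$-colouring for any $q\le p'\le p$ by pushing colours larger than $p'$ down into $[q]$. Once $p=2q+1$, a single application of the Pigeonhole Principle (using $\binom{2q+1}{q}\le\binom{2q_0+1}{q_0}$ and $n>(d-1)\binom{2q_0+1}{q_0}$) produces $d$ vertices $u_1,\dots,u_d\in A$ with identical colour sets; then the vertex $\{u_1,\dots,u_d\}\in B$ sees only $|\phi(u_1)|+|\phi(\{u_1,\dots,u_d\})|=2q<p$ colours, contradicting domination. By contrast, you never reduce $p$: you extract from the domination condition at each $b\in B$ the covering statement that no $(q+1)$-subset of $[p]$ is avoided by $d$ of the sets $\phi(a)$, double count pairs $(a,T)$ to get $n\le (d-1)\binom{p}{q+1}/\binom{p-q}{q+1}$, and then optimise over $p$ and $q$ afterwards. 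Your monotonicity checks for $f(p)$ and for $\binom{2q+1}{q}$ are correct as written.

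What each buys: the paper's argument is a two-line pigeonhole once the reduction is in hand, and it identifies an explicit bad vertex of $B$. Your argument is longer but self-contained (no auxiliary colour-collapsing lemma), and the inequality $n\binom{p-q}{q+1}\le(d-1)\binom{p}{q+1}$ is a slightly sharper intermediate statement that could be of independent use if one wanted quantitative bounds for general $p$.
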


\begin{proof}
    First, observe that if a graph $G$ has a dominating $(p:q)$-colouring $\sigma$, then it also has a dominating $(p':q)$-colouring for every $q \le p'\le p$ (one can obtain such a colouring from $\sigma$ by replacing in $\sigma(v)$ the subset of colours $x$ such that $x>p'$ with a subset of $[q] \setminus \sigma(v)$ of same cardinality, for every vertex $v$). Therefore, it suffices to prove the statement with $p=2q+1$.
    Let $\phi$ be any $(2q+1:q)$-colouring of $\Gnd$ with $q\le q_0$. By the Pigeonhole Principle (and using that $\binom{2q+1}{q} \le \binom{2q_0+1}{q_0}$), there exists at least $d$ distinct vertices $u_1, \ldots, u_d \in A$ such that $\phi(u_1) = \cdots = \phi(u_d)$. Consider the vertex $w \coloneqq \{u_1, \ldots, u_d\}$ in $B$. 
    Then one has $\card{\phi(N[w])} \le 2q$, thus $\phi$ is not a dominating $(2q+1:q)$-colouring of $\Gnd$.
\end{proof}

\begin{rk}
\label{rk2}
    Again, we never use the structure of $\Gnd[A]$ in our analysis. Therefore, we could add any set of edges within $A$, and the same conclusion holds.
\end{rk}

Taking the particular case $q_0 = 1$, \Cref{prop:pqnotenough} states that if $n \ge 3d-2$, then there is no dominating $(3:1)$-colouring of $\Gnd$. 
So we retrieve (with a more direct proof) the result from \cite{Zel83} that $\dom(\Gnd) \le 2$

We now show that the lower bound $5/2$ on the fractional domatic number of connected graphs of minimum degree $2$ is best possible up to finitely many exceptional graphs.

\begin{figure}[!htbp]
        \centering
            \begin{tikzpicture}[long dash/.style={dash pattern=on 10pt off 2pt, draw=gray},Pattern/.style={pattern=north east hatch teal, pattern color=gray!30, hatch distance=7pt, hatch thickness=2pt}]
            \draw[preaction={fill=gray!15}, Pattern,ultra thick,long dash,name path=left] plot[smooth cycle] coordinates {(3.3,1.5) (2.112,1.5) (2.112,-2) (3.3,-2)};

            \node[shape=circle,fill=gray!20,inner sep=1pt,minimum size=7pt] at (2.772,-0.33) (g) {$G_0$};

            \node[label={[label distance = 0.1cm]above left:{$u_0$}},shape=circle,draw=black,fill,inner sep=0pt,minimum size=6pt, above =0.5cmof g] (u1) {};
            \node[label={[label distance = 0.1cm]below left:{$v_0$}},shape=circle,draw=black,fill,inner sep=0pt,minimum size=6pt, below=0.5cm of g] (u3) {};

                \node[label={\small },shape=circle,inner sep=0pt,minimum size=6pt, above=0.3cm of u3] (dotua1) {};
                \node[label={\small },shape=circle,inner sep=0pt,minimum size=6pt, above right=0.3cm of u3] (dotua2) {};
                \node[label={\small },shape=circle,inner sep=0pt,minimum size=6pt, right=0.3cm of u3] (dotua3) {};
                \node[label={\small },shape=circle,inner sep=0pt,minimum size=6pt, below right=0.3cm of u3] (dotua4) {};
                \node[label={\small },shape=circle,inner sep=0pt,minimum size=6pt, below=0.3cm of u3] (dotua5) {};
                \draw[line width=2pt,dotted] (u3) -- (dotua1);
                \draw[line width=2pt,dotted] (u3) -- (dotua2);
                \draw[line width=2pt,dotted] (u3) -- (dotua3);
                \draw[line width=2pt,dotted] (u3) -- (dotua4);
                \draw[line width=2pt,dotted] (u3) -- (dotua5);

                \node[label={\small },shape=circle,inner sep=0pt,minimum size=6pt, above=0.3cm of u1] (dotub1) {};
                \node[label={\small },shape=circle,inner sep=0pt,minimum size=6pt, above right=0.3cm of u1] (dotub2) {};
                \node[label={\small },shape=circle,inner sep=0pt,minimum size=6pt, right=0.3cm of u1] (dotub3) {};
                \node[label={\small },shape=circle,inner sep=0pt,minimum size=6pt, below right=0.3cm of u1] (dotub4) {};
                \node[label={\small },shape=circle,inner sep=0pt,minimum size=6pt, below=0.3cm of u1] (dotub5) {};
                \draw[line width=2pt,dotted] (u1) -- (dotub1);
                \draw[line width=2pt,dotted] (u1) -- (dotub2);
                \draw[line width=2pt,dotted] (u1) -- (dotub3);
                \draw[line width=2pt,dotted] (u1) -- (dotub4);
                \draw[line width=2pt,dotted] (u1) -- (dotub5);

                \node[label={[label distance = 0.1cm]left:{$z$}},shape=circle,draw=black,fill,inner sep=0pt,minimum size=6pt, left =1.32cm of g] (d) {};
                \node[label={[label distance = 0.1cm]above left:{$x$}},shape=circle,draw=black,fill,inner sep=0pt,minimum size=6pt, above left =1.32cm of d] (a) {};
                \node[label={[label distance = 0.1cm]below left:{$y$}},shape=circle,draw=black,fill,inner sep=0pt,minimum size=6pt, below left =1.32cm of d] (c) {};
                
                \draw[line width=1.25pt,black!60] (u1) -- (a) -- (c)  -- (u3) -- (d) -- (u1);
                
            \end{tikzpicture}
\caption{A hammock between two non-adjacent vertices $u_0$ and $v_0$.}
\label{fig:hammock}
\end{figure}
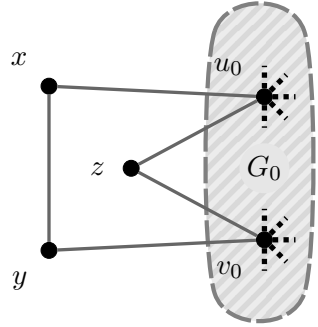

\begin{prop}
    \label{prop:5/$2$-hammock}
    If a graph $G$ contains a hammock, then $\fdom(G)\le 5/2$. 
\end{prop}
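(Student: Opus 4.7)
The plan is to apply duality: I will construct an explicit fractional bottleneck of $G$ of total weight $5/2$, from which $\fdom(G) = \tau^*(G) \le 5/2$ follows by Strong Duality, exactly as in the proof of \Cref{prop:d+1}.

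Label the hammock as in \Cref{fig:hammock}, with $u_0$ and $v_0$ its non-adjacent endpoints, $z$ the inner vertex of the suspended $2$-path, and $x, y$ the inner vertices of the suspended $3$-path, ordered so that $u_0 \sim x \sim y \sim v_0$. Then
\[ N[z] = \{u_0, z, v_0\}, \quad N[x] = \{u_0, x, y\}, \quad N[y] = \{x, y, v_0\}. \]
I would set $w(u) \coloneqq 1/2$ for each $u \in \{u_0, v_0, x, y, z\}$ and $w(u) \coloneqq 0$ for every other vertex of $G$; the total weight is then $5/2$.

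The substantive step is to verify that every dominating set $D$ of $G$ satisfies $w(D) \ge 1$, which is equivalent to $\lvert D \cap \{u_0, v_0, x, y, z\}\rvert \ge 2$. The key observation is that $x$, $y$, $z$ have degree exactly $2$ in $G$, so their closed neighbourhoods lie entirely within the hammock; therefore the domination of $x$, $y$, $z$ by $D$ is determined solely by $D \cap \{u_0, v_0, x, y, z\}$. Arguing by contradiction, I would assume $\lvert D \cap \{u_0, v_0, x, y, z\}\rvert \le 1$ and run through the six resulting cases (empty intersection, or intersection equal to one of $\{u_0\}$, $\{v_0\}$, $\{x\}$, $\{y\}$, $\{z\}$): in each case at least one of $N[x]$, $N[y]$, $N[z]$ is disjoint from $D$, contradicting that $D$ dominates $G$.

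The only real work is this final case check, which is routine: the three $3$-element sets $N[x], N[y], N[z]$ cover $\{u_0, v_0, x, y, z\}$ in a configuration that forces any hitting set to have size at least $2$, so no genuine obstacle is anticipated.
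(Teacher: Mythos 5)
Your proposal is correct and uses exactly the same fractional bottleneck as the paper (weight $1/2$ on the five hammock vertices, $0$ elsewhere), certified via the same duality argument; the only difference is that you organise the verification as a small transversal computation on $\{N[x],N[y],N[z]\}$ rather than the paper's case split on $D\cap\{u_0,v_0\}$, which is an equally valid and equally routine check.
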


\begin{proof}
    Let $u_0,v_0$ be the endpoints of the hammock; let $u_0 \link x \link y \link v_0$ and $u_0 \link z \link v_0$ be the two suspended paths of the hammock; 
    let $G_0 \coloneqq G\setminus \{x,y,z\}$ be the graph obtained from $G$ by removing the hammock; 
    and let $H$ be the subgraph (isomorphic to $C_5$) induced by the hammock in $G$. See \Cref{fig:hammock} for an illustration.
    We prove the result by constructing a fractional bottleneck $w\colon V(G) \to [0,1]$ of $G$ of weight $5/2$ as follows: 
    \[ w(x)\coloneqq \begin{cases}
        1/2 &\mbox{if } x\in V(H);\\
        0 & \mbox{otherwise.}
    \end{cases}\]
    The weight of $w$ is $\frac{1}{2}\,|V(H)|=5/2$, and it remains to check that $w$ is indeed a fractional bottleneck of $G$. Let $D$ be any dominating set of $G$; we show that $w(D)\ge 1$.
    Let $D_0 \coloneqq D\cap V(G_0)$. If $\{u_0,v_0\} \subseteq  D_0$, then $w(D) \ge w(u_0)+w(v_0)=1$ and we are done. 
    If $\{u_0,v_0\} \cap D_0 = \emptyset$, then $x,y,z$ are not dominated by $D_0$, so they must be dominated by $D\setminus D_0$. It follows that $z\in D$, and (at least) one of $x$ and $y$ is in $D$. Hence $w(D) \ge w(z) + \min \{w(x),w(y)\} = 1$ and we are done.
    The last remaining case to consider is when exactly one of $u_0$ and $v_0$ is in $D_0$, say $u_0$ without loss of generality. Then $y$ is not dominated by $D_0$, so it must be dominated by $D\setminus D_0$. We infer that $D\setminus D_0$ is non-empty, hence $w(D) = w(D_0) + w(D\setminus D_0) \ge w(u_0) + \frac{1}{2} = 1$, as desired.
\end{proof}

So, in order to construct a graph $G$ of minimum degree $2$ with $\fdom(G)\ge 5/2$, one can take any connected graph $G_0$ of minimum degree $2$ with $2$ non-adjacent vertices $u_0,v_0$ (one could even have $\deg(u_0) < 2$ or $\deg(v_0) < 2$ in $G_0)$, and add a hammock between $u_0$ and $v_0$.

\begin{cor}
    \label{prop:5/$2$-infinite-family}
    There are infinitely many connected graphs $G$ of minimum degree $2$ such that $\fdom(G)\le 5/2$. 
\end{cor}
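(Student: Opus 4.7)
The plan is a direct packaging of \Cref{prop:5/$2$-hammock}: it suffices to exhibit an infinite family of pairwise non-isomorphic connected graphs of minimum degree at least $2$, each of which contains a hammock. The construction is exactly the one sketched in the paragraph immediately above the statement. Starting from any connected graph $G_0$ of minimum degree at least $2$ with two non-adjacent vertices $u_0,v_0$, I would attach a hammock between $u_0$ and $v_0$ by introducing three fresh vertices $x,y,z$ together with the edges $u_0x,\, xy,\, yv_0,\, u_0z,\, zv_0$. The resulting graph $G$ is connected (it was already, and we only added edges and pendant paths between two existing vertices), contains a hammock between $u_0$ and $v_0$ by construction, and still has minimum degree at least $2$: the three new vertices have degree exactly $2$, and attaching the hammock only increases the degrees of $u_0$ and $v_0$.

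To make the family infinite, I would simply let the base graph $G_0$ range over an unbounded family of graphs meeting the hypotheses. The cleanest choice is to set $G_0 \coloneqq C_n$ for each integer $n\ge 4$, which is connected, $2$-regular, and admits two non-adjacent vertices $u_0,v_0$ (any two vertices of $C_n$ at cyclic distance at least $2$). The graphs $G_n$ obtained in this way have $n+3$ vertices each and are therefore pairwise non-isomorphic, so they form an infinite family. Applying \Cref{prop:5/$2$-hammock} to each $G_n$ yields $\fdom(G_n)\le 5/2$, which is the desired conclusion.

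No substantial obstacle is expected here, as the corollary is essentially a packaging of the previous proposition. The only routine checks are that gluing the hammock preserves minimum degree at least $2$ and that $C_n$ for $n\ge 4$ indeed admits two non-adjacent vertices; both are immediate from the construction.
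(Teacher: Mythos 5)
Your proposal is correct and matches the paper's argument: the corollary is obtained exactly as you describe, by gluing a hammock between two non-adjacent vertices of an arbitrary connected base graph of minimum degree $2$ and invoking \Cref{prop:5/$2$-hammock}. Your instantiation with $G_0 = C_n$ for $n\ge 4$ is a perfectly valid way to make the family explicitly infinite.
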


Every graph constructed to prove \Cref{prop:5/$2$-infinite-family} contains a copy of $C_5$ as an induced subgraph. So one may wonder how the fractional domatic number is affected if we forbid $C_5$ as a subgraph. We show that it is not possible to derive a general lower bound better than $5/2$ even if we restrict to bipartite graphs of girth $6$. We do so by constructing a family of bipartite graphs of girth $6$ whose fractional domatic number, which we compute exactly, approaches $5/2$. This also shows that $\fdom$ can take infinitely many values in the proximity of $5/2$.

\begin{figure}[!ht]
    \centering
    \begin{tikzpicture}[baseline=0pt, every node/.style={shape=circle,draw=black,fill,inner sep=0pt,minimum size=6pt}, every edge/.style={line width=1.25pt,draw,black}]
    \def\R{1}
    \def\n{4}
    \def\x{0.2}

    \fill[blue!20] (-\x,\R-\x) rectangle (\x,\n*\R+\x);
    \node[fill=none,draw=none] at (0,0.4) {\textcolor{blue}{$W_0$}};
    \fill[blue!20] (3-\x,\R-\x) rectangle (3+\x,\n*\R+\x);
    \node[fill=none,draw=none] at (3,0.4) {\textcolor{blue}{$W_1$}};

    \fill[red!20] (-0.3,\R-\x) rectangle (-1.1,\n*\R+\x);
    \node[fill=none,draw=none] at (-0.7,0.4) {\textcolor{red}{$U_0$}};
    \fill[red!20] (3.3,\R-\x) rectangle (4.1,\n*\R+\x);
    \node[fill=none,draw=none] at (3.7,0.4) {\textcolor{red}{$U_1$}};

    \fill[green!20] (0.4,\R-\x) rectangle (0.8,\n*\R+\x);
    \node[fill=none,draw=none] at (0.6,0.4) {\textcolor{green!50!black}{$V_0$}};
    \fill[green!20] (2.2,\R-\x) rectangle (2.6,\n*\R+\x);
    \node[fill=none,draw=none] at (2.4,0.4) {\textcolor{green!50!black}{$V_1$}};
    
    \foreach \i in {1,...,\n} {
        \coordinate[circle] (w\i) at (0,\i*\R);
        \coordinate[circle] (w'\i) at (3,\i*\R);
    }
    \foreach \i in {1,...,\n} {
        \foreach \j in {1,...,\n} {
            \draw (w\i) -- (w'\j) node[pos=0.17, circle, scale=0.6] {}
            node[pos=0.83, circle, scale=0.6] {};
        }
    }
    \pgfmathtruncatemacro{\prev}{\n-1}
    \foreach \i in {1, ..., \prev} {
       \pgfmathtruncatemacro{\next}{\i+1}
        \foreach \j in {\next,...,\n} {
            \draw (w\i) to[bend left=90] node[midway, circle, scale=0.6] {} (w\j);
            \draw (w'\i) to[bend right=90] node[midway, circle, scale=0.6] {} (w'\j) ;
        }
    }
    
\end{tikzpicture}
    \caption{The graph $G_4$ constructed in \Cref{prop:bipartite-girth6}}
    \label{fig:prop6}
\end{figure}

\begin{prop}
    \label{prop:bipartite-girth6}
    For every integer $n \ge 2$, there exists a bipartite graph $G$ of minimum degree $2$ and girth $6$ such that $\fdom(G) \;=\; \frac{5n-2}{2n-1} \;=\; \frac{5}{2} + \frac{1}{4n-2}$.
\end{prop}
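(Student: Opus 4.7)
The plan is to construct $G_n$ explicitly as the graph depicted in \Cref{fig:prop6}. Let $W_0 = \{w_1,\ldots,w_n\}$ and $W_1 = \{w'_1,\ldots,w'_n\}$ be two disjoint independent sets. For every pair $\{i,j\} \subseteq [n]$ with $i \neq j$, introduce vertices $u_{\{i,j\}}^0$ adjacent to $w_i, w_j$ and $u_{\{i,j\}}^1$ adjacent to $w'_i, w'_j$; let $U_k$ denote the resulting $u^k$-vertices. For every $(i,j) \in [n] \times [n]$, add a suspended $3$-path $w_i v_{i,j}^0 v_{i,j}^1 w'_j$ through two new vertices, and let $V_k$ denote the set of $v^k$-vertices. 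It is then routine to check that $G_n$ is bipartite (with parts $W_0 \cup V_1 \cup U_1$ and $W_1 \cup V_0 \cup U_0$), has minimum degree exactly $2$ (attained at every $u$- and $v$-vertex), and has girth at least $6$, with equality for $n \ge 3$ realised by hexagons of the form $w_i u_{\{i,j\}}^0 w_j u_{\{j,k\}}^0 w_k u_{\{i,k\}}^0 w_i$.

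For the upper bound $\fdom(G_n) \le \frac{5n-2}{2n-1}$, my approach is to exhibit a fractional bottleneck in the dual LP \eqref{eq:LP-dual}. The automorphism group of $G_n$ acts transitively on each of the six classes $W_0, W_1, U_0, U_1, V_0, V_1$ and admits the $0 \leftrightarrow 1$ involution. Since \eqref{eq:LP-dual} is invariant under graph automorphisms, averaging any feasible $w$ over its orbit produces an equally optimal $\bar w$ that is constant on each orbit. I may therefore restrict to the ansatz $w \equiv \alpha$ on $W_0 \cup W_1$, $w \equiv \beta$ on $V_0 \cup V_1$, $w \equiv \gamma$ on $U_0 \cup U_1$. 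Under this ansatz, the constraint $w(D) \ge 1$ for every dominating set $D$ reduces to a short list of inequalities, parametrised by $|D \cap W_0|$ and $|D \cap W_1|$: each suspended $2$-path inside $W_k$ contributes $\gamma$ to $w(D)$ unless both its $W_k$-endpoints lie in $D$, and each suspended $3$-path across contributes at least $\beta$ unless at least one endpoint in $W_0 \cup W_1$ lies in $D$. Minimising the objective $2n\alpha + 2n^2\beta + n(n-1)\gamma$ under this finite system yields the optimum $\frac{5n-2}{2n-1}$.

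For the matching lower bound $\fdom(G_n) \ge \frac{5n-2}{2n-1}$, my plan is to exhibit a feasible primal LP solution of the same value, guided by complementary slackness with the optimal dual above. The dominating sets that saturate the bottleneck are highly structured symmetric sets (canonical examples include $W_0 \cup W_1$, as well as sets obtained by combining one side $U_k$ with a selection of one representative per $V$-pair), and it suffices to assign rational weights to a bounded number of automorphism orbits of such sets. Equivalently, by \eqref{eq:fdom-proba}, I would construct a random dominating set $\bD$ satisfying $\pr{v \in \bD} = \frac{2n-1}{5n-2}$ uniformly over $v \in V(G_n)$, built by symmetric randomised choices within each of the six classes.

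The principal obstacle will be pinning down which dominating sets are tight under the optimal $(\alpha,\beta,\gamma)$: this is what simultaneously anchors the minimisation in the dual and the construction in the primal. Although the symmetry collapses the LP to three variables, the combinatorial analysis of minimal dominating sets --- especially those that are asymmetric between $W_0$ and $W_1$ or that mix $V$- and $W$-choices on the $3$-paths --- requires careful bookkeeping to confirm that no subtle dominating set violates the bottleneck and that the exhibited primal weights indeed saturate every vertex at $\frac{2n-1}{5n-2}$.
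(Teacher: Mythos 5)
Your construction is exactly the paper's $G_n$ (the edge-union of ${K_{n,n}}^{1/3}$ and $2{K_n}^{1/2}$), and your overall strategy --- a class-symmetric fractional bottleneck for the upper bound, a symmetric random dominating set for the lower bound --- is also the paper's. But there is a genuine error in the dual accounting. You assert that a suspended $3$-path $w_i \link v^0 \link v^1 \link w'_j$ contributes at least $\beta$ to $w(D)$ ``unless at least one endpoint in $W_0 \cup W_1$ lies in $D$''. The correct statement is ``unless \emph{both} endpoints lie in $D$'': if, say, $w'_j \notin D$, then $N[v^1] = \{w'_j, v^0, v^1\}$, so dominating $v^1$ forces $v^0 \in D$ or $v^1 \in D$ regardless of whether $w_i \in D$. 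Hence a dominating set with $s = |D \cap W_0|$ and $t = |D \cap W_1|$ has $|D \cap V| \ge n^2 - st$, not merely $(n-s)(n-t)$; the difference $s(n-t)+t(n-s)$ counts the $3$-paths with exactly one endpoint in $D$. This is not cosmetic: under your weaker count the profile $(s,t)=(n,0)$ produces the constraint $n\alpha + \binom{n}{2}\gamma \ge 1$, which the true optimal weights $\alpha = \frac{1}{2n}$, $\beta = \gamma = \frac{1}{n(2n-1)}$ violate (they give $\frac{3n-2}{4n-2} < 1$). So minimising over your constraint system returns a value strictly larger than $\frac{5n-2}{2n-1}$ and the upper bound does not close. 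The extra forced $V$-vertices are exactly what makes the verification $w(D) \ge \frac{n(2n-1) + (s-t)^2/2}{n(2n-1)} \ge 1$ go through.

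On the primal side your plan is right in spirit but omits the hard ingredient: a mixture supported only on $W$ and on ``$U$ plus one representative per $V$-pair'' cannot saturate every vertex at $\frac{2n-1}{5n-2}$, since once the $W$- and $U$-vertices are handled those two families carry total probability only $\frac{4n-2}{5n-2} < 1$. The paper needs a third family --- uniformly random $\lfloor n/2 \rfloor$-subsets of $W_0$ and of $W_1$, completed greedily on $U$ and $V$ --- with mixture weights that depend on the parity of $n$, and verifying the $V$-vertices there requires a genuine computation. One small point in your favour: your observation that the girth is $6$ only for $n \ge 3$ is correct (for $n=2$ the shortest cycle uses one suspended $2$-path and two suspended $3$-paths and has length $8$), so you are in fact more careful than the statement here.
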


\begin{proof}
    We let $G_n$ be the edge-union of ${K_{n,n}}^{1/3}$ and $2{K_n}^{1/2}$ (see \Cref{fig:prop6} for an illustration of $G_4$). We claim that $\fdom(G_n) = \frac{5n-2}{2n-1}$.

    We partition $V(G_n)$ into three sets $U,V,W$, where $U$ is the set of vertices that come from the $1$-subdivision of an edge and $V$ is the set of vertices that come from the $2$-subdivision of an edge. So we have $|U|=2\binom{n}{2}=n(n-1)$, $|V|=2n^2$, and $|W|=2n$. Moreover, every vertex in $U\cup V$ has degree $2$, and every vertex in $W$ has degree $2n-1$. We moreover partition $W$ into $W_0,W_1$ that coincide with the two parts of $K_{n,n}$ before the subdivision, or equivalently to the vertex set of each copy of $K_n$ in $2K_n$. We have $|W_0|=|W_1|=n$.
    
    We let $w\colon V(G_n) \to [0,1]$ be defined by
    \[ w(v) \coloneqq \begin{cases}
        \frac{1}{2n} & \mbox{if } v \in W;\\
        \frac{1}{n(2n-1)} & \mbox{otherwise,}
        \end{cases}.\]
    \begin{claim}
        The weight function $w$ is a fractional bottleneck of $G_n$, of weight $\frac{5n-2}{2n-1}$.
    \end{claim}
    
    \begin{proofclaim}
    Let $D$ be a dominating set of $G$.
    Let $s\coloneqq |D\cap W_0|\le n$ and $t\coloneqq |D\cap W_1|\le n$. Then there are $\binom{n-s}{2}+\binom{n-t}{2}$ vertices in $U$ that are not dominated by $D\cap W$, hence $|D\cap U| \ge \binom{n-s}{2}+\binom{n-t}{2}$.
    The set $V \setminus N(D\cap W)$ induces a graph that consists of $s(n-t)+t(n-s)$ isolated vertices and $(n-s)(n-t)$ isolated copies of $K_2$. We infer that $|D\cap V|\ge n^2-st$.
    We have
    \begin{align*}
        w(D) &= w(D\cap U) + w(D\cap V) + w(D\cap W) \\
        & \ge \frac{\binom{n-s}{2}+\binom{n-t}{2}}{n(2n-1)} + \frac{n^2-st}{n(2n-1)} + \frac{s+t}{2n} \\
        &= \frac{n(n-1) - (s+t)(n-1/2) + (s^2+t^2)/2}{n(2n-1)} + \frac{n^2 - st}{n(2n-1)} + \frac{(s+t)(n-1/2)}{n(2n-1)}\\
        &= \frac{n(2n-1) + (s-t)^2/2}{n(2n-1)} \ge 1,\\
    \end{align*}
    so $w$ is indeed a fractional bottleneck of $G_n$. The weight of $w$ is $\frac{|W|}{2n} + \frac{|U|+|V|}{n(2n-1)} = \frac{5n-2}{2n-1}$.
    \end{proofclaim}

    To construct a fractional dominating colouring of $G_n$ of weight $\frac{5n-2}{2n-1}$, we introduce $3$ possible random outcomes for a random dominating set $\bD$ of $G_n$. 
    \begin{enumerate}[label=(\roman*)]
        \item We let $D_0 \coloneqq W$.
        \item We initially set $\bD_1 \gets U$, then for every copy of $K_2$ induced by $V$ in $G_n$, we add one of its two vertices in $\bD_1$, uniformly at random.
        \item We initially set $\bD_2 \gets \emptyset$. We respectively let $\bX_0$ and $\bX_1$ be $\lfloor n/2\rfloor$-subsets of $W_0$ and $W_1$, chosen uniformly at random. We add $\bX_0 \cup \bX_1$ to $\bD_2$, as well as $U\setminus N(\bX_0\cup \bX_1)$.
        The subgraph of $G_n$ induced by $V\setminus N(\bX_0\cup \bX_1)$ consists of copies of $K_1$ and $K_2$; we add to $\bD_2$ all vertices that belong to a copy of $K_1$, and one vertex chosen uniformly at random from each copy of $K_2$.
    \end{enumerate}
    We now define 
    \[ \bD \coloneqq \begin{cases}
        D_0 & \mbox{with probability} \begin{cases}
            \frac{1}{5n-2} & \mbox{if $n$ is even,}\\
            \frac{3n-1}{(n+1)(5n-2)} & \mbox{if $n$ is odd;}
        \end{cases}\vspace{6pt}\\ 
        \bD_1 & \mbox{with probability} \begin{cases}
            \frac{n+1}{5n-2} & \mbox{if $n$ is even,}\\
            \frac{n-1}{5n-2} & \mbox{if $n$ is odd;}
        \end{cases}\vspace{6pt}\\ 
        \bD_2 & \mbox{with probability} \begin{cases}
            \frac{4n-4}{5n-2} & \mbox{if $n$ is even,}\\
            \frac{4n^2}{(n+1)(5n-2)} & \mbox{if $n$ is odd.}
        \end{cases}
    \end{cases}\]
    \begin{claim}
        The random set $\bD$ is a fractional dominating colouring of $G_n$ of weigth $\frac{5n-2}{2n-1}$.
    \end{claim}

\begin{proofclaim}
    By construction, $D_0$, $\bD_1$, and $\bD_2$ are (random) dominating sets of $G_n$, therefore $\bD$ is a random dominating set of $G_n$. There remains to check that $\pr{v\in \bD}=\frac{2n-1}{5n-2}$ for every vertex $v\in V(G_n)$. 

    First, let $w\in W$. If $n$ is even, then 
    \begin{align*}
        \pr{w\in \bD} &= \frac{1}{5n-2}\, \pr{w\in D_0} + \frac{n+1}{5n-2}\, \pr{w\in \bD_1} + \frac{4n-4}{5n-2}\, \pr{w\in \bD_2}\\
        & = \frac{1}{5n-2} + 0 + \frac{4n-4}{5n-2}\cdot \frac{1}{2} = \frac{2n-1}{5n-2}.
    \end{align*}
    If $n$ is odd, then
    \begin{align*}
        \pr{w\in \bD} &= \frac{3n-1}{(n+1)(5n-2)}\, \pr{w\in D_0} + \frac{n-1}{5n-2}\, \pr{w\in \bD_1} + \frac{4n^2}{(n+1)(5n-2)}\, \pr{w\in \bD_2}\\
        &= \frac{3n-1}{(n+1)(5n-2)} + 0 + \frac{4n^2}{(n+1)(5n-2)}\cdot \frac{n-1}{2n} = \frac{2n-1}{5n-2}.
    \end{align*}
    Next, let $u\in U$. If $n$ is even, then
    \begin{align*}
        \pr{u\in \bD} &= \frac{1}{5n-2}\, \pr{u\in D_0} + \frac{n+1}{5n-2}\, \pr{u\in \bD_1} + \frac{4n-4}{5n-2}\, \pr{u\in \bD_2}\\
        & = 0+\frac{n+1}{5n-2} + \frac{4n-4}{5n-2}\cdot \frac{\binom{n/2}{2}}{\binom{n}{2}} = \frac{2n-1}{5n-2}.
    \end{align*}
    If $n$ is odd, then
    \begin{align*}
        \pr{u\in \bD} &= \frac{3n-1}{(n+1)(5n-2)}\, \pr{u\in D_0} + \frac{n-1}{5n-2}\, \pr{u\in \bD_1} + \frac{4n^2}{(n+1)(5n-2)}\, \pr{u\in \bD_2}\\
        &= 0 + \frac{n-1}{5n-2} + \frac{4n^2}{(n+1)(5n-2)}\cdot \frac{\binom{(n+1)/2}{2}}{\binom{n}{2}} = \frac{2n-1}{5n-2}.
    \end{align*}
    Finally, let $v\in V$. We write $N(v)=\{w,v'\}$ with $w\in W$ and $v'\in V$, and we let $\{w'\} \coloneqq  N(v')\cap W$. We have
    \begin{align*} 
    \pr{v\in \bD_2} &= \pr{w\notin \bD_2 \et w' \in \bD_2} + \frac{1}{2}  \cdot \pr{w\notin \bD_2 \et w' \notin \bD_2} \\
    &= \begin{cases}
        \frac{3}{8} & \mbox{if $n$ is even;}\\
        \frac{(n-1)(n+1)}{4n^2} + \frac{1}{2}\cdot \frac{(n+1)^2}{4n^2} & \mbox{if $n$ is odd.}
    \end{cases}
    \end{align*}
    So, if $n$ is even, one has
    \begin{align*}
        \pr{v\in \bD} &= \frac{1}{5n-2}\, \pr{v\in D_0} + \frac{n+1}{5n-2}\, \pr{v\in \bD_1} + \frac{4n-4}{5n-2}\, \pr{v\in \bD_2}\\
        & = 0 + \frac{n+1}{5n-2}\cdot\frac{1}{2} + \frac{4n-4}{5n-2}\cdot \frac{3}{8} = \frac{2n-1}{5n-2}.
    \end{align*}
    Otherwise, if $n$ is odd, one has
    \begin{align*}
        \pr{v\in \bD} &= \frac{3n-1}{(n+1)(5n-2)}\, \pr{v\in D_0} + \frac{n-1}{5n-2}\, \pr{v\in \bD_1} + \frac{4n^2}{(n+1)(5n-2)}\, \pr{v\in \bD_2}\\
        &= 0 + \frac{n-1}{5n-2} \cdot\frac{1}{2} + \frac{4n^2}{(n+1)(5n-2)} \pth{\frac{(n-1)(n+1)}{4n^2} + \frac{1}{2}\cdot \frac{(n+1)^2}{4n^2}} = \frac{2n-1}{5n-2}.
    \end{align*}
    This covers all possible cases, so $\bD$ is a fractional dominating colouring of $G_n$ of weight $\frac{5n-2}{2n-1}$.
\end{proofclaim}

We conclude that $\fdom(G_n) = \frac{5n-2}{2n-1}$.
\end{proof}

\section{Lower bound on \texorpdfstring{$\fdom$}{FDOM} of graphs with minimum degree 2}
\label{sec:lower-bounds}

\subsection{Preliminary definitions and notations}

Let $f\colon V(G) \to [0,1]$ be a \emph{demand function} on $G$.
For every real number $r$ such that $0<r\le 1$, an \emph{$f$-dominating $r$-colouring} of $G$ is a random subset $\bD \subseteq V(G)$ such that for every vertex $v\in V(G)$,
    \[
        \pr{N[v] \cap \bD \neq \emptyset} \ge f(v)  \mbox{ and } \pr{v\in \bD} = r.
    \] 
When $f(v)=1$ for every vertex $v\in V(G)$, we simply refer to it as a \emph{dominating $r$-colouring}.
Let $p,q$ be integers such that $0<q \le p$. An \emph{$f$-dominating $(p:q)$-colouring} of $G$ is a mapping $\phi\colon V(G) \to \binom{[p]}{q}$ such that $|\phi(N[v])| \ge f(v)p$ for every vertex $v\in V(G)$. 
We will also refer to them as \emph{partially dominating colourings} as they can be understood as generalisations of dominating colourings where each vertex $v$ is ``partially dominated'', i.e. $v$ only has to see a proportion $f(v)$ of the total number of colours in its closed neighbourhood.

\subsection{Technical tools}

The two notions of partially dominating colourings defined above are closely related.
\begin{lemma}
    \label{lem:set-to-fraction}
    Let $G$ be a graph, $f$ a demand function on $G$, and $q,p$ integers such that $0< q\le p$. 
    If there exists an $f$-dominating $(p:q)$-colouring $\phi$ of $G$, then there exists an $f$-dominating $q/p$-colouring of $G$.
\end{lemma}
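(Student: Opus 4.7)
The plan is to construct the random dominating set directly from $\phi$ by choosing a single colour uniformly at random from $[p]$ and taking the preimage.

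Concretely, I would let $\bm{c}$ be drawn uniformly at random from $[p]$ and define
\[
    \bD \coloneqq \sst{v \in V(G)}{\bm{c} \in \phi(v)}.
\]
Then for every vertex $v \in V(G)$, since $\card{\phi(v)} = q$, we get
\[
    \pr{v \in \bD} = \pr{\bm{c} \in \phi(v)} = \frac{q}{p},
\]
as required by the first condition. For the second condition, observe that $N[v] \cap \bD \neq \emptyset$ if and only if there exists some $u \in N[v]$ with $\bm{c} \in \phi(u)$, which is equivalent to $\bm{c} \in \phi(N[v])$. Hence
\[
    \pr{N[v] \cap \bD \neq \emptyset} = \pr{\bm{c} \in \phi(N[v])} = \frac{\card{\phi(N[v])}}{p} \ge \frac{f(v)\, p}{p} = f(v),
\]
using the hypothesis that $\phi$ is an $f$-dominating $(p:q)$-colouring. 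This shows $\bD$ is an $f$-dominating $q/p$-colouring of $G$.

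There is essentially no obstacle here: the statement is a direct translation between the set-valued and probabilistic points of view on fractional colourings, and the uniform choice of a single colour provides the canonical bridge. The only thing to double-check is that both conditions (the exact equality $\pr{v \in \bD} = q/p$ and the lower bound on $\pr{N[v]\cap\bD\neq\emptyset}$) follow from the two defining properties of $\phi$, which they do in one line each.
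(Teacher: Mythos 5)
Your proposal is correct and is essentially identical to the paper's proof: the paper likewise draws a colour $i$ uniformly from $[p]$ and takes $\bD$ to be the corresponding colour class, then verifies the two conditions exactly as you do. No differences worth noting.
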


\begin{proof}
    Let $i$ be drawn uniformly at random from $[p]$, and let $\bD \coloneqq \phi^{-1}(\{i\})$. For every vertex $v$, we have $\pr{v\in\bD} = |\phi(v)|/p = q/p$, and $\pr{N[v]\cap \bD\neq \emptyset} = \frac{|\phi(N[v])|}{p} \ge f(v)$.
    We conclude that $\bD$ is the desired $f$-fractional $q/p$-colouring of $G$.
\end{proof}

In the definition of a partially dominating $r$-colouring, we require equality for the probability $r$ that a vertex is in the random set. This equality helps us inductively build partially dominating $r$-colourings but can be annoying to ensure. 
The following observation shows that, when building a partially dominating $r$-colouring, we may replace that equality constraint with an inequality.
Intuitively, domination constraints cannot be violated if we add additional colours to vertices with fewer colours so that every vertex is assigned a fraction $r=q/p$ of the total number of colours.

\begin{lemma}
    \label{lem:weakdomination}
    Let $G$ be a graph, $f$ a demand function on $G$, and $r$ a real number such that $0<r\le 1$.
    If there exists $\bD$ such that $\pr{N[v] \cap \bD \neq \emptyset} \ge f(v)$ and $\pr{v\in \bD} \le r$ for every vertex $v\in V(G)$,
    then there exists an $f$-dominating $r$-colouring $\bD'$ of $G$.
\end{lemma}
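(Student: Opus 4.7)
The natural plan is to augment $\bD$ so that the probability of each vertex being in the set is exactly $r$, without decreasing any domination probability. Since adding vertices to a random set can only increase the probability that a closed neighbourhood is hit, the idea is to include each vertex $v$ (independently) with some additional probability chosen so that the marginal $\pr{v \in \bD'}$ equals $r$ precisely.

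Formally, I would proceed as follows. For each $v \in V(G)$, let
\[ q_v \coloneqq \begin{cases} \dfrac{r - \pr{v \in \bD}}{1 - \pr{v \in \bD}} & \text{if } \pr{v \in \bD} < 1, \\ 0 & \text{if } \pr{v \in \bD} = 1. \end{cases}\]
Since $0 \le \pr{v \in \bD} \le r \le 1$, we have $q_v \in [0,1]$ (note that $\pr{v \in \bD} = 1$ forces $r = 1$, in which case nothing needs to be added). On an enlarged probability space, let $(\bZ_v)_{v \in V(G)}$ be independent Bernoulli variables with $\pr{\bZ_v = 1} = q_v$, independent also from $\bD$, and define
\[ \bD' \coloneqq \bD \cup \sst{v \in V(G)}{\bZ_v = 1}. \]

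The verification is then routine. On the one hand, by a direct computation,
\[ \pr{v \in \bD'} = \pr{v \in \bD} + \pr{v \notin \bD}\, q_v = r, \]
using the definition of $q_v$ (and the case $\pr{v \in \bD} = 1$ is immediate). On the other hand, $\bD \subseteq \bD'$ pointwise, so $\pr{N[v] \cap \bD' \neq \emptyset} \ge \pr{N[v] \cap \bD \neq \emptyset} \ge f(v)$ for every $v \in V(G)$. Hence $\bD'$ is an $f$-dominating $r$-colouring of $G$.

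There is essentially no obstacle here; the only point requiring care is the degenerate case $\pr{v \in \bD} = 1$, handled above, and the fact that one must work on a joint probability space carrying both $\bD$ and the independent Bernoullis $(\bZ_v)$ — this is done by passing to the product space, which does not affect the marginal distribution of $\bD$ or the domination probabilities already guaranteed.
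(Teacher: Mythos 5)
Your proposal is correct and follows essentially the same route as the paper: augment $\bD$ with independently added vertices, each with probability $\frac{r-\pr{v\in\bD}}{1-\pr{v\in\bD}}$, so that the marginal becomes exactly $r$ while monotonicity preserves the domination probabilities. The only (harmless) cosmetic difference is that you draw the Bernoulli variables independently of $\bD$ on a product space rather than conditioning on $v\notin\bD$, and you make the degenerate case $\pr{v\in\bD}=1$ explicit, which the paper leaves implicit.
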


\begin{proof}
    We construct a random set $\bX$ by taking each vertex $v\notin \bD$ independently with probability $\frac{r - \pr{v\in \bD}}{1-\pr{v\in \bD}}$.
    Let $\bD'$ be the union of $\bD$ and $\bX$.
    By construction, $\bD \subseteq \bD'$, so we have $\pr{N[v] \cap \bD' \neq \emptyset} \ge f(v)$ for every vertex $v\in V(G)$.
    Moreover,
    \[
        \pr{v\in \bD'} = \pr{v\in \bD} + \pr{v\in \bX} 
        = \pr{v\in \bD} + \pth{1-\pr{v\in \bD}} \frac{r - \pr{v\in \bD}}{1-\pr{v\in \bD}}
        = r.
        \qedhere
    \]
\end{proof}

Consider a graph with two blocks, i.e. maximal $2$-connected subgraphs, glued on a common cut-vertex. If each block has a colouring with the same number of colours on each vertex, then we can combine the two colourings by permuting the colours to match them on the cut-vertex and get a colouring of the whole graph. 
Moreover, we can choose a permutation that also maximises the proportion of colours appearing in the neighbourhood of the cut-vertex.
As a consequence, two partially dominating colourings, one for each block, where the only vertex that is partially dominated is the cut-vertex, can combine to form a dominating colouring of the whole graph.
Formally, we prove the following more general statement.

\begin{lemma}
\label{lem:gluing}
    Let $G$ be a graph, $(G_0,G_1)$ a separation of order $1$ of $G$, and $v\in V(G)$ such that $V(G_0\cap G_1) = \{v\}$.
    Let $f_0,f_1$ be demand functions on $G_0$ and $G_1$ respectively. 
    Let $r$ be a real number such that $0<r\le 1$.
    If every $G_i$ has an $f_i$-dominating $r$-colouring $\bD_i$, then $G$ has an $f$-dominating $r$-colouring $\bD$, where
    \[ \begin{cases}
        f(v) \coloneqq \min \left\{1,  f_0(v)+f_1(v)-r\right\} &\mbox{and}\\
        f(w) \coloneqq f_i(w) &\mbox{if $w\in V(G_i) \setminus \{v\}$.}
    \end{cases}\]
\end{lemma}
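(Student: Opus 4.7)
The plan is to realise $\bD$ as the union $\bD_0 \cup \bD_1$ for a suitably coupled pair $(\bD_0, \bD_1)$, each marginally distributed as in the hypothesis. Since $\pr{v \in \bD_0} = \pr{v \in \bD_1} = r$, I will first couple them so that the events $\{v \in \bD_0\}$ and $\{v \in \bD_1\}$ coincide; call this common event $B$, with $\pr{B} = r$. This already forces $\pr{v \in \bD} = r$, and for every $w \in V(G_i) \setminus \{v\}$ the separation property yields $N_G[w] \subseteq V(G_i)$, hence $\pr{w \in \bD} = \pr{w \in \bD_i} = r$ and $\pr{N[w] \cap \bD \neq \emptyset} \ge \pr{N_{G_i}[w] \cap \bD_i \neq \emptyset} \ge f_i(w)$. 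So the constraints at every vertex other than $v$ are immediate.

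The substantive point is to lower-bound $\pr{N[v] \cap \bD \neq \emptyset}$. Writing $A_i \coloneqq \{N_{G_i}[v] \cap \bD_i \neq \emptyset\}$, I aim to choose the coupling so as to maximise $\pr{A_0 \cup A_1}$. Since $B \subseteq A_i$, the conditional probability $p_i \coloneqq \pr{A_i \mid B^c}$ equals $(\pr{A_i} - r)/(1-r)$. Conditional on $B^c$, I will use the standard minimum-overlap coupling of the two Bernoulli indicators of $A_0$ and $A_1$, lifted to a coupling of $\bD_0$ and $\bD_1$ by decomposing each conditional marginal law of $\bD_i$ (given $B^c$) into its restrictions to $A_i$ and to its complement. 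This achieves $\pr{A_0 \cap A_1 \mid B^c} = \max\{0, p_0 + p_1 - 1\}$, hence $\pr{A_0 \cup A_1 \mid B^c} = \min\{1, p_0 + p_1\}$.

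A short computation then yields
\[
\pr{A_0 \cup A_1} \;=\; r + (1-r)\min\{1, p_0 + p_1\} \;=\; \min\!\left\{1,\; \pr{A_0} + \pr{A_1} - r\right\} \;\ge\; \min\!\left\{1,\; f_0(v) + f_1(v) - r\right\} \;=\; f(v),
\]
which completes the verification that $\bD$ is an $f$-dominating $r$-colouring. The main conceptual point is that the equal membership probabilities at $v$ give just enough freedom to align the events $\{v \in \bD_i\}$, while the domination events at $v$ still have enough slack (conditional on $B^c$) to be made as disjoint as possible; the only mildly technical step is the measure-theoretic lift of the Bernoulli coupling, which I expect to be routine.
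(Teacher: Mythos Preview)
Your argument is correct and is essentially the same construction as the paper's: couple $\bD_0$ and $\bD_1$ so that the events $\{v\in\bD_0\}$ and $\{v\in\bD_1\}$ coincide, and on the complement arrange the two ``$v$ is dominated'' events to overlap as little as possible. The paper carries this out by an explicit two-case mixture (according to whether $\pr{B_0}+\pr{B_1}\gtrless 1-r$, in its notation), which is exactly your minimum-overlap Bernoulli coupling written out by hand; your packaging via coupling language is a bit more streamlined, at the cost of leaving the routine ``lift'' implicit.
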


\begin{proof}
    For every $i\in \{0,1\}$, let $A_i$ be the event $v\in \bD_i$, $B_i$ the event $v\notin \bD_i$ and $N_{G_i}(v) \cap \bD_i \neq \emptyset$, and $C_i$ the event $N_{G_i}[v]\cap \bD_i = \emptyset$ --- observe that the events $A_i, B_i, C_i$ form a partition of the probability space.
    Moreover, $(\pr{A_0}+\pr{B_0})+\pr{B_1} = \pr{N_{G_0}[v] \cap \bD_0 \ne \emptyset} + (\pr{N_{G_1}[v] \cap \bD_1 \ne \emptyset}-\pr{v\in \bD_1}) \ge f_0(v)+f_1(v)-r$.
    
    For every $X\in \{A,B,C\}$ and every $i\in\{0,1\}$, we let $\bD_i^X$ be $\bD_i$ conditioned on the event $X_i$.
    Observe that
    \begin{align*}
        \pr{N_{G_i}[v]\cap \bD_i^A\neq \emptyset} & = \pr{N_{G_i}[v]\cap \bD_i\neq \emptyset \mid v\in \bD_i} = 1,\\
        \pr{N_{G_i}[v]\cap \bD_i^B\neq \emptyset} & = \pr{N_{G_i}[v]\cap \bD_i\neq \emptyset \mid v\notin \bD_i \et N_{G_i}(v)\cap \bD_i \neq \emptyset} = 1, \mbox{ and}\\
        \pr{N_{G_i}[v]\cap \bD_i^C\neq \emptyset} & = \pr{N_{G_i}[v]\cap \bD_i\neq \emptyset \mid N_{G_i}[v]\cap \bD_i = \emptyset} = 0.
    \end{align*}
    
    \paragraph{Case 1.} Suppose that $\pr{B_0}+\pr{B_1} \ge 1-r$.
    Observe that $r+\pr{C_0}+\pr{C_1} = r + (1-\pr{A_0}-\pr{B_0}) + (1-\pr{A_1}-\pr{B_1}) = 2 - r-\pr{B_0}-\pr{B_1} \le 1$.
    Let 
    \[\bD \coloneqq \begin{cases}
        \bD_0^{A} \cup \bD_1^{A} & \mbox{with probability $r$,}\\
        \bD_0^{C} \cup \bD_1^{B} & \mbox{with probability $\pr{C_0}$,}\\
        \bD_0^{B} \cup \bD_1^{C} & \mbox{with probability $\pr{C_1}$, and}\\
        \bD_0^{B} \cup \bD_1^{B} & \mbox{otherwise.}\\
        \end{cases}\]
    By construction, $\pr{v\in \bD}=r$.
    Moreover,
    \begin{align*}
    \pr{N_G[v]\cap \bD \neq \emptyset} 
    &= r\,\pr{N_G[v]\cap (\bD_0^A\cup \bD_1^A)\neq \emptyset}
    +\pr{C_0}\pr{N_G[v]\cap (\bD_0^C\cup \bD_1^B)\neq \emptyset}\\
    &~~~+ \pr{C_1}\pr{N_G[v]\cap (\bD_0^B\cup \bD_1^C)\neq \emptyset}\\
    &~~~+(1-r-\pr{C_0}-\pr{C_1})\,\pr{N_G[v]\cap (\bD_0^B\cup \bD_1^B)\neq \emptyset}\\
    &\ge r\,\pr{N_{G_0}[v]\cap \bD_0^A\neq \emptyset}
    + \pr{C_0}\pr{N_{G_1}[v]\cap \bD_1^B\neq \emptyset}\\
    &~~~+ \pr{C_1}\pr{N_{G_0}[v]\cap \bD_0^B\neq \emptyset}\\
    &~~~+ (1-r-\pr{C_0}-\pr{C_1})\,\pr{N_{G_0}[v]\cap \bD_0^B\neq \emptyset}\\
    &\ge r + \pr{C_0} + \pr{C_1} + 1-r-\pr{C_0}-\pr{C_1}\\
    &\ge 1 \ge f(v).
    \end{align*}
    Observe that, for each $i\in \{0,1\}$, by construction, $\bD \cap V(G_i)$ follows the same probability distribution as $\bD_i$. Hence $\bD$ induces in particular an $f_i$-dominating $r$-colouring of $G_i$. This ends the proof of Case~1.
    
        \paragraph{Case 2.} Suppose on the contrary that $\pr{B_0}+\pr{B_1} < 1-r$.
    Let 
    \[\bD \coloneqq \begin{cases}
        \bD_0^{A} \cup \bD_1^{A} & \mbox{with probability $r$,}\\
        \bD_0^{B} \cup \bD_1^{C} & \mbox{with probability $\pr{B_0}$,}\\
        \bD_0^{C} \cup \bD_1^{B} & \mbox{with probability $\pr{B_1}$, and}\\
        \bD_0^{C} \cup \bD_1^{C} & \mbox{otherwise.}\\
        \end{cases}\]
    By construction, $\pr{v\in \bD}=r$. Moreover, a computation similar to that performed in Case~1 yields that
    \begin{align*}
    \pr{N_G[v]\cap \bD \neq \emptyset} 
    &\ge r\,\pr{N_{G_0}[v]\cap \bD_0^A\neq \emptyset}
    + \pr{B_0}\pr{N_{G_0}[v]\cap \bD_0^B\neq \emptyset}\\
    &~~~+ \pr{B_1}\pr{N_{G_1}[v]\cap \bD_1^B\neq \emptyset}\\
    &~~~+ (1-r-\pr{B_0}-\pr{B_1})\,\pr{N_G[v]\cap (\bD_0^C\cup \bD_1^C)\neq \emptyset}\\
    &\ge r + \pr{B_0} + \pr{B_1}\\
    &\ge f_0(v) + f_1(v) - r = f(v).
    \end{align*}
    As in Case~1, for each $i\in \{0,1\}$, $\bD \cap V(G_i)$ follows the same probability distribution as $\bD_i$. Hence $\bD$ induces in particular an $f_i$-dominating $r$-colouring of $G_i$. This ends the proof of Case~2.
\end{proof}

Consider a separation $(G',H)$ of $G$ of order 2 with $V(G'\cap H) = \{u,v\}$. Let $\phi'$ be a fractional dominating colouring of $G'$. Like the previous case, if $H$ is colourable, then we look for a way to combine the colourings of $G'$ and $H$ to get one fractional dominating colouring of the whole graph. However, this combination requires that the number of colours in common between $u$ and $v$ is the same for both colourings. Let us assume that $|\phi'(u)\cap\phi'(v)| = \alpha |\phi'(u)|$, for some $\alpha \in [0,1]$. 
If $H$ has two colourings $\phi_0$ and $\phi_1$ such that $\phi_0(u) \cap \phi_0(v) = \emptyset$ and $\phi_1(u)=\phi_1(v)$, then we can produce a colouring $\phi$ of $H$ such that $|\phi(u)\cap\phi(v)|=\alpha|\phi(u)|$; this is obtained by letting a proportion $\alpha$ of the colours of $\phi$ be copies of colours of $\phi_1$, and a proportion $(1-\alpha)$ be copies of colours of $\phi_0$.
Then, one can obtain $\widetilde{\phi}'$ from $\phi'$ by taking $|\phi(u)|$ copies of each colour class, and similarly one obtains $\widetilde{\phi}$ from $\phi$ by taking $|\phi'(u)|$ copies of each colour class.
Up to colour permutation, we now have $\widetilde{\phi}(u)=\widetilde{\phi}'(u)$ and $\widetilde{\phi}(v)=\widetilde{\phi}'(v)$, so $\widetilde{\phi}$ and $\widetilde{\phi}'$ can be combined together. We formalise this argument in the following lemma. 

\begin{lemma}
    \label{lem:extension}
    Let $G$ be a graph, $(G',H)$ a separation of order $2$ of $G$, and $u,v\in V(G)$ such that $V(G'\cap H) = \{u,v\}$.
    Let $f',h$ be demand functions on $G'$ and $H$ respectively.
    Let $r$ be a real number such that $0<r<1/2$.
    If $G'$ has an $f'$-dominating $r$-colouring $\bD'$ and $H$ has two $h$-dominating $r$-colourings $\bD_0,\bD_1$ such that 
    $u$ and $v$ belong to disjoint subsets of realisations of $\bD_0$ and to the exact same realisations of $\bD_1$,
    then $G$ has an $f$-dominating $r$-colouring $\bD$ where $f(w) = f'(w)$ if $w\in V(G')$ and $f(w)=h(w)$ otherwise.
\end{lemma}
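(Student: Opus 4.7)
The plan is to build $\bD$ by coupling $\bD'$ on $V(G')$ with a probabilistic mixture of $\bD_0$ and $\bD_1$ on $V(H)$, chosen so that the two colourings agree on the cut $\{u,v\}$. The key parameter is $\alpha \coloneqq \pr{u\in \bD' \et v\in \bD'}/r$, which lies in $[0,1]$ since $0 \le \pr{u,v\in\bD'} \le r$. Because $\pr{u\in \bD'} = \pr{v\in \bD'} = r$, an elementary computation gives the full joint distribution of $\bD' \cap \{u,v\}$: both vertices in with probability $\alpha r$, exactly one in with probability $(1-\alpha)r$ each, and neither in with probability $1 - 2r + \alpha r$.

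Next, I would define $\bD_H$ as the mixture that takes the value $\bD_1$ with probability $\alpha$ and $\bD_0$ with probability $1-\alpha$. Using the hypothesis that $u$ and $v$ lie in the same realisations of $\bD_1$ and in disjoint realisations of $\bD_0$, a direct calculation shows that $\bD_H \cap \{u,v\}$ has exactly the same distribution as $\bD' \cap \{u,v\}$. With the marginals on $\{u,v\}$ matched, I can couple $\bD'$ and $\bD_H$ so that $\bD' \cap \{u,v\} = \bD_H \cap \{u,v\}$ almost surely: sample $\bD'$, observe its trace on $\{u,v\}$, then sample $\bD_H$ from its conditional distribution given that trace (each of the four cases has positive probability on both sides thanks to $0<r<1/2$ and $\alpha \in [0,1]$, so the conditional distributions are well-defined). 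Setting $\bD \coloneqq \bD' \cup \bD_H$ then yields a random subset of $V(G)$ whose restriction to $V(G')$ has the law of $\bD'$ and whose restriction to $V(H)$ has the law of $\bD_H$.

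It remains to check the two required properties. For the marginal: $\pr{w\in\bD} = r$ for every $w \in V(G)$, since the coupling preserves each marginal. For the domination: if $w \in V(G')\setminus\{u,v\}$, then $N_G[w] \subseteq V(G')$, so $\pr{N_G[w]\cap\bD\ne\emptyset} = \pr{N_{G'}[w]\cap\bD'\ne\emptyset} \ge f'(w) = f(w)$, and analogously for $w \in V(H)\setminus\{u,v\}$ using the fact that $\bD_H$ is, conditionally in each branch, an $h$-dominating $r$-colouring of $H$ (so the mixture also satisfies the $h$-domination constraint everywhere in $V(H)\setminus\{u,v\}$). Finally, for $w\in\{u,v\}$, the coupling gives $N_{G'}[w]\cap\bD = N_{G'}[w]\cap\bD'$, hence $\pr{N_G[w]\cap\bD\ne\emptyset} \ge \pr{N_{G'}[w]\cap\bD'\ne\emptyset} \ge f'(w) = f(w)$.

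The only genuinely delicate step is the coupling argument, and this reduces entirely to the algebraic matching of the joint distribution of $\bD_H \cap \{u,v\}$ with that of $\bD' \cap \{u,v\}$ through the single parameter $\alpha$; once this is verified the rest is bookkeeping. The strict inequality $r<1/2$ is what guarantees that $\pr{u\notin \bD_0 \et v\notin \bD_0} = 1-2r > 0$, so that the mixture can realise the event $\{u,v\}\cap\bD_H = \emptyset$ and the coupling is well-defined in every one of the four cases.
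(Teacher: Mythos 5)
Your proposal is correct and is essentially the paper's own proof: the paper's explicit five-case construction with the auxiliary Bernoulli variable $\bi$ is exactly the coupling you describe, namely sampling the mixture $\alpha\,\bD_1+(1-\alpha)\,\bD_0$ conditionally on the trace of $\bD'$ on $\{u,v\}$, with the same parameter $\alpha=\pr{u\in\bD'\et v\in\bD'}/r$. (One small nit: when $\alpha\in\{0,1\}$ not all four traces have positive probability, so not every conditional law is defined --- but any trace of probability zero for $\bD_H$ also has probability zero for $\bD'$ and hence never needs to be conditioned on, so the coupling remains well-defined.)
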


\begin{proof}
    Let $\alpha \coloneqq \pr{u\in \bD' \mid v\in \bD'}$ and $\beta =\pr{u\notin \bD' \mid v\notin \bD'}$.
    Observe that
    \begin{align*}
    \pr{u\in \bD'\et v\in \bD'} &= \pr{u\in\bD'\mid v\in \bD'}\pr{v\in\bD'} &&= \alpha r,\\
    \pr{u\in \bD'\et v\notin \bD'} &=\pr{u\in\bD'}-\pr{u\in \bD' \et v\in \bD'} &&= (1-\alpha)r,\\
    \pr{u\notin \bD'\et v\in \bD'} &= \pth{1-\pr{u\in\bD'\mid v\in \bD'}}\pr{v\in\bD'} &&= (1-\alpha)r, \\
    \pr{u\notin \bD'\et v\notin \bD'} &= \pr{u\notin\bD'}-\pr{u\in \bD' \et v\notin \bD'} &&= 1-(2-\alpha)r,\\
    \beta &= \frac{\pr{u\notin \bD'\et v\notin \bD'}}{\pr{v\notin \bD'}} &&= \frac{1-(2-\alpha)r}{1-r}, \mbox{ and}\\
    \beta - \alpha &= \frac{1-(2-\alpha)r - \alpha(1-r)}{1-r} &&= (1-\alpha)\frac {1-2r}{1-r}.
    \end{align*}
    Since $r<1/2$, $\beta-\alpha\ge 0$ and $0\le \alpha/\beta \le 1$.
    Let $\bi \gets Ber(\alpha/\beta)$ be a random $(0,1)$-valued variable such that $\pr{\bi=1}=\alpha/\beta$.
    
    For every $i\in \{0,1\}$, let $\bD_i^u,\bD_i^v$ be $\bD_i$ conditioned on the events $u\in \bD_i$ and $v\in \bD_i$ respectively, $\bD_i^{\neg u},\bD_i^{\neg v}$ be $\bD_i$ conditioned on the events $u\notin \bD_i$ and $v\notin \bD_i$ respectively, and $\bD_i^{\neg u,v}$ be $\bD_i$ conditioned on the event ($u\notin \bD_i$ and $v\notin \bD_i$).
    By hypothesis, 
    $\bD_1^u = \bD_1^v$ and $\bD_1^{\neg u} = \bD_1^{\neg v}$.
    Now, let
    \[\bD \coloneqq \begin{cases}
    \bD' \cup \bD_1^{u} & \mbox{if $u\in \bD'$ and $v\in \bD'$,}\\
    \bD' \cup \bD_0^{u} & \mbox{if $u\in \bD'$ and $v\notin \bD'$,}\\
    \bD' \cup \bD_0^{v} & \mbox{if $u\notin \bD'$ and $v\in \bD'$,}\\
    \bD' \cup \bD_1^{\neg u} & \mbox{if $u\notin \bD'$ and $v\notin \bD'$ and $\bi = 1$, and}\\
    \bD' \cup \bD_0^{\neg u,v} & \mbox{if $u\notin \bD'$ and $v\notin \bD'$ and $\bi = 0$.}\\
    \end{cases}\]
    By definition of $\bD$, we have
    \begin{align*}
    \pr{u\in \bD} &= \pr{u\in \bD' \et v\in\bD'} + \pr{u\in \bD' \et v\notin \bD'} = \pr{u\in \bD'} = r;\\
    \pr{v\in \bD} &= \pr{u\in \bD' \et v\in\bD'} + \pr{u\notin \bD' \et v\in \bD'} = \pr{v\in \bD'} = r;
    \end{align*}
    and
    \begin{align*}
    \pr{N_G[u] \cap \bD\neq \emptyset} &\ge \pr{N_{G'}[u]\cap \bD'\neq \emptyset} \ge f'(u);\\
    \pr{N_G[v] \cap \bD\neq \emptyset} &\ge \pr{N_{G'}[v]\cap \bD'\neq \emptyset} \ge f'(v).
    \end{align*}
    For every vertex $w\in V(G') \setminus\{u,v\}$, 
    \begin{align*}
    \pr{w\in \bD} &= \pr{w\in\bD'} =r, \mbox{ and}\\
    \pr{N_G[w]\cap \bD\neq \emptyset} &\ge \pr{N_{G'}[w]\cap \bD'\neq \emptyset} \ge f'(w).
    \end{align*}
    
    By hypothesis,
    the events $u\in \bD_0$, $v\in\bD_0$, and ($u\notin \bD_0$ and $v\notin \bD_0$) form a partition of the probability space.
    Therefore,
    \[
    \pr{u\notin \bD_0 \et v \notin \bD_0} = 1-\pr{u\in \bD_0}-\pr{v\in \bD_0} = 1-2r.
    \]
    For every subset of vertices $W\in V(H)\setminus\{u,v\}$, we have
    \begin{align*}
    \pr{W\cap \bD \neq \emptyset} 
    &= \pr{W\cap \bD_1^{u} \neq \emptyset}\pr{u\in \bD'\et v\in \bD'}
    + \pr{W\cap \bD_0^{u} \neq \emptyset}\pr{u\in \bD'\et v\notin \bD'}\\ 
    &~~~+ \pr{W\cap \bD_0^{v} \neq \emptyset}\pr{u\notin \bD'\et v\in \bD'}\\
    &~~~+ \pr{W\cap \bD_1^{\neg u} \neq \emptyset}\pr{u\notin \bD'\et v\notin \bD' \et \bi = 1}\\
    &~~~+ \pr{W\cap \bD_0^{\neg u,v} \neq \emptyset}\pr{u\notin \bD'\et v\notin \bD' \et \bi = 0}\\
    &= \pr{W\cap \bD_1^{u} \neq \emptyset}\alpha r
    + \pr{W\cap \bD_0^{u} \neq \emptyset}(1-\alpha)r 
    + \pr{W\cap \bD_0^{v} \neq \emptyset}(1-\alpha)r\\
    &~~~+ \pr{W\cap \bD_1^{\neg u} \neq \emptyset}\beta(1-r)\frac{\alpha}{\beta}
    + \pr{W\cap \bD_0^{\neg u,v} \neq \emptyset}\beta(1-r) \pth{1-\frac{\alpha}{\beta}}\\
    &= \pr{W\cap \bD_1 \neq \emptyset \et u\in \bD_1}\alpha
    + \pr{W\cap \bD_0 \neq \emptyset \et u\in \bD_0}(1-\alpha)\\ 
    &~~~+ \pr{W\cap \bD_0 \neq \emptyset \et v\in \bD_0}(1-\alpha)
    + \pr{W\cap \bD_1 \neq \emptyset \et u\notin \bD_1}\alpha\\
    &~~~+ \pr{W\cap \bD_0 \neq \emptyset\et u\notin \bD_0 \et v\notin \bD_1}\frac{1-r}{1-2r}(\beta-\alpha)\\
    &= (1-\alpha)\pr{W\cap \bD_0 \neq \emptyset} + \alpha\pr{W\cap \bD_1 \neq \emptyset}.
    \end{align*}
    For every $w\in V(H)\setminus\{u,v\}$, if we take $W =\{w\}$, we obtain that
    \[
    \pr{w\in\bD} = (1-\alpha)\pr{w\in\bD_0} + \alpha\pr{w\in\bD_1} = (1-\alpha)r +\alpha r = r,
    \]
    and if we take $W = N_G[w]$, we obtain that
    \begin{align*}
    \pr{N_G[w]\cap\bD\neq\emptyset} &= (1-\alpha)\pr{N_{H}[w]\cap \bD_0 \neq \emptyset} + \alpha\pr{N_{H}[w]\cap \bD_1 \neq \emptyset}\\
    &\ge (1-\alpha)h(w) + \alpha h(w) = h(w).    
    \end{align*}
    This ends the proof.
\end{proof}

\subsection{Proof of \texorpdfstring{\Cref{thm:5/2}}{Theorem 2}}

The proof of \Cref{thm:5/2} proceeds by induction (by considering a minimal counterexample). Because of the excluded family of graphs $\B$, we need to make sure at each inductive step that we do not reach a graph in $\B$, which is ensured with a few reducible configurations that progressively exclude graphs from $\B$ as possible induced subgraphs.
The general case considers a suspended path $P$ of length at least $4$ in $G$ (if it exists) and applies induction on the graph $G' \coloneqq G\setminus P$ obtained by removing $P$ from $G$. 
The following lemma shows how to extend a dominating fractional colouring of $G'$ obtained inductively to $G$. 
We note that the requirement that the length of $P$ is at least $4$ is needed to ensure that we obtain a lower bound of (at least) $5/2$ for $\fdom(G)$, provided that $\fdom(G')\ge 5/2$.

\begin{lemma}
    \label{lem:strong-pathlemma}
    Let $G$ be a graph, and let $P$ be a suspended path of $G$ of length $\ell \ge 1$. Let $G'$ be obtained by removing the internal vertices of $P$ from $G$. Then, writing $k\coloneqq \ceil{\ell/3}$, one has
    \[\fdom(G)\ge \min\;\left\{ \frac{3k-1}{k}, \fdom(G')\right\}.\]
\end{lemma}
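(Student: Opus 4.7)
The plan is to invoke Lemma~\ref{lem:extension} with the order-$2$ separation $(G', H)$, where $H$ is the subgraph of $G$ induced by $V(P)$ and $V(G'\cap H) = \{u,v\}$ for the endpoints $u,v$ of $P$. First I would dispose of the case $k = 1$ (where $\ell \le 3$ and $(3k-1)/k = 2$): the bound is immediate from $\fdom(G) \ge 2$, which holds for every graph without isolated vertices (take a maximal independent set and its complement). For $k \ge 2$, let $r \coloneqq \max\{k/(3k-1),\, 1/\fdom(G')\}$, so $r \le 2/5 < 1/2$ as required by Lemma~\ref{lem:extension}. Since $1/r \le \fdom(G')$, a solution to \eqref{eq:LP-fdom} for $G'$ combined with Lemma~\ref{lem:weakdomination} yields a (fully) dominating $r$-colouring $\bD'$ of $G'$.

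The crux of the proof is to construct two $h$-dominating $r$-colourings $\bD_0, \bD_1$ of $H$, where $h(u) = h(v) = 0$ and $h(x) = 1$ for every internal vertex $x$ of $P$, such that $u$ and $v$ belong to disjoint realisations of $\bD_0$ and to identical realisations of $\bD_1$. The natural template is the cyclic $(3k-1:k)$-colouring $\phi(x_i) \coloneqq \{ik,\, ik+1,\,\ldots,\, ik+k-1\} \bmod (3k-1)$ for $i=0,\ldots,\ell$: three consecutive $k$-windows cover $\mathbb{Z}/(3k-1)\mathbb{Z}$ with a single duplicate, so internal vertices are dominated, and sampling a uniformly random colour gives $\pr{v\in \bD}=r$ at every $v \in V(H)$. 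A direct calculation of $\ell k \bmod (3k-1)$ shows that this template already yields $\bD_1$ when $\ell = 3k-1$ (one has $\phi(x_0) = \phi(x_\ell)$) and $\bD_0$ when $\ell \in \{3k-2, 3k\}$ (one has $\phi(x_0) \cap \phi(x_\ell) = \emptyset$).

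The two off-period cases — $\bD_0$ for $\ell = 3k-1$ and $\bD_1$ for $\ell \in \{3k-2, 3k\}$ — do not admit a clean single $(p{:}q)$-colouring; for instance, identifying $u$ and $v$ in the case $\ell = 3k-2$ yields $C_{3k-2}$, whose fractional domatic number $(3k-2)/k$ is strictly smaller than $(3k-1)/k$. I would handle these by an explicit randomised mixture of conditional dominating sets: first sample the joint status of $(u,v)$ with the prescribed marginal distribution enforced by $\bD_i$ (disjoint or identical), then, conditional on each status, sample a dominating subset of the internal portion of $P$ that perturbs the cyclic template only near the endpoints. Enforcing $\pr{x_j \in \bD_i} = r$ at each internal vertex then reduces to a finite linear feasibility problem that admits an explicit solution, as one can verify by hand in the base cases (e.g.\ for $\ell = 4, k = 2$, where in Case A one picks $\{u,v,x_1\}$ or $\{u,v,x_3\}$ uniformly, and in Case B one picks $\{x_2\}$ or $\{x_1,x_3\}$ with probabilities $2/3$ and $1/3$).

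Once $\bD', \bD_0, \bD_1$ are in hand, Lemma~\ref{lem:extension} assembles them into a fractional dominating $r$-colouring of $G$, which gives $\fdom(G) \ge 1/r = \min\{(3k-1)/k, \fdom(G')\}$, as claimed. The main obstacle is exhibiting $\bD_0, \bD_1$ uniformly in $k$ for the off-period lengths: the cyclic template yields exactly the wrong endpoint relation, and one must describe a universal perturbation for each of the two residue classes and check by a short computation that every marginal remains equal to $r$. I expect that, in both cases, modifying only a bounded number of vertices near $u$ and $v$ (while keeping the cyclic template in the interior) suffices to produce the desired mixture.
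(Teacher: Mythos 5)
Your overall architecture matches the paper's: treat $P$ as one side of an order-$2$ separation, build two $h$-dominating $r$-colourings $\bD_0,\bD_1$ of the path with demand $0$ at the endpoints and the prescribed endpoint correlations, and assemble everything with \Cref{lem:extension}. Your analysis of the cyclic sliding-window template is also correct: it yields $\bD_1$ for $\ell=3k-1$ and $\bD_0$ for $\ell\in\{3k-2,3k\}$, exactly as in the paper (where this template is the colouring $\mu$).

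The gap is in the remaining three cases, which are the crux of the lemma and which you leave unproved for general $k$. You assert that they ``do not admit a clean single $(p{:}q)$-colouring'', but this is false, and your justification via $\fdom(C_{3k-2})=(3k-2)/k$ does not apply: since $h(u_0)=h(u_\ell)=0$, identifying the endpoints produces a cycle in which one vertex need not be dominated, so there is no contradiction with the existence of such a $(3k-1:k)$-colouring. The paper in fact exhibits two further explicit $(3k-1:k)$-colourings that settle these cases: a period-$3$ block colouring $\lambda$ with classes $\{1,\dots,k\}$, $\{k+1,\dots,2k\}$, $\{2k,\dots,3k-1\}$ (the last two overlap in the colour $2k$, which is what makes the count come out to $3k-1$ rather than $3k$), giving $\bD_0$ for $\ell=3k-1$ and $\bD_1$ for $\ell=3k$; and a ``drifting'' variant $\nu$ giving $\bD_1$ for $\ell=3k-2$. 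Your alternative --- a mixture of conditional dominating sets obtained by perturbing the template near the endpoints --- is plausible, and your $\ell=4$, $k=2$ instance checks out, but you verify only that single base case and explicitly defer the uniform-in-$k$ construction to an ``expectation''. Until that perturbation is written down and its marginals and domination properties are verified for every $k$, the proof is incomplete precisely where the work lies.
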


\begin{proof}
    Let $\phi$ be a dominating $g$-colouring of $G'$, with $g\coloneqq \min\;\left\{ \frac{3k-1}{k}, \fdom(G')\right\}$.
    Let $V(P)\coloneqq\{u_0, u_1, \ldots, u_\ell\}$ with $u_0$ and $u_\ell$ the two endpoints of $P$.
    We will show that $\phi$ extends to a dominating $g$-colouring of $G$ with an application of \Cref{lem:extension}. To do so, we exhibit two 
    $h$-dominating $(3k-1:k)$-colourings $\phi_0$ and $\phi_1$ of $P$ such that $\phi_1(u_0)=\phi_1(u_\ell)$ and $\phi_0(u_0)\cap \phi_1(u_\ell)=\emptyset$, where $h(w)=1$ if $w$ is an internal vertex of $P$, and $h(u_0)=h(u_\ell)=0$.

    Let $\lambda, \mu, \nu$ be three different $h$-dominating $(3k-1:k)$-colourings of $P$ defined as follows.
    For every $i\in \{0,1,\dots,\ell\}$
    \begin{align*}
    \lambda(u_i) &\coloneqq \begin{cases}
        \{1, \ldots, k\} &\mbox{ if } i \bmod 3 = 0 ,\\
        \{k+1, \ldots, 2k\} &\mbox{ if } i \bmod 3 = 1 ,\\
        \{2k, \ldots, 3k-1\} &\mbox{ if } i \bmod 3 = 2 ;
    \end{cases} \\
    \mu(u_i) &\coloneqq \left\{\big((ik) \bmod (3k-1)\big)+1, \ldots, \left(\big((i+1)k-1\big) \bmod (3k-1)\right)+1 \right\}; \\
    \nu(u_i) &\coloneqq \begin{cases}
        \{j+1, \ldots, k+j\} &\mbox{ if } i = 3j,\\
        \{1, \ldots, j+1\} \cup \{k+j+1, \ldots, 2k-1\} &\mbox{ if }i = 3j+1 ,\\
        \{2k, \ldots, 3k-1\} &\mbox{ if } i = 3j +2.\\
    \end{cases}
    \end{align*}
    For each $\phi \in \{\lambda, \mu, \nu\}$, it is straightforward to check that $\phi$ is indeed an $h$-dominating $(3k-1:k)$-colouring of $P$, by observing that $|\phi(u)|=k$ for every $u\in V(P)$ and $\phi(u_{i-1}) \cup \phi(u_i) \cup \phi(u_{i+1}) = [3k-1]$ for every $0<i<\ell$.

    Since $\ceil{\ell/3} = k$, we have the three following cases.
    \begin{itemize}
        \item If $\ell = 3k-2$, then let $\phi_0 \coloneqq \lambda$ and $\phi_1 \coloneqq \nu$.
        Observe that $\lambda(u_0) \cap \lambda(u_{3k-2}) = \emptyset$ and $\nu(u_0) = [k] = \nu(u_{3k-2})$.
        \item If $\ell = 3k-1$, then let $\phi_0 \coloneqq \lambda$ and $\phi_1 \coloneqq \mu$.
        Observe that $\lambda(u_0) \cap \lambda(u_{3k-1}) = \emptyset$ and $\mu(u_0) = [k] = \mu(u_{3k-1})$.
        \item If $\ell = 3k$, then let $\phi_0 \coloneqq \mu$ and $\phi_1 \coloneqq \lambda$.
        Observe that $\mu(u_0) = [k]$, $\mu(u_{3k}) = \{k+1,\dots,2k\}$, and $\lambda(u_0) = [k] = \lambda(u_{3k-1})$.
    \end{itemize}
    Let $\bD_i$ be a random colour class of $\phi_i$ for each $i\in \{0,1\}$. By construction, those are $h$-dominating $\frac{k}{3k-1}$-colourings of $P$ such that 
    $u$ and $v$ belong to disjoint subsets of realisations of $\bD_0$ and to the exact same realisations of $\bD_1$.
    An application of \Cref{lem:extension} concludes the proof.
\end{proof}

This leads us to a base case where all suspended paths have length at most $3$, in which case we describe explicitly how to obtain directly a dominating $2/5$-colouring. By consideration of all the reducible configurations, the base case is as follows.

\begin{figure}[htbp]
    \centering
    \begin{subfigure}[t]{0.39\textwidth}
        \centering
            \begin{tikzpicture}[every node/.style={shape=circle,draw=black,fill,inner sep=0pt,minimum size=6pt}, every edge/.style={line width=1.25pt,draw,black}]
                \node[label=below:{$u$}] (left) {};
                \node[label=below:{$v$}, right =1cm of left] (mid) {};
                \node[label=below:{$w$}, right =1cm of mid] (right) {};
                \coordinate[left=0.5cm of left] (leftmid);
                \coordinate[above left=0.5cm of left] (leftup);
                \coordinate[below left=0.5cm of left] (leftdown);

                \draw[dashed, line width=1.5pt] (left) -- (leftup);
                \draw[dashed, line width=1.5pt] (left) -- (leftmid);
                \draw[dashed, line width=1.5pt] (left) -- (leftdown);

                \coordinate[right=0.5cm of right] (rightmid);
                \coordinate[above right=0.5cm of right] (rightup);
                \coordinate[below right=0.5cm of right] (rightdown);

                \draw[dashed, line width=1.5pt] (right) -- (rightup);
                \draw[dashed, line width=1.5pt] (right) -- (rightmid);
                \draw[dashed, line width=1.5pt] (right) -- (rightdown);
                
                \draw (mid) edge[draw=red] (right);
                \draw (left) edge[bend right,draw=red] (mid);
                \draw (left) edge[bend left, draw=red] (mid);
            \end{tikzpicture}
        \caption{A neighbourhood in $H$.}
        \label{fig:C5P3subdivision:base}
    \end{subfigure}\hfill
    \begin{subfigure}[t]{0.6\textwidth}
            \centering
            \begin{tikzpicture}[every node/.style={shape=circle,draw=black,fill,inner sep=0pt,minimum size=6pt}, every edge/.style={line width=1.25pt,draw,black}]
                \node[label=below:{$u$}] (left) {};
                \node[label=below:{$v$}, right =2cm of left] (right) {};
                \node[label=below:{$w$}, right =2cm of right] (end) {};
                \coordinate[] (mid) at($(left)!0.5!(right)$);
                \coordinate[] (midleft) at($(mid)!0.5!(left)$);
                \coordinate[] (midright) at($(mid)!0.5!(right)$);
                
                \node[draw=red, fill=red, below=0.5cm of midleft] (v1) {};
                \node[draw=red, fill=red, below =0.5cmof midright] (v2) {};
                \node[draw=red, fill=red, above =0.5cmof mid] (v3) {};
                \node[draw=red, fill=red] (v4) at($(right)!0.5!(end)$) {};

                \coordinate[left=0.5cm of left] (leftmid);
                \coordinate[above left=0.5cm of left] (leftup);
                \coordinate[below left=0.5cm of left] (leftdown);

                \draw[dashed, line width=1.5pt] (left) -- (leftup);
                \draw[dashed, line width=1.5pt] (left) -- (leftmid);
                \draw[dashed, line width=1.5pt] (left) -- (leftdown);

                \coordinate[right=0.5cm of end] (rightmid);
                \coordinate[above right=0.5cm of end] (rightup);
                \coordinate[below right=0.5cm of end] (rightdown);

                \draw[dashed, line width=1.5pt] (end) -- (rightup);
                \draw[dashed, line width=1.5pt] (end) -- (rightmid);
                \draw[dashed, line width=1.5pt] (end) -- (rightdown);

                \path[] (left) edge[draw=red] (v3);
                \path[] (v3) edge[draw=red] (right);
                \path[] (v2) edge[draw=red] (right);
                \path[] (v2) edge[draw=red] (v1);
                \path[] (left) edge[draw=red] (v1);
                \path[] (right) edge[draw=red] (v4);
                \path[] (v4) edge[draw=red] (end);
            \end{tikzpicture}
        \caption{The corresponding neighbourhood in $G$ after subdivision.}
        \label{fig:C5P3subdivision:C5}
    \end{subfigure}\hfill
%






                
    \caption{The subdivision rules from $H$ to $G$ in the statement of \Cref{lem:5/2}.}
    \label{fig:C5P3subdivision}
\end{figure}
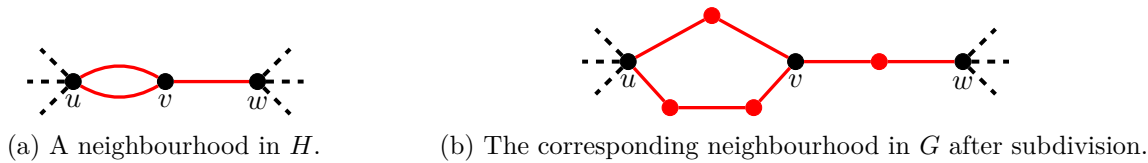

\begin{lemma}\label{lem:5/2}
    Let $H$ be a multigraph of minimum degree at least $3$ and multiplicity at most $2$. Let $G$ be obtained from $H$ by subdividing each simple edge into a suspended $2$-path and each double edge into a hammock.
    Then $\fdom(G) \ge 5/2.$
\end{lemma}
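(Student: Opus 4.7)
The plan is to construct a dominating $(5m:2m)$-colouring of $G$ for some integer $m \geq 1$ that may depend on $H$; by \Cref{lem:set-to-fraction}, this suffices to prove $\fdom(G)\geq 5/2$. The strategy is to reduce the task to choosing a labelling $\phi\colon V(H)\to\binom{[5m]}{2m}$ of the vertices of $H$ satisfying certain \emph{local overlap conditions} on each edge, and then to fill in the colours of the subdivision vertices deterministically.

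The first step is to analyse these local constraints. For a simple edge $uv$ of $H$ subdivided into $u\link x\link v$, the domination of $x$ requires $\phi(u)\cup\phi(x)\cup\phi(v)=[5m]$, which is feasible (with $|\phi(x)|=2m$) exactly when $|\phi(u)\cap\phi(v)|\leq m$. For a double edge $uv$ replaced by a hammock consisting of a 2-path $u\link x\link v$ and a 3-path $u\link a\link b\link v$, the same $x$-constraint again demands $|\phi(u)\cap\phi(v)|\leq m$, and the constraints at $a$ and $b$ force $\phi(a)\cup\phi(b)\supseteq[5m]\setminus(\phi(u)\cap\phi(v))$. Since $|\phi(a)\cup\phi(b)|\leq 4m$, this is feasible only when $|\phi(u)\cap\phi(v)|\geq m$, hence $|\phi(u)\cap\phi(v)|=m$ exactly. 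Thus everything boils down to constructing $\phi$ with $|\phi(u)\cap\phi(v)|\leq m$ on every simple edge and $|\phi(u)\cap\phi(v)|=m$ on every double edge. Once such a labelling is produced, each subdivision colour can be chosen explicitly. Moreover, the domination constraint at each $u\in V(H)$ follows ``for free'': for any edge $uv$ incident to $u$, a straightforward check shows that $\phi(v)\cup\phi(x_{uv})\supseteq[5m]\setminus\phi(u)$, and since $\deg_H(u)\geq 3$ such an edge always exists. This also explains why the hypothesis $\delta(H)\geq 3$ is used in the statement.

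The main task, and the main obstacle, is therefore to establish the existence of $\phi$ for every multigraph $H$ with $\delta(H)\geq 3$ and multiplicity at most $2$. I would approach this in two phases. First, I would fix $\phi$ on each pair of endpoints of a double edge so that $|\phi(u)\cap\phi(v)|=m$ holds exactly; since double edges form a graph of maximum degree at most $\lfloor\Delta(H)/2\rfloor$ (because each vertex has at most $\deg_H(u)/2$ double-edge neighbours), this local sub-problem can be solved edge by edge while leaving enough freedom at each vertex. Second, on the residual simple-edge subgraph, I would use a randomised completion of $\phi$: pick the remaining labels uniformly and independently from $\binom{[5m]}{2m}$, and control the overlaps with a Chernoff/Lovász Local Lemma argument. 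For $m$ large enough in terms of $\Delta(H)$, the expected overlap across a simple edge is $4m/5<m$, and the strict inequality is preserved for every simple edge with positive probability.

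An alternative, more streamlined approach avoids explicit colourings by instead working with the probabilistic characterization in \Cref{eq:fdom-proba}. Starting from a random dominating set $\bD_H$ of ``colour-class type'' on $V(H)$ with marginals at most $2/5$, one uses \Cref{lem:extension} (applicable because $r=2/5<1/2$) to extend $\bD_H$ across each subdivision locally, then invokes \Cref{lem:weakdomination} to upgrade to an honest $2/5$-colouring. In this reformulation, the main hurdle remains the same: ensuring that the joint distribution of $(\phi(u),\phi(v))$ has overlap exactly $m$ on double edges while keeping marginals uniform, which is where the combinatorial core of the argument lies.
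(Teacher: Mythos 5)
Your local analysis of the subdivision vertices is correct (overlap at most $m$ on simple edges, exactly $m$ on double edges), but there is a genuine gap at the step where you claim the domination of the branch vertices $u\in V(H)$ ``follows for free''. The inclusion you check, $\phi(v)\cup\phi(x_{uv})\supseteq[5m]\setminus\phi(u)$, is exactly the domination condition for the \emph{subdivision vertex} $x_{uv}$, not for $u$: in $G$ the vertex $v$ is at distance $2$ from $u$, so $\phi(v)$ contributes nothing to $N_G[u]$. What must be verified is $\phi(u)\cup\bigcup_{w\in N_G(u)}\phi(w)=[5m]$, where every $w\in N_G(u)$ is a subdivision vertex whose colour set is almost entirely forced: $\phi(x_{uv})$ must contain $[5m]\setminus(\phi(u)\cup\phi(v))$ and has only $m-|\phi(u)\cap\phi(v)|$ colours of slack, which moreover has to be shared between helping to dominate $u$ and helping to dominate $v$. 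After placing the forced colours, the part of $[5m]\setminus\phi(u)$ still uncovered is $\bigcap_{v\sim u}\phi(v)\setminus\phi(u)$, which can be nonempty, so this is a genuine global constraint on the labelling $\phi$ and not a consequence of $\deg_H(u)\ge 3$ alone. This is precisely where the paper's proof does its real work: it builds a random dominating set directly, dominating the $B_0$-vertices by adding a random undominated subdivision neighbour (the set $\bA'_0$) and bounding the resulting excess probability by $\frac{2}{3}\cdot\frac{3}{5}\cdot\frac{4}{5}\cdot(\frac12)^3\le\frac1{25}$ using $\deg\ge3$, and dominating the $B_1$-vertices through the $2$-path of their hammock. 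Relatedly, the existence of a labelling with overlap \emph{exactly} $m$ on every double edge is asserted but not established: your ``edge by edge'' fixing is not obviously consistent when a vertex lies on several double edges, and the LLL completion only controls the $\le m$ constraints. (A construction achieving these exact overlaps does exist --- it is essentially the paper's \Cref{lem:intersecting-family} --- but you would need to supply it and then still verify branch-vertex domination for it.)

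Your ``alternative streamlined approach'' fails outright: \Cref{lem:extension} applied to a suspended $2$-path $u\link x\link v$ at $r=2/5$ would require an $h$-dominating $2/5$-colouring $\bD_1$ of the path in which $u$ and $v$ lie in exactly the same realisations and $x$ is dominated with probability $1$; writing the realisation probabilities out, these constraints force a negative probability, so no such $\bD_1$ exists. This is consistent with \Cref{lem:strong-pathlemma}, which for $\ell\in\{2,3\}$ only guarantees the value $2$, and it is the reason \Cref{lem:5/2} cannot be obtained by peeling off the short suspended paths one at a time: the paper instead handles the whole graph in one global randomised construction and finishes with \Cref{lem:weakdomination}.
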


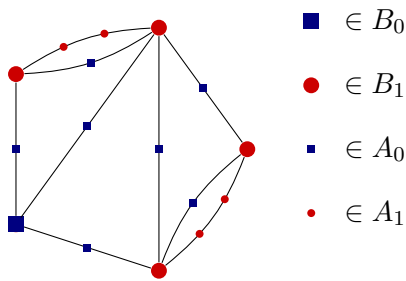
\begin{figure}[!ht]
    \centering
    \begin{tikzpicture}[every node/.style={shape=circle,fill=black!60!white,inner sep=0pt,minimum size=3pt}]

\def\R{1.7}
\def\n{5}

\foreach \i in {1,...,\n}{
	\node[fill=red!80!black, minimum size=6pt] (x\i) at (360*\i/\n : \R) {};
}

\node[fill=blue!50!black, minimum size=6pt, rectangle] at (x3) {};

\draw (x1) to[bend left=15] node[midway, fill=blue!50!black, rectangle] {} (x2);
\draw (x1) to[bend right=15] node[pos=0.67, fill=red!80!black] {}
            node[pos=0.33, fill=red!80!black] {} (x2) ;
            
\draw (x1) to node[midway, fill=blue!50!black, rectangle] {} (x3);
            
\draw (x4) to[bend left=15] node[midway, fill=blue!50!black, rectangle] {} (x5);
\draw (x4) to[bend right=15] node[pos=0.67, fill=red!80!black] {}
            node[pos=0.33, fill=red!80!black] {} (x5) ;
            
\draw (x2) to node[midway, fill=blue!50!black, rectangle] {} (x3);
            
\draw (x3) to node[midway, fill=blue!50!black, rectangle] {}(x4) ;

\draw (x1) to node[midway, fill=blue!50!black, rectangle] {} (x4);

\draw (x1) to node[midway, fill=blue!50!black, rectangle] {} (x5) ;

\node[fill=blue!50!black, minimum size=6pt, rectangle] (a) at (1.5*\R, \R) {};
\node[draw=none, fill=none] (a') at (2*\R, \R) {$\in B_0$};\node[fill=red!80!black, minimum size=6pt] (a) at (1.5*\R, \R/2) {};
\node[draw=none, fill=none] (a') at (2*\R, \R/2) {$\in B_1$};
\node[fill=blue!50!black, rectangle] (b) at (1.5*\R, 0) {};
\node[draw=none, fill=none] (b') at (2*\R, 0) {$\in A_0$};
\node[fill=red!80!black] (b) at (1.5*\R, -\R/2) {};
\node[draw=none, fill=none] (b') at (2*\R, -\R/2) {$\in A_1$};

\end{tikzpicture}
    \caption{A graph that falls in the base case of the induction.}
    \label{fig:hammock-graph}
\end{figure}

\begin{proof}
    Let $A$ be the set of $2$-vertices in $G$, i.e. those that arise from the subdivision, and $B$ the set of $3^+$-vertices in $G$, i.e. those from $V(H)$. 
    Let $A_0\subseteq A$ be the set of $2$-vertices that are part of a suspended $2$-path and $A_1 \coloneqq A\setminus A_0$ the $2$-vertices that are part of a suspended $3$-path. 
    Let $B_1 \subseteq B$ be the set of $3^+$-vertices that are endpoints of a suspended $3$-path and $B_0 \coloneqq B \setminus B_0$ those that are not. An illustration is given in \Cref{fig:hammock-graph}.
    
    Let us build a dominating $2/5$-colouring $\bD$ of $G$. 
    Let $\bB_0, \bB_1, \bA_0,\bA'_0,\bA_1, \bA'_1$ be random sets built as follow.
    \begin{enumerate}[label=(\roman*)]
    \item For each vertex in $B_0$, include it in $\bB_0$ with probability $2/5$, independently from the others.
    \item With probability $1/5$, set $\bB_1 \coloneqq \emptyset$; otherwise, for each vertex in $B_1$, include it in $\bB_1$ with probability $1/2$, independently from the others.
    \item For each vertex $v\in A_0$ such that $N(v)\cap (\bB_0\cup\bB_1) = \emptyset$, include it in $\bA_0$.
    \item For each vertex $v\in B_0\setminus \bB_0$ such that $N(v)\cap \bA_0 = \emptyset$, let $\mathbf{z} \gets \cU(N(v))$ be a uniformly random neighbour of $v$ (by definition of $B_0$, one has $\mathbf{z}\in A_0$), and include it in $\bA'_0$.
    \item For every suspended $3$-path $u\link v\link w\link x$ with $u\in B_1\setminus \bB_1$ and $x\in \bB_1$, include $v$ in $\bA_1$.
    \item For every suspended $3$-path $u\link v\link w\link x$ with $\{u,x\} \subseteq B_1\setminus \bB_1$, let $\mathbf{z}\gets \cU(\{v,w\})$ be chosen uniformly at random from $\{v,w\}$, and include it in $\bA'_1$.
    \end{enumerate}
    Finally, let $\bD = \bB_0\cup\bB_1\cup\bA_0\cup\bA'_0\cup\bA_1\cup\bA'_1$.

    First, we claim that $\bD$ is a dominating set of $G$.
    Every vertex of $A_0$ either is in $\bA_0$ or has a neighbour in $\bB_0\cup\bB_1$.
    Every vertex of $B_0$ either is in $\bB_0$ or has a neighbour in $\bA_0\cup\bA'_0$. 
    Let $v$ be a vertex of $A_1$ and $u\link v\link w\link x$ the suspended $3$-path containing $v$.
    If $u\in \bB_1$, then $v$ is dominated by $u$.
    If $u\notin \bB_1$ and $x\in \bB_1$, then $v\in \bA_1$.
    If $u\notin \bB_1$ and $x\notin\bB_1$, then either $v$ or $w$ is in $\bA'_1$.
    Therefore, $A_1$ is dominated by $\bD$.
    Let $u$ be a vertex of $B_1$ and $u\link v\link w\link x$ a suspended $3$-path containing $u$.
    Since there must exists a hammock between $u$ and $x$, they also share a suspended $2$-path $u\link y\link x$.
    If $u\in \bB_1$, then $u$ is dominated by $\bB_1$.
    If $u\notin \bB_1$ and $x\in \bB_1$, then $v\in \bA_1$ so $u$ is dominated by $v$.
    If $u,x\notin \bB_1$, then $y\in \bA_0$ so $u$ is dominated by $y$.
    Therefore, $B_1$ is dominated by $\bD$.

    Now, we show that every vertex is in $\bD$ with probability at most $2/5$. This concludes the proof with an application of \Cref{lem:weakdomination}.
    For every vertex $b_0\in B_0$,
    \[
    \pr{b_0\in \bD} = \pr{b_0\in \bB_0} = \frac25.
    \]
    For every vertex $b_1\in B_1$,
    \[
    \pr{b_1\in\bD} = \pr{b_1\in\bB_1} = \pr{\bB_1\neq\emptyset}\pr{b_1\in\bB_1\mid \bB_1\neq\emptyset} = \frac45 \cdot \frac12 = \frac25.
    \]
    Let $a_1\in A_1$ and $u\link a_1\link w\link x$ be the suspended $3$-path containing $a_1$. We have
    \begin{align*}
    \pr{a_1\in \bA_1} &= \pr{u\notin \bB_1 \et x\in \bB_1} = \frac45\cdot\frac12\cdot\frac12 = \frac15;\\
    \pr{a_1\in \bA'_1} &= \frac12\pr{u\notin \bB_1 \et x\notin \bB_1}\\
    &= \frac12\left(\pr{\bB_1=\emptyset}+\pr{\bB_1\neq\emptyset}\pr{\{u,x\} \cap  \bB_1 = \emptyset \mid \bB_1\neq\emptyset}\right) \\
    &= \frac12\left(\frac15 + \frac45\cdot\frac12\cdot\frac12\right) = \frac15 \mbox{; and so}\\
    \pr{a_1\in \bD} &= \pr{a_1\in \bA_1} + \pr{a_1\in \bA'_1} = \frac15 + \frac15 = \frac25. 
    \end{align*}
    Let $a_0\in A_0$; write $N(a_0) = \{b,b'\}$, and suppose without loss of generality that $\deg(b)\le \deg(b')$.
    If $\{b,b'\}\subseteq B_1$, then
    \[
    \pr{a_0\in\bD} = \pr{a_0\in \bA_0} = \pr{\{b,b'\}\cap \bB_1 = \emptyset} = \frac25 \mbox{, similar to the computation above.}
    \]
    Otherwise, assume without loss of generality that $b \notin B_1$; 
    \begin{align*}
    \pr{a_0\in \bA_0} &= \pr{\{b,b'\} \cap \big(\bB_0\cup\bB_1\big) = \emptyset}\\
    &= \pr{b\notin \bB_0}\pr{b'\notin \bB_0\cup\bB_1}\\
    &= \left(1-\frac25\right)\left(1-\frac25\right) = \frac{9}{25}; \quad \mbox{and}\\
    \pr{a_0\in \bA'_0} &\le 2\cdot\frac{\pr{b\notin \bB_0 \et N(b)\cap \bA_0 = \emptyset}}{\deg(b)} &\mbox{ since at worst $\{b,b'\} \subseteq B_0$ and }\deg(b)\le\deg(b');\\
    & \le \frac2{\deg(b)}\left(1-\frac25\right)\frac45\left(\frac{1}{2}\right)^{\deg(b)} & \begin{array}{l}
    \mbox{since at worst all $3^+$-vertices}\\
    \mbox{at distance 2 from $b$ are in $\bB_1$;}
    \end{array}\\
    & \le \frac2{3}\left(1-\frac25\right)\frac45\left(\frac{1}{2}\right)^3 &\mbox{ since }\deg(b)\ge 3;\\
    & \le \frac{1}{25};
    \end{align*}
    and so 
    \begin{align*}
    \pr{a_0\in\bD} &= \pr{a_0\in \bA_0} + \pr{a_0\in \bA'_0} \le \frac{9}{25} + \frac{1}{25} = \frac25.
    \end{align*}
    This ends the proof.
\end{proof}

In order to prove \Cref{thm:5/2}, we prove a stronger statement that deals with vertices of degree 1 and reduce the remaining cases.

\begin{thm}\label{thm:5/$2$-technical}
    Let $\B$ be the family of bad graphs depicted in \Cref{fig:badgraphs}. 
    Let $G\notin \B$ be a connected graph on at least $2$ vertices, and let $f_G\colon V(G) \to [0,1]$ be the demand function such that 
    \[f_G(v) = \begin{cases}
        4/5 & \mbox{if $\deg(v)=1$}; \\
        1 & \mbox{otherwise}.
    \end{cases}\]
    Then $G$ has an $f_G$-dominating $2/5$-colouring.
\end{thm}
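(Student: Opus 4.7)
The plan is to argue by minimum counterexample on $|V(G)|$. Let $G\notin\B$ be a smallest connected graph on at least two vertices admitting no $f_G$-dominating $2/5$-colouring, and derive progressively stronger structural properties of $G$ until the configuration is covered by \Cref{lem:5/2}. The three engines of the reduction are \Cref{lem:gluing} (for cut-vertices), \Cref{lem:strong-pathlemma} (for long suspended paths), and \Cref{lem:extension} (for $2$-separation replacements).

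\emph{No pendants and no cut-vertices.} If $v\in V(G)$ is a pendant with neighbour $u$ and $|V(G)|\ge 3$, then $u$ is a cut-vertex of $G$ inducing the separation $(G-v,\,G[\{u,v\}])$. The edge $G[\{u,v\}]\cong K_2$ directly admits the $f_{K_2}$-dominating $2/5$-colouring $\pr{\bD=\{u\}}=\pr{\bD=\{v\}}=2/5$, $\pr{\bD=\emptyset}=1/5$. Induction applied to $G-v$ (after dispensing with the finitely many cases $G-v\in\B$ by direct construction) supplies an $f_{G-v}$-dominating $2/5$-colouring. Gluing via \Cref{lem:gluing} is feasible: the combined demand at $u$ is $\min\{1,\,f_{G-v}(u)+4/5-2/5\}\ge 1=f_G(u)$, and demands are unchanged at every other vertex. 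An analogous application of \Cref{lem:gluing} on arbitrary cut-vertex separations shows $G$ is $2$-connected, and in particular pendant-free, so $f_G\equiv 1$ from now on.

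\emph{No long suspended path.} If $G$ contains a suspended path $P$ of length $\ell\ge 4$, set $G':=G\setminus P$: this is a connected graph of minimum degree $\ge 2$, since the endpoints of $P$ had degree $\ge 3$ in $G$. Induction on $G'$ (again handling separately the finite list of subcases $G'\in\B$) yields a dominating $2/5$-colouring of $G'$. Writing $k:=\lceil\ell/3\rceil\ge 2$, one has $(3k-1)/k\ge 5/2$, so \Cref{lem:strong-pathlemma} extends the colouring to a dominating $2/5$-colouring of $G$, contradicting the choice of $G$. Consequently every suspended path of $G$ has length in $\{1,2,3\}$.

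\emph{Reduction to the base case.} To invoke \Cref{lem:5/2}, I must show that $G$ is obtained from a multigraph $H$ of minimum degree $\ge 3$ and multiplicity $\le 2$ by subdividing each simple edge into a suspended $2$-path and each double edge into a hammock. Two obstructing local configurations must still be ruled out: suspended $1$-paths (direct edges between $3^+$-vertices) and ``lone'' suspended $3$-paths (those whose endpoints are not also joined by a suspended $2$-path). For each, I construct two $h$-dominating $(5:2)$-colourings of the gadget --- one with disjoint colour-sets at the two attaching vertices and one with identical colour-sets --- and apply \Cref{lem:extension} to replace the gadget in $G$ by a shorter legal one, producing a strictly smaller $G'\notin\B$ on which induction applies. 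The main obstacle is precisely this last step: the case analysis to produce the compatible pair of $(5:2)$-colourings for every possible short-path configuration, together with the verification that no reduction lands in $\B$, is intricate, and it is this bookkeeping that pins down the precise list of eight exceptions forming $\B$.
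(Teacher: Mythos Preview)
Your overall strategy mirrors the paper's, but two of your proposed reductions are genuinely broken.

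\textbf{Suspended $1$-paths.} You induct on $|V(G)|$, yet for an edge $uv$ between two $3^+$-vertices the only candidate for $G'$ is $G-uv$, which has the \emph{same} vertex set. No smaller graph is produced, so your induction does not apply. The paper avoids this by taking a counterexample minimising the number of \emph{edges}; then $G-uv$ is strictly smaller and any dominating $2/5$-colouring of $G-uv$ is automatically one of $G$. Your appeal to \Cref{lem:extension} here is a red herring: there is no internal vertex in the gadget, so the lemma adds nothing beyond what you would get from $G-uv$ directly.

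\textbf{Lone suspended $3$-paths.} Your plan requires two $h$-dominating $(5:2)$-colourings of the path $u\link x\link y\link v$ with $h(x)=h(y)=1$, one with $\phi_0(u)\cap\phi_0(v)=\emptyset$. But such a $\phi_0$ does not exist: from $\phi_0(\{u,x,y\})=[5]=\phi_0(\{x,y,v\})$ one gets $[5]\setminus\big(\phi_0(x)\cup\phi_0(y)\big)\subseteq \phi_0(u)\cap\phi_0(v)=\emptyset$, forcing $|\phi_0(x)\cup\phi_0(y)|=5$, impossible with $|\phi_0(x)|=|\phi_0(y)|=2$. This is precisely why \Cref{lem:strong-pathlemma} needs $\ell\ge 4$. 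The paper instead \emph{contracts} the $3$-path to a single vertex $w$ (so $G'$ loses three vertices and $\deg_{G'}(w)\ge 4$) and then runs an ad hoc probabilistic extension argument, not \Cref{lem:extension}, to pull the colouring of $G'$ back to $G$.

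Finally, your list of obstructions for \Cref{lem:5/2} is incomplete: you must also forbid twin suspended $2$-paths (they create a $C_4$ and violate the hammock structure) and twin suspended $3$-paths (they violate multiplicity $\le 2$). The paper handles these with separate short reductions before reaching the base case.
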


\begin{proof}

    Let us assume for the sake of contradiction that there exists a counterexample $\Gamma$ to the statement of \Cref{thm:5/$2$-technical} minimizing the number of edges.

    We first show that $\Gamma$ has no cut-vertex.
    To that end, we introduce the notion of quasi-dominating colouring. Given a graph $G$ and a vertex $v_0\in V(G)$, a $(p:q)$-colouring $\phi$ of $G$ is \emph{$v_0$-quasi-dominating} if $|\phi(N[v_0])|=p-1$ and $|\phi(N[v])|=p$ for every $v\neq v_0$.
    It turns out that for every graph $\Gamma \in \B \setminus \{C_4\}$ and every vertex $v_0\in V(\Gamma)$, there is a $v_0$-quasi-dominating $(5:2)$-colouring of $\Gamma$; such a colouring is depicted in \Cref{fig:quasi-dominating-colouring}.

    \begin{claim}
        \label{claim:5/2-cutvertex}
        $\Gamma$ is $2$-connected.
    \end{claim}

    \begin{proofclaim}
        As depicted in \Cref{fig:quasi-dominating-colouring}, every graph $G\in \B\setminus \{C_4\}$ has a $v_0$-quasi-dominating colouring for every vertex $v_0\in V(G)$, and so, by \Cref{lem:set-to-fraction}, $G$ has an $f_{v_0}$-dominating $2/5$-colouring, where 
        \[f_{v_0}(v) \coloneqq \begin{cases}
            4/5 &\mbox{if $v=v_0$};\\
            1 &\mbox{otherwise}.
        \end{cases}\]
        For $G=C_4$, a similar statement holds by replacing $4/5$ with $3/5$. This is certified by the $(5:2)$-colouring depicted in \Cref{fig:C4-} together with \Cref{lem:set-to-fraction}.
        Assume for the sake of contradiction that $\Gamma$ has a cut-vertex $v_0$; let $(G_0,G_1)$ be a separation of $G$ of order $1$ induced by $v_0$.
        By minimality of $\Gamma$, each $G_i$ that is not in $\B$ has an $f_i$-dominating $2/5$-colouring where $f_i=f_{G_i}$. Moreover, each $G_i$ in $\B \setminus \{C_4\}$ has an $f_i$-dominating $2/5$-colouring where $f_i(v_0)=4/5$ and $f_i(u)=1$ for every $u\neq v_0$. If $G_i=C_4$, the same holds with $f_i(v_0)=3/5$ instead.
        Since $\Gamma \notin \B$, we cannot have $G_0=G_1=C_4$; otherwise $\Gamma$ would be the bad graph depicted in \Cref{fig:badgraphs:vertexgluedC4}. Therefore $\max\{f_0(v_0),f_1(v_0)\}\ge 4/5$, while $\min \{f_0(v_0),f_1(v_0)\}\ge 3/5$. 
        We apply \Cref{lem:gluing} and conclude that $\Gamma$ has an $f_{\Gamma}$-dominating $2/5$-colouring, the desired contradiction.
    \end{proofclaim}

    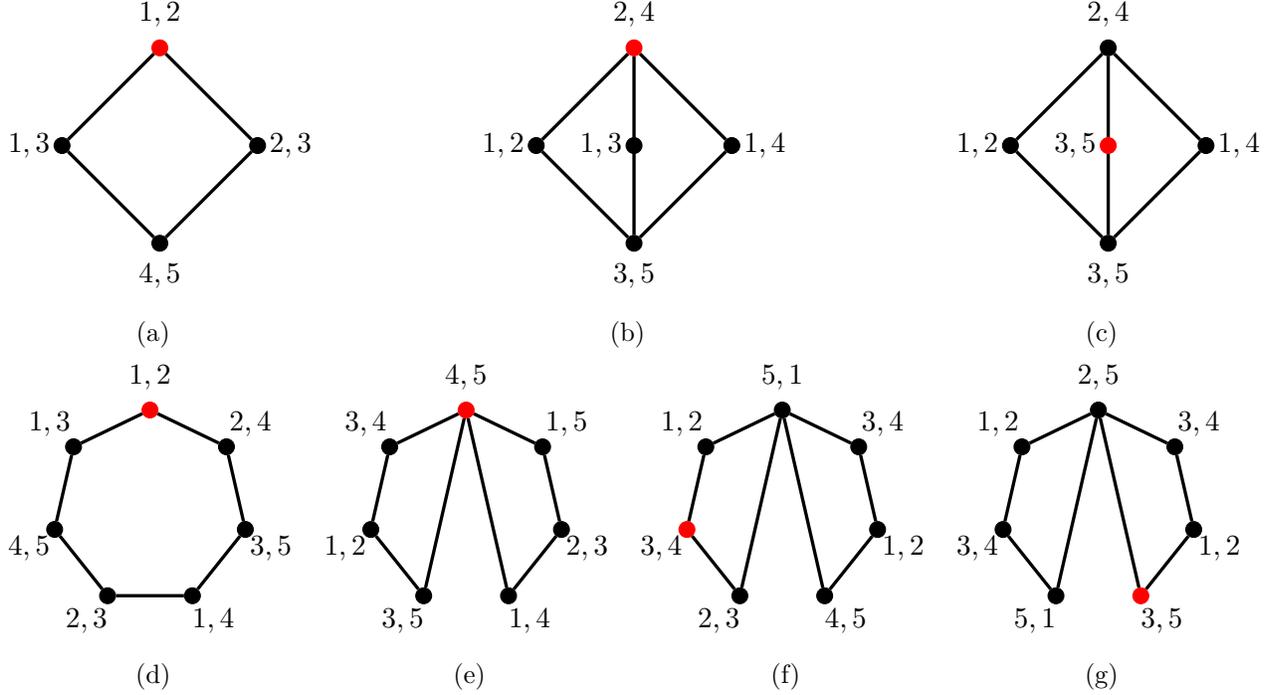
\begin{figure}[!htbp]
        \centering
        \begin{subfigure}[b]{0.24\textwidth}
            \centering
            \begin{tikzpicture}[baseline=0pt, every node/.style={shape=circle,draw=black,fill,inner sep=0pt,minimum size=6pt}, every edge/.style={line width=1.25pt,draw,black}]

    \def\n{4}
    \def\R{1.3} 
    \deflabel{1}{$1,2$}
    \deflabel{2}{$1,3$}
    \deflabel{3}{$4,5$}
    \deflabel{4}{$2,3$}

    \coordinate[color=red, label=\lab{1}, circle] (x1) at (90:\R);
    \foreach \i in {2,...,\n} {
        \pgfmathsetmacro{\angle}{90+360/\n*(\i-1)}
        
        \coordinate[label = \angle : \lab{\i}, circle] (x\i) at ({\angle}:\R);
    }
     \foreach \i in {1,...,\n} {
        \pgfmathtruncatemacro{\next}{mod(\i,\n)+1}
        \draw (x\i) edge (x\next);
    }
                
\end{tikzpicture}
            \caption{}
            \label{fig:C4-}
        \end{subfigure} \hfill
        \begin{subfigure}[b]{0.24\textwidth}
            \centering
                       

\begin{tikzpicture}[baseline=0pt, every node/.style={shape=circle,draw=black,fill,inner sep=0pt,minimum size=6pt}, every edge/.style={line width=1.25pt,draw,black}]
            \def\R{1.3}
                \coordinate[label=right:{$1,4$}, circle] (A) at (0 : \R);
                \coordinate[color=red, label={$2,4$}, circle] (B) at (90 : \R);
                \coordinate[label=left:{$1,2$}, circle] (C) at (180 : \R);
                \coordinate[label=below:{$3,5$}, circle] (D) at (270 : \R);
                \coordinate[label=left:{$1,3$}, circle] (O) at (0, 0);
                \draw (A) edge (B);
                \draw (B) edge (C);
                \draw (C) edge (D);
                \draw (D) edge (A);
                \draw (D) edge (O);
                \draw (O) edge (B);
\end{tikzpicture}
            \caption{}
            \label{fig:K23-a}
        \end{subfigure} \hfill
        \begin{subfigure}[b]{0.24\textwidth}
            \centering
                       

\begin{tikzpicture}[baseline=0pt, every node/.style={shape=circle,draw=black,fill,inner sep=0pt,minimum size=6pt}, every edge/.style={line width=1.25pt,draw,black}]
            \def\R{1.3}
                \coordinate[label=right:{$1,4$}, circle] (A) at (0 : \R);
                \coordinate[label={$2,4$}, circle] (B) at (90 : \R);
                \coordinate[label=left:{$1,2$}, circle] (C) at (180 : \R);
                \coordinate[label=below:{$3,5$}, circle] (D) at (270 : \R);
                \coordinate[color=red, label=left:{$3,5$}, circle] (O) at (0, 0);
                \draw (A) edge (B);
                \draw (B) edge (C);
                \draw (C) edge (D);
                \draw (D) edge (A);
                \draw (D) edge (O);
                \draw (O) edge (B);
\end{tikzpicture}
            \caption{}
            \label{fig:K23-b}
        \end{subfigure}

        \begin{subfigure}[b]{0.24\textwidth}
            \centering
\begin{tikzpicture}[baseline=0pt, every node/.style={shape=circle,draw=black,fill,inner sep=0pt,minimum size=6pt}, every edge/.style={line width=1.25pt,draw,black}]

    \def\n{7}
    \def\R{1.3} 
    \deflabel{1}{$1,2$}
    \deflabel{2}{$1,3$}
    \deflabel{3}{$4,5$}
    \deflabel{4}{$2,3$}
    \deflabel{5}{$1,4$}
    \deflabel{6}{$3,5$}
    \deflabel{7}{$2,4$}

    \coordinate[color=red, label = above : \lab{1}, circle] (x1) at (90:\R);
    \foreach \i in {2,...,\n} {
        \pgfmathsetmacro{\angle}{90+360/\n*(\i-1)}
        
        \coordinate[label = \angle : \lab{\i}, circle] (x\i) at ({\angle}:\R);
    }
     \foreach \i in {1,...,\n} {
        \pgfmathtruncatemacro{\next}{mod(\i,\n)+1}
        \draw (x\i) edge (x\next);
    }
    \end{tikzpicture}
            \caption{}
            \label{fig:C7-}
        \end{subfigure} \hfill
        \begin{subfigure}[b]{0.24\textwidth}
            \centering
            \begin{tikzpicture}[every node/.style={shape=circle,draw=black,fill,inner sep=0pt,minimum size=6pt}, every edge/.style={line width=1.25pt,draw,black}]

    \def\n{7}
    \def\R{1.3}
    \deflabel{1}{$1,4$}
    \deflabel{2}{$2,3$}
    \deflabel{3}{$1,5$}
    \deflabel{4}{$4,5$}
    \deflabel{5}{$3,4$}
    \deflabel{6}{$1,2$}
    \deflabel{7}{$3,5$}
    
    \foreach \i in {1,...,\n} {
        \pgfmathsetmacro{\angle}{90+360/\n*(\i-4)}    
        \ifnum\i=4
            \def\couleur{red}
        \else
            \def\couleur{black}
        \fi
            \coordinate[color=\couleur, label = \angle : \lab{\i}, circle] (x\i) at ({\angle}:\R);
    }
    \foreach \i in {1,...,6} {
        \pgfmathtruncatemacro{\next}{mod(\i,\n)+1}
        \draw (x\i) edge (x\next);
    }

    \draw (x1) edge (x4);
    \draw (x4) edge (x7);
                
\end{tikzpicture}
            \caption{}
            \label{fig:2C4-a}
        \end{subfigure} \hfill
        \begin{subfigure}[b]{0.24\textwidth}
            \centering
            \begin{tikzpicture}[every node/.style={shape=circle,draw=black,fill,inner sep=0pt,minimum size=6pt}, every edge/.style={line width=1.25pt,draw,black}]

    \def\n{7}
    \def\R{1.3}
    \deflabel{1}{$4,5$}
    \deflabel{2}{$1,2$}
    \deflabel{3}{$3,4$}
    \deflabel{4}{$5,1$}
    \deflabel{5}{$1,2$}
    \deflabel{6}{$3,4$}
    \deflabel{7}{$2,3$}
    
    \foreach \i in {1,...,\n} {
        \pgfmathsetmacro{\angle}{90+360/\n*(\i-4)}    
        \ifnum\i=6
            \def\couleur{red}
        \else
            \def\couleur{black}
        \fi
            \coordinate[color=\couleur, label = \angle : \lab{\i}, circle] (x\i) at ({\angle}:\R);
    }
    \foreach \i in {1,...,6} {
        \pgfmathtruncatemacro{\next}{mod(\i,\n)+1}
        \draw (x\i) edge (x\next);
    }

    \draw (x1) edge (x4);
    \draw (x4) edge (x7);
                
\end{tikzpicture}
            \caption{}
            \label{fig:2C4-b}
        \end{subfigure} \hfill
        \begin{subfigure}[b]{0.24\textwidth}
            \centering
            \begin{tikzpicture}[every node/.style={shape=circle,draw=black,fill,inner sep=0pt,minimum size=6pt}, every edge/.style={line width=1.25pt,draw,black}]

    \def\n{7}
    \def\R{1.3}
    \deflabel{1}{$3,5$}
    \deflabel{2}{$1,2$}
    \deflabel{3}{$3,4$}
    \deflabel{4}{$2,5$}
    \deflabel{5}{$1,2$}
    \deflabel{6}{$3,4$}
    \deflabel{7}{$5,1$}
    
    \foreach \i in {1,...,\n} {
        \pgfmathsetmacro{\angle}{90+360/\n*(\i-4)}    
        \ifnum\i=1
            \def\couleur{red}
        \else
            \def\couleur{black}
        \fi
            \coordinate[color=\couleur, label = \angle : \lab{\i}, circle] (x\i) at ({\angle}:\R);
    }
    \foreach \i in {1,...,6} {
        \pgfmathtruncatemacro{\next}{mod(\i,\n)+1}
        \draw (x\i) edge (x\next);
    }

    \draw (x1) edge (x4);
    \draw (x4) edge (x7);
                
\end{tikzpicture}
            \caption{}
            \label{fig:2C4-c}
        \end{subfigure}
        
        \caption{For every graph $G \in \B \setminus \{C_4\}$ and every vertex $v_0\in V(G)$ (coloured in red), there is a $v_0$-quasi-dominating $(5:2)$-colouring of $\Gamma$. We ignore the graphs that have a spanning subgraph already in the list.}
        \label{fig:quasi-dominating-colouring}
    \end{figure}

	Observe now that $\Gamma \neq K_2$, since there is a $4/5$-dominating $(5:2)$-colouring of $K_2$ --- colour one vertex with $\{1,2\}$ and the other with $\{3,4\}$. So if there is a degree-$1$ vertex in $\Gamma$, it is adjacent to a vertex of degree at least $2$, which itself is a cut-vertex, a contradiction to \Cref{claim:5/2-cutvertex}.
    So $\Gamma$ has minimum degree at least $2$;  in particular, $\fdom(\Gamma)<5/2$.

    \begin{claim}
        \label{claim:5/2-adjacent}
        $\Gamma$ does not contain two adjacent $3^+$-vertices.
    \end{claim}

    \begin{proofclaim}
        Suppose for the sake of contradiction that $\Gamma$ has an edge $e = uv$ where $u$ and $v$ are $3^+$ vertices. Let $\Gamma' \coloneqq \Gamma - e$, then $\Gamma'$ has minimum degree at least $2$, and any dominating $2/5$-colouring of $\Gamma'$ is also a dominating $2/5$-colouring of $\Gamma$. Hence we are done if $\Gamma' \notin \B$, by minimality of $\Gamma$.
        
        We now consider the case $\Gamma' \in \B$. We exhaustively list all possible graphs $\Gamma$ that yield that case (where the edge $e$ is highlighted in red) and provide a certificate that they all satisfy $\fdom(\Gamma) \ge 5/2$ in \Cref{fig:badgraph+edge}.
    \end{proofclaim}

    \begin{figure}[!htbp]
        \centering
        \begin{subfigure}[b]{0.24\textwidth}
            \centering
            \begin{tikzpicture}[every node/.style={shape=circle,draw=black,fill,inner sep=0pt,minimum size=6pt}, every edge/.style={line width=1.25pt,draw,black}]

    \def\n{4}
    \def\R{1.1} 
    \deflabel{1}{$1,2$}
    \deflabel{2}{$1,5$}
    \deflabel{3}{$3,4$}
    \deflabel{4}{$1,5$}

    \foreach \i in {1,...,\n} {
        \pgfmathsetmacro{\angle}{90+360/\n*(\i-1)}
        
        \coordinate[label = \angle : \lab{\i}, circle] (x\i) at ({\angle}:\R);
    }
     \foreach \i in {1,...,\n} {
        \pgfmathtruncatemacro{\next}{mod(\i,\n)+1}
        \draw (x\i) edge (x\next);
    }
    \draw (x1) edge[color=red] (x3);
                
\end{tikzpicture}
            \caption{}
            \label{fig:C4+}
        \end{subfigure} \hfill
        \begin{subfigure}[b]{0.24\textwidth}
            \centering
                       

\begin{tikzpicture}[every node/.style={shape=circle,draw=black,fill,inner sep=0pt,minimum size=6pt}, every edge/.style={line width=1.25pt,draw,black}]
            \def\R{1.1}
                \coordinate[label=right:{$1,2$}, circle] (A) at (0 : \R);
                \coordinate[label={$3,4$}, circle] (B) at (90 : \R);
                \coordinate[label=left:{$1,5$}, circle] (C) at (180 : \R);
                \coordinate[label=below:{$2,3$}, circle] (D) at (270 : \R);
                \coordinate[label=left:{$4,5$}, circle] (O) at (0, 0);
                \draw (A) edge (B);
                \draw (B) edge (C);
                \draw (C) edge (D);
                \draw (D) edge (A);
                \draw (D) edge (O);
                \draw (O) edge[color=red] (A);
                \draw (O) edge (B);
\end{tikzpicture}
            \caption{}
            \label{fig:K23+a}
        \end{subfigure} \hfill
        \begin{subfigure}[b]{0.24\textwidth}
            \centering
                       

\begin{tikzpicture}[every node/.style={shape=circle,draw=black,fill,inner sep=0pt,minimum size=6pt}, every edge/.style={line width=1pt,draw,black}]
            \def\R{1.1}
                \coordinate[label=right:{$1,5$}, circle] (A) at (0 : \R);
                \coordinate[label={$1,2$}, circle] (B) at (90 : \R);
                \coordinate[label=left:{$1,5$}, circle] (C) at (180 : \R);
                \coordinate[label=below:{$3,4$}, circle] (D) at (270 : \R);
                \coordinate[label=left:{$1,5$}, circle] (O) at (0, 0);
                \draw (A) edge (B);
                \draw (B) edge (C);
                \draw (C) edge (D);
                \draw (D) edge (A);
                \draw (D) edge (O);
                \draw (O) edge (B);
                \draw (B) edge[bend left, color=red] (D);
\end{tikzpicture}
            \caption{}
            \label{fig:K23+b}
        \end{subfigure} \hfill

        \begin{subfigure}[b]{0.24\textwidth}
            \centering
\begin{tikzpicture}[every node/.style={shape=circle,draw=black,fill,inner sep=0pt,minimum size=6pt}, every edge/.style={line width=1.25pt,draw,black}]

    \def\n{7}
    \def\R{1.1} 
    \deflabel{1}{$3,4$}
    \deflabel{2}{$1,5$}
    \deflabel{3}{$3,4$}
    \deflabel{4}{$2,5$}
    \deflabel{5}{$1,5$}
    \deflabel{6}{$3,4$}
    \deflabel{7}{$2,5$}

    \foreach \i in {1,...,\n} {
        \pgfmathsetmacro{\angle}{90+360/\n*(\i-1)}
        
        \coordinate[label = \angle : \lab{\i}, circle] (x\i) at ({\angle}:\R);
    }
     \foreach \i in {1,...,\n} {
        \pgfmathtruncatemacro{\next}{mod(\i,\n)+1}
        \draw (x\i) edge (x\next);
    }
    \draw (x\n) edge[color=red] (x2);
    \end{tikzpicture}
            \caption{}
            \label{fig:C7+}
        \end{subfigure} \hfill
        \begin{subfigure}[b]{0.24\textwidth}
            \centering
            \begin{tikzpicture}[every node/.style={shape=circle,draw=black,fill,inner sep=0pt,minimum size=6pt}, every edge/.style={line width=1.25pt,draw,black}]
    \def\n{7}
    \def\R{1.1} 
    \deflabel{1}{$2,3$}
    \deflabel{2}{$4,5$}
    \deflabel{3}{$1,2$}
    \deflabel{4}{$4,5$}
    \deflabel{5}{$1,2$}
    \deflabel{6}{$3,4$}
    \deflabel{7}{$5,1$}

    \foreach \i in {1,...,\n} {
        \pgfmathsetmacro{\angle}{90+360/\n*(\i-1)}
        
        \coordinate[label = \angle : \lab{\i}, circle] (x\i) at ({\angle}:\R);
    }
     \foreach \i in {1,...,\n} {
        \pgfmathtruncatemacro{\next}{mod(\i,\n)+1}
        \draw (x\i) edge (x\next);
    }

    \draw (x3) edge (x6);
    \draw (x4) edge[color=red] (x1);

\end{tikzpicture}
            \caption{}
            \label{fig:C7++}
        \end{subfigure} \hfill
        \begin{subfigure}[b]{0.24\textwidth}
            \centering
            \begin{tikzpicture}[every node/.style={shape=circle,draw=black,fill,inner sep=0pt,minimum size=6pt}, every edge/.style={line width=1.25pt,draw,black}]

    \def\n{7}
    \def\R{1.1} 
    \deflabel{1}{$3,4$}
    \deflabel{2}{$1,2$}
    \deflabel{3}{$3,4$}
    \deflabel{4}{$5,1$}
    \deflabel{5}{$2,3$}
    \deflabel{6}{$4,5$}
    \deflabel{7}{$2,3$}

    \foreach \i in {1,...,\n} {
        \pgfmathsetmacro{\angle}{90+360/\n*(\i-4)}
        
        \coordinate[label = \angle : \lab{\i}, circle] (x\i) at ({\angle}:\R);
    }
     \foreach \i in {1,...,6} {
        \pgfmathtruncatemacro{\next}{mod(\i,\n)+1}
        \draw (x\i) edge (x\next);
    }

    \draw (x1) edge (x4);
    \draw (x4) edge (x\n);
    \draw (x6) edge[color=red] (x2);
                
\end{tikzpicture}
            \caption{}
            \label{fig:2C4+a}
        \end{subfigure} \hfill
        \begin{subfigure}[b]{0.24\textwidth}
            \centering
            \begin{tikzpicture}[every node/.style={shape=circle,draw=black,fill,inner sep=0pt,minimum size=6pt}, every edge/.style={line width=1.25pt,draw,black}]

    \def\n{7}
    \def\R{1.1} 
    \deflabel{1}{$3,4$}
    \deflabel{2}{$1,2$}
    \deflabel{3}{$5,1$}
    \deflabel{4}{$5,1$}
    \deflabel{5}{$1,2$}
    \deflabel{6}{$3,4$}
    \deflabel{7}{$1,2$}

    \foreach \i in {1,...,\n} {
        \pgfmathsetmacro{\angle}{90+360/\n*(\i-4)}
        
        \coordinate[label = \angle : \lab{\i}, circle] (x\i) at ({\angle}:\R);
    }
    \foreach \i in {1,...,6} {
        \pgfmathtruncatemacro{\next}{mod(\i,\n)+1}
        \draw (x\i) edge (x\next);
    }

    \draw (x1) edge (x4);
    \draw (x4) edge (x\n);
    \draw (x6) edge[color=red] (x3);
                
\end{tikzpicture}
            \caption{}
            \label{fig:2C4+b}
        \end{subfigure} \hfill

        \caption{All possible graphs $\Gamma\notin \B$ obtained from a graph $\Gamma'\in \B$ with an extra edge (coloured in red) have a dominating $(5:2)$-colouring. We ignore the graphs that have a cut-vertex and those that have a spanning subgraph already in the list.}
        \label{fig:badgraph+edge}
    \end{figure}
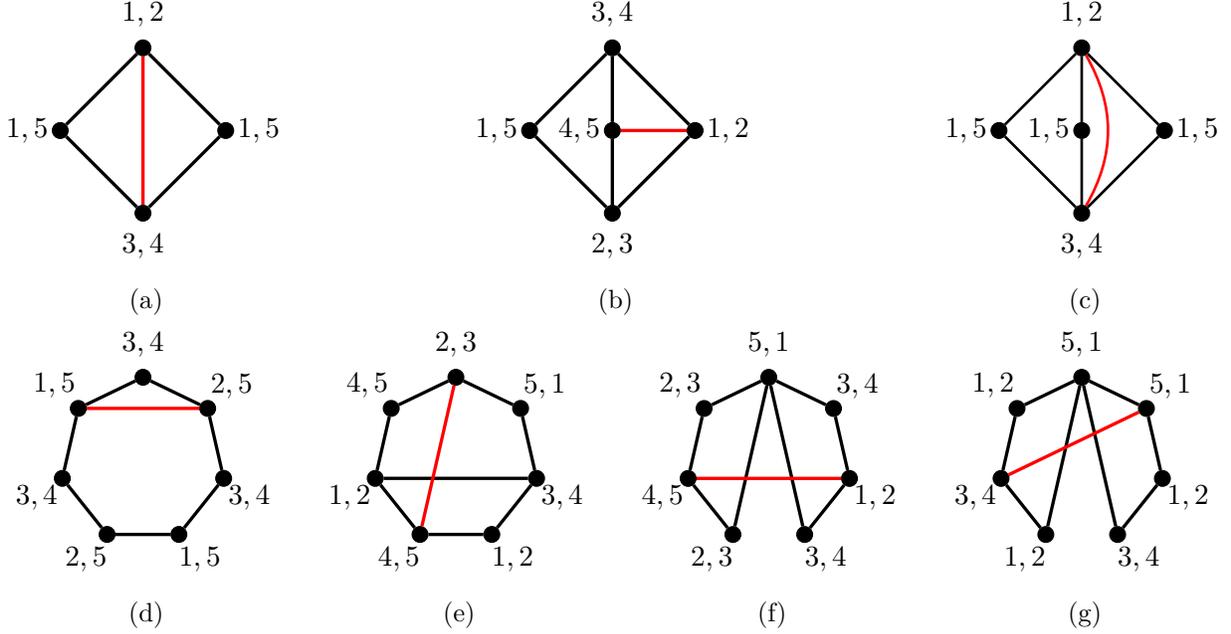

    \begin{claim}
        \label{claim:5/2-C4}
        $\Gamma$ has no $C_4$ as a subgraph.
    \end{claim}

    \begin{proofclaim}
        Suppose for the sake of contradiction that $\Gamma$ has a $C_4$ as a subgraph. By \Cref{claim:5/2-cutvertex} and \Cref{claim:5/2-adjacent}, it must consist of two twin suspended $2$-paths.
        Let $v,v'$ be the middle vertices of these two paths, and let $\Gamma' \coloneqq \Gamma \setminus v$. 
        If $\Gamma' \in \B$, then $\Gamma$ is either isomorphic to $K_{2,4}$, or has a copy of the graph $\Theta_{2,2,5}$ depicted in \Cref{fig:C7+C4} as a spanning subgraph. Both graphs have $\fdom\ge 5/2$ as illustrated in \Cref{fig:K24,fig:C7+C4}, a contradiction.
        We conclude that $\Gamma' \notin \B$, and so $\fdom(\Gamma')\ge 5/2$ (since $\Gamma$ and therefore also $\Gamma'$ have minimum degree at least $2$). Let $\bD$ be a dominating $2/5$-colouring of $\Gamma'$. We add $v$ to $\bD$ whenever $v'\in \bD$, so that we have $\pr{v\in \bD}=\pr{v' \in \bD}=2/5$ and $\pr{v \mbox{ is dominated by } \bD} = \pr{v' \mbox{ is dominated by } \bD} = 1$. So $\bD$ is a dominating $2/5$-colouring of $\Gamma$, a contradiction.
    \end{proofclaim}

    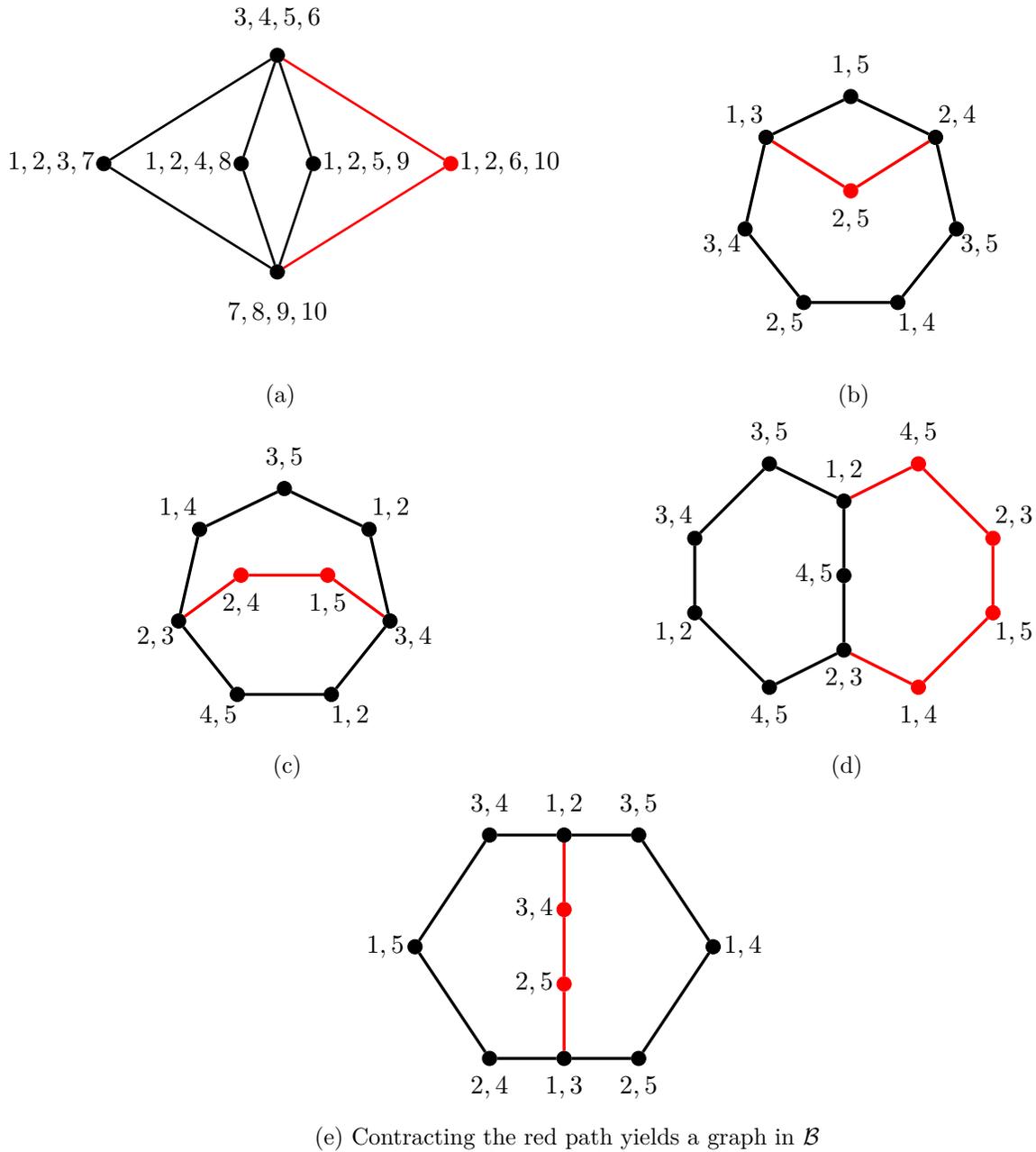
\begin{figure}[!htbp]
        \centering
        \begin{subfigure}[b]{0.49\textwidth}
            \centering
            \begin{tikzpicture}[baseline=0pt, every node/.style={shape=circle,draw=black,fill,inner sep=0pt,minimum size=6pt}, every edge/.style={line width=1pt,draw,black}]
            \def\R{1.6}
                \coordinate[label={[label distance=-0.25cm]above:$3,4,5,6$}, circle] (x) at (0,\R);
                \coordinate[label=left:{$1,2,3,7$}, circle] (A) at (-1.6*\R, 0);
                \coordinate[label=left:{$1,2,4,8$}, circle] (B) at (-\R/3, 0);
                \coordinate[label=right:{$1,2,5,9$}, circle] (C) at (\R/3, 0);\coordinate[color=red, label=right:{$1,2,6,10$}, circle] (D) at (1.6*\R,0);
                \coordinate[label={[label distance=-0.25cm]below:$7,8,9,10$}, circle] (y) at (0,-\R);
                \draw (x) edge (A) edge (B) edge (C) edge[color=red] (D);
                \draw (y) edge (A) edge (B) edge (C) edge[color=red] (D);

                \clip (-1.5*\R, -1.5*\R) rectangle (1.5*\R, 1.5*\R); 
\end{tikzpicture}
            \caption{}
            \label{fig:K24}
        \end{subfigure} \hfill
        \begin{subfigure}[b]{0.49\textwidth}
            \centering
            
            \begin{tikzpicture}[baseline=0pt, every node/.style={shape=circle,draw=black,fill,inner sep=0pt,minimum size=6pt}, every edge/.style={line width=1.25pt,draw,black}]

    \def\n{7}
    \def\R{1.6} 
    \deflabel{0}{$2,5$}
    \deflabel{1}{$1,5$}
    \deflabel{2}{$1,3$}
    \deflabel{3}{$3,4$}
    \deflabel{4}{$2,5$}
    \deflabel{5}{$1,4$}
    \deflabel{6}{$3,5$}
    \deflabel{7}{$2,4$}

    \foreach \i in {1,...,\n} {
        \pgfmathsetmacro{\angle}{90+360/\n*(\i-1)}
        
        \coordinate[label = \angle : \lab{\i}, circle] (x\i) at ({\angle}:\R);
    }
     \foreach \i in {1,...,\n} {
        \pgfmathtruncatemacro{\next}{mod(\i,\n)+1}
        \draw (x\i) edge (x\next);
    }
    \coordinate[color=red, label=below:\lab{0}, circle] (x0) at (0,0.13*\R);
    \draw (x0) edge[color=red] (x2) edge[color=red] (x7);
    \clip (-1.5*\R, -1.5*\R) rectangle (1.5*\R, 1.5*\R); 
    \end{tikzpicture}
            \caption{}
            \label{fig:C7+C4}
        \end{subfigure}
        \begin{subfigure}[b]{0.49\textwidth}
            \centering
        \begin{tikzpicture}[baseline=0pt, every node/.style={shape=circle,draw=black,fill,inner sep=0pt,minimum size=6pt}, every edge/.style={line width=1.25pt,draw,black}]

    \def\n{7}
    \def\R{1.6} 
    \deflabel{1}{$3,5$}
    \deflabel{2}{$1,4$}
    \deflabel{3}{$2,3$}
    \deflabel{4}{$4,5$}
    \deflabel{5}{$1,2$}
    \deflabel{6}{$3,4$}
    \deflabel{7}{$1,2$}
    \deflabel{8}{$2,4$}
    \deflabel{9}{$1,5$}

    \foreach \i in {1,...,\n} {
        \pgfmathsetmacro{\angle}{90+360/\n*(\i-1)}
        
        \coordinate[label = \angle : \lab{\i}, circle] (x\i) at ({\angle}:\R);
    }
     \foreach \i in {1,...,\n} {
        \pgfmathtruncatemacro{\next}{mod(\i,\n)+1}
        \draw (x\i) edge (x\next);
    }
    \coordinate[color=red, label=below:\lab{8}, circle] (y0) at (-0.4*\R,0.2*\R);
    \coordinate[color=red, label=below:\lab{9}, circle] (y1) at (0.4*\R,0.2*\R);
    \draw (y0) edge[color=red] (x3) edge[color=red] (y1);
    \draw (y1) edge[color=red] (x6);
    \end{tikzpicture}
        \caption{}
        \label{fig:C7+C4bis}
        \end{subfigure}
        \begin{subfigure}[b]{0.49\textwidth}
            \centering
            \begin{tikzpicture}[every node/.style={shape=circle,draw=black,fill,inner sep=0pt,minimum size=6pt}, every edge/.style={line width=1.25pt,draw,black}]

    \def\n{7}
    \def\R{1.1} 
    \deflabel{1}{$1,2$}
    \deflabel{2}{$3,5$}
    \deflabel{3}{$3,4$}
    \deflabel{4}{$1,2$}
    \deflabel{5}{$4,5$}
    \deflabel{6}{$2,3$}
    \deflabel{7}{$1,4$}
    \deflabel{8}{$1,5$}
    \deflabel{9}{$2,3$}
    \deflabel{10}{$4,5$}
    \deflabel{11}{$4,5$}

    \coordinate[label = \lab{1}, circle] (a) at
        (0,\R);
    \coordinate[label = \lab{2}, circle] (b) at
        (-\R, 1.5*\R);
    \coordinate[label = above left:\lab{3}, circle] (c) at
        (-2*\R, 0.5*\R);
    \coordinate[label = below left:\lab{4}, circle] (d) at
        (-2*\R, -0.5*\R);
    \coordinate[label = below:\lab{5}, circle] (e) at
        (-\R, -1.5*\R);
    \coordinate[label = below:\lab{6}, circle] (f) at
        (0, -\R);
    \coordinate[color=red, label = below:\lab{7}, circle] (g) at
        (\R, -1.5*\R);
    \coordinate[color=red, label = below right:\lab{8}, circle] (h) at
        (2*\R, -0.5*\R);
    \coordinate[color=red, label = above right:\lab{9}, circle] (i) at
        (2*\R, 0.5*\R);
    \coordinate[color=red, label = \lab{10}, circle] (j) at
        (\R, 1.5*\R);
    \coordinate[label = left: \lab{11}, circle] (x) at (0, 0);
    
    \draw (a) edge (b) edge (x);
    \draw (b) edge (c);
    \draw (c) edge (d);
    \draw (d) edge (e);
    \draw (e) edge (f);
    \draw (f) edge[color=red] (g) edge (x);
    \draw (g) edge[color=red] (h);
    \draw (h) edge[color=red] (i);
    \draw (i) edge[color=red] (j);
    \draw (j) edge[color=red] (a);
    \end{tikzpicture}
            \caption{}
            \label{fig:C7+C7(2)}
        \end{subfigure} \hfill
        \begin{subfigure}[b]{0.49\textwidth}
            \centering
            \begin{tikzpicture}[every node/.style={shape=circle,draw=black,fill,inner sep=0pt,minimum size=6pt}, every edge/.style={line width=1.25pt,draw,black}]

    \def\n{7}
    \def\R{1.1} 
    \deflabel{1}{$1,2$}
    \deflabel{2}{$3,4$}
    \deflabel{3}{$1,5$}
    \deflabel{4}{$2,4$}
    \deflabel{5}{$1,3$}
    \deflabel{6}{$2,5$}
    \deflabel{7}{$1,4$}
    \deflabel{8}{$3,5$}
    \deflabel{9}{$3,4$}
    \deflabel{10}{$2,5$}

    \coordinate[label = \lab{1}, circle] (a) at
        (0,1.5*\R);
    \coordinate[label = \lab{2}, circle] (b) at
        (-\R, 1.5*\R);
    \coordinate[label = left:\lab{3}, circle] (c) at
        (-2*\R, 0);
    \coordinate[label = below:\lab{4}, circle] (d) at
        (-\R, -1.5*\R);
    \coordinate[label = below:\lab{5}, circle] (e) at
        (0, -1.5*\R);
    \coordinate[label = below:\lab{6}, circle] (f) at
        (\R, -1.5*\R);
    \coordinate[label = right:\lab{7}, circle] (g) at
        (2*\R, 0);
    \coordinate[label = \lab{8}, circle] (h) at
        (\R, 1.5*\R);
    \coordinate[color=red, label = left: \lab{9}, circle] (x) at (0, \R/2);
    \coordinate[color=red, label = left: \lab{10}, circle] (y) at (0, -\R/2);
    
    \draw (a) edge (b) edge[color=red] (x);
    \draw (b) edge (c);
    \draw (c) edge (d);
    \draw (d) edge (e);
    \draw (e) edge (f) edge[color=red] (y);
    \draw (f) edge (g);
    \draw (g) edge (h);
    \draw (h) edge (a);
    \draw (x) edge[color=red] (y);
    \end{tikzpicture}
            \caption{Contracting the red path yields a graph in $\B$}
            \label{fig:C7+C7(3)}
        \end{subfigure}
        \caption{In \ref{fig:K24} -- \ref{fig:C7+C7(2)}, removing the red suspended path yields a graph in $\B$. In \ref{fig:C7+C7(3)}, it must be contracted. In every case, there is a dominating $2/5$-colouring.}
    \end{figure}

    Every graph in $\mathcal{B}$ except $C_7$ contains a $C_4$ as a subgraph so from now on, when we consider a proper subgraph $\Gamma'$ of $\Gamma$, we only need to check that $\Gamma'\neq C_7$ for the statement of \Cref{thm:5/$2$-technical} to hold.

    \begin{claim}
        \label{claim:5/2-twin-P4}
        $\Gamma$ does not have two twin suspended $3$-paths.
    \end{claim}

    \begin{proofclaim}
        Suppose for the sake of contradiction that $\Gamma$ has two twin suspended $3$-paths $P =  x\link u\link v\link y$ and $P'=x\link u'\link v'\link y$. Let $\Gamma' \coloneqq \Gamma \setminus P$. 
        If $\Gamma' \in \B$ then $\Gamma'$ is isomorphic to $C_7$ since $\Gamma$ is $C_4$-free by \Cref{claim:5/2-C4}, so $\Gamma$ is isomorphic to the graph $\Theta_{3,3,4}$ depicted in \Cref{fig:C7+C4bis}, which is given with a dominating $(5:2)$-colouring, a contradiction.
        We conclude that $\Gamma' \notin \B$, and so $\fdom(\Gamma')\ge 5/2$ (since $\Gamma$ and therefore also $\Gamma'$ have minimum degree at least $2$). Let $\bD$ be a dominating $2/5$-colouring of $\Gamma'$.
        We add $u$ (resp. $v$) to $\bD$ whenever $u'\in \bD$ (resp. $v'\in \bD$), so that we have $\pr{u\in \bD}=\pr{u' \in \bD}=2/5$ and $\pr{u \mbox{ is dominated by } \bD} = \pr{u' \mbox{ is dominated by } \bD} = 1$. The same holds by symmetry for $v$. 
        So $\bD$ is a dominating $2/5$-colouring of $\Gamma$, a contradiction.
    \end{proofclaim}

    \begin{claim}
        \label{claim:5/2-geodesic-P_4}
        For every suspended $3$-path in $\Gamma$, there exists a suspended $2$-path that shares the same endpoints. In other words, every suspended $3$-path of $\Gamma$ is part of a hammock.
    \end{claim}

    \begin{proofclaim}
        Suppose for the sake of contradiction that $\Gamma$ has a suspended $3$-path $P=u \link x\link y\link v$ such that $u$ and $v$ have no common neighbour.
        We construct the graph $\Gamma'$ from $\Gamma$ by contracting $P$ into a single vertex $w$, which must be of degree at least $4$.
        If we assume that $\Gamma' \in \B$, then $\Gamma'$ is therefore one of the three bad graphs containing a vertex of degree $4$, all of which contain \Cref{fig:badgraphs:vertexgluedC4} as a spanning subgraph. We infer that $\Gamma$ contains the graph depicted in \Cref{fig:C7+C7(3)} as a spanning subgraph, which is given with a dominating $(5:2)$-colouring, a contradiction. 

        We may now assume that $\Gamma' \notin \B$, and so there is a dominating $2/5$-colouring $\bD'$ of $\Gamma'$ since $\Gamma'$ has minimum degree at least 2. 
        We may construct a dominating $2/5$-colouring $\bD$ of $\Gamma$ as follows. Let 
        \[\bD_0 \coloneqq \begin{cases}
        \bD'& \mbox{if $w\notin \bD'$}\\
        \bD' \cup \{u,v\} \setminus \{w\} & \mbox{otherwise}.
        \end{cases}\]
        Since $w$ is always dominated by $\bD'$, we always have that at least one of $u$ and $v$ is dominated by $\bD_0$. 
        Observe that $u$ has a neighbour $u' \neq x$ of degree $2$, and 
        \[1 = \pr{N_\Gamma[u']\cap\bD_0\neq \emptyset} \le \frac{4}{5} + \pr{\{u',u\}\cap \bD_0 = \{u\}}.\]
        We infer that $\pr{\{u',u\}\subseteq \bD_0} \le \frac{1}{5} $ and thus $\pr{u \in N(\bD_0) \setminus \bD_0} \ge \frac{1}{5}$.
        Moreover, 
        \[\pr{u \notin N[\bD_0]} + \pr{u \in N(\bD_0) \setminus \bD_0}=\pr{u\notin \bD_0} \le \frac{3}{5},\]
        and so $\pr{u\notin N[\bD_0]}\le \frac{2}{5}.$
        The same holds for $v$, by symmetry.

        Let us write $p\coloneqq \pr{\{u,v\} \subseteq N[\bD_0] \setminus \bD_0}$. We have
        \[ 1 = \pr{w\in \bD'}+ \pr{u\notin N[\bD_0]} + \pr{v\notin N[\bD_0]} + p,\]
        and so  $\pr{u\notin N[\bD_0]} + \pr{v\notin N[\bD_0]} = \frac{3}{5} - p$.
        
        We initially set $\bD \coloneqq \bD_0$.
        If $u$ (resp. $v$) is not dominated by $\bD_0$, we add $x$ (resp. $y$) to $\bD$. 
        If $\{u,v\} \subseteq N[\bD_0]\setminus \bD_0$, we set
        \[ \bD \gets \begin{cases}
            \bD \cup \{x\} &\mbox{if $\pr{v\notin N[\bD_0]}\ge \frac{1}{5}$;}\\
            \bD \cup \{y\} &\mbox{otherwise if $\pr{u\notin N[\bD_0]}\ge \frac{1}{5}$;}\\
            \bD \cup \{\mathbf{z}\} & \mbox{otherwise, where $\mathbf{z}\sim \cU(\{x,y\})$.}
        \end{cases} \]

        We claim that $\bD$ is a dominating $2/5$-colouring of $\Gamma$. It is clear by construction that $\bD$ is a random dominating set of $G$, and that $\pr{z\in \bD} \le 2/5$ for all $z\notin \{x,y\}$. 
        First, assume that $\pr{v\notin N[\bD_0]}\ge \frac{1}{5}$ and so $\pr{u\notin N[\bD_0]}\le \frac{2}{5} - p$. In that case, we have
        \begin{align*}
            \pr{x \in \bD} &= \pr{u \notin N[\bD_0]} + \pr{\{u,v\} \subseteq N[\bD_0] \setminus \bD_0} \\
            &\le \pth{\frac{2}{5}-p} + p = \frac{2}{5}.
        \end{align*}
        Otherwise, assume that $\pr{u\notin N[\bD_0]}\ge \frac{1}{5}$. In that case, we have
        \begin{align*}
            \pr{x \in \bD} &= \pr{u \notin N[\bD_0]} \le \frac{2}{5}.
        \end{align*}
        Finally, assume that $\pr{u\notin N[\bD_0]}\le \frac{1}{5}$. In that case, we have
        \begin{align*}
            \pr{x \in \bD} &= \pr{u \notin N[\bD_0]} + \frac{1}{2}\cdot \pr{\{u,v\} \subseteq N[\bD_0] \setminus \bD_0} \\
            &\le \max_{x\in [0,1/5]} \pth{x + \frac{1}{2}\pth{\frac{3}{5}-x}} = \frac{2}{5}.
        \end{align*}
        We conclude that, in every case, $\pr{x\in \bD}\le \frac{2}{5}$.
        A similar computation can be made to show that the same holds for $\pr{y\in \bD}$.
    \end{proofclaim}

    \begin{claim}
        For every $k\ge 4$, $\Gamma$ has no suspended $k$-path.
    \end{claim}

    \begin{proofclaim}
        Assume for the sake of contradiction that $\Gamma$ has a suspended path $P$ of length at least $4$. 
        
        First assume that $P$ can be chosen in such a way that the graph $\Gamma' \coloneqq \Gamma \setminus P$ is not isomorphic to $C_7$.
        Since $\Gamma$ is $C_4$-free, it follows that $\Gamma' \notin \B$ in that case,  and so $\fdom(\Gamma')\ge 5/2$. Then, by \Cref{lem:strong-pathlemma}, $\fdom(\Gamma) \ge 5/2$, a contradiction. 

        We conclude that regardless of the choice of $P$, $\Gamma'$ is isomorphic to $C_7$. 
        The extremities of $P$ cannot be adjacent by \Cref{claim:5/2-adjacent} nor can then be at distance $3$ in $\Gamma'$ by \Cref{claim:5/2-geodesic-P_4}. Therefore they must be at distance $2$ in $\Gamma'$. The other part of the $7$-cycle forms a suspended $5$-path in $\Gamma$, and removing its internal vertices still yields a graph isomorphic to $C_7$. We conclude that $\Gamma$ is the graph depicted in \Cref{fig:C7+C7(2)}, which is given with a dominating $(5:2)$-colouring, a contradiction.
    \end{proofclaim}

	Observe now that $\Gamma$ cannot be a cycle, since $\{C_4,C_7\}\subseteq \B$, and for every integer $\ell \ge 3$, $\fdom(C_\ell)=\ell/\ceil{\ell/3} \ge 5/2$ unless $\ell \in \{4,7\}$ --- by \Cref{prop:n/gamma} and the fact that $C_\ell$ is vertex-transitive and has $\gamma(C_\ell)=\ceil{\ell/3}$.
	Together with the above claims, this implies that $\Gamma$ satisfies the hypotheses of \Cref{lem:5/2}, and so $\fdom(\Gamma) \ge 5/2$, a contradiction.
\end{proof}

\section{Planar graphs of large girth}
\label{sec:planar}

Successive applications of \Cref{lem:strong-pathlemma} imply that if one can repetitively extract long suspended path from a graph $G$ of minimum degree $2$, until reaching a long cycle, then $\fdom(G)$ is close to $3$. This strategy can be applied when $G$ is a planar graph of large girth, as we show in this section.

\begin{lemma}
\label{lem:suspended-paths-planar}
    Every $2$-connected planar graph $G$ of girth at least $5\ell+1$, of minimum degree at least $2$, and of maximum degree at least $3$ contains a suspended $k$-path for some $k\ge \ell+1$.
\end{lemma}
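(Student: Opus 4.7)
The plan is to consider the planar multigraph $H$ obtained from $G$ by suppressing every degree-$2$ vertex, so that $V(H) = V_3 := \{v \in V(G) : \deg_G(v) \ge 3\}$ and each edge of $H$ encodes either a suspended path of $G$ (if its endpoints in $H$ are distinct) or a cycle of $G$ through a unique vertex of $V_3$ (a loop in $H$); let $\lambda(e)$ denote the length in $G$ of the subdivision path corresponding to $e \in E(H)$. Assume for contradiction that every suspended path of $G$ has length at most $\ell$, i.e.\ $\lambda(e) \le \ell$ for every non-loop edge $e$ of $H$. The strategy is to exhibit a face $F$ of $H$ of length at most $5$ whose boundary uses no loop; the corresponding face of $G$ will then have length $\sum_{e \in \partial F} \lambda(e) \le 5\ell$, contradicting the girth hypothesis $5\ell + 1$.

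Three structural facts about $H$ are needed. First, $|V(H)| \ge 2$: otherwise a single vertex $v$ of $G$ would have degree $\ge 3$ while all others have degree $2$, so the components of $G - v$ would be paths with endpoints among the neighbours of $v$, and $2$-connectivity of $G$ would force $G - v$ to be a single such path, giving $\deg_G(v) = 2$, a contradiction. Second, $H$ has minimum degree at least $3$, since each $v \in V_3$ retains its full $G$-degree in $H$ (with any loop counting twice). Third, and most crucially, $H$ has no loops: if $H$ had a loop at $v$ corresponding to a cycle $C$ of $G$ through $v$ whose other vertices all have degree $2$, then every vertex of $C - v$ would have both its $G$-neighbours on $C$; since $\deg_G(v) \ge 3$ while only two of $v$'s edges lie on $C$, some neighbour $u$ of $v$ lies outside $V(C)$, and $C - v$ is then a component of $G - v$ separate from the one containing $u$, making $v$ a cut vertex of $G$---contradicting $2$-connectivity.

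With these in hand, $H$ is a loopless connected planar multigraph on at least $2$ vertices with minimum degree at least $3$. The classical Euler's formula argument then produces a face $F$ of $H$ with $|\partial F| \le 5$: if every face had length $\ge 6$, combining $\sum_F |\partial F| = 2 m_H$, $n_H \le \tfrac{2}{3} m_H$, and $n_H - m_H + f_H = 2$ would give $\tfrac{m_H}{3} \ge f_H \ge 2 + \tfrac{m_H}{3}$, which is impossible. Since $H$ is loopless, the boundary of $F$ uses only non-loop edges, so the corresponding face of $G$ has length at most $5 \ell$, contradicting the girth bound. The delicate step is ruling out loops in $H$: a short face of $H$ bounded by a loop would not yield a short face of $G$, because the loop would correspond to a cycle of $G$ forced by the girth hypothesis to be long, so without $2$-connectivity the whole argument would collapse.
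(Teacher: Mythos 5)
Your proof is correct and takes essentially the same route as the paper: suppress the degree-$2$ vertices to get a planar multigraph of minimum degree at least $3$, apply Euler's formula to extract a cycle/face of length at most $5$, and observe that it corresponds to a cycle of $G$ built from at most $5$ suspended paths, which the girth hypothesis forbids from all being short. Your explicit check that $2$-connectivity rules out loops in the suppressed multigraph is a detail the paper leaves implicit (it relies on the earlier observation that in a $2$-connected graph every $2$-vertex lies on a suspended path), but the argument is otherwise the same.
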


\begin{proof}
    Let $G'$ be the multigraph obtained from $G$ by contracting every suspended path into a single edge. By construction, $G'$ has minimum degree at least $3$.

    If $G'$ has a multiple edge, we consider it as a $2$-cycle in $G$. Otherwise, it is a well-known consequence of Euler's Formula that every planar (simple) graph of minimum degree at least $3$ has girth at most $5$.
    So there is a cycle $C'$ of length at most $5$ in $G'$, which corresponds to a cycle $C$ of length at least $5\ell+1$ in $G$. By the Pigeonhole Principle, one of the suspended paths that compose $C$ must have length at least $\ell+1$, as desired.
\end{proof}

\begin{thm}
    \label{thm:planar-precise}
    For every $k\ge 2$, every planar graph $G$ of minimum degree at least $2$ and of girth at least $15k-14$ has fractional domatic number at least $3 - 1/k$.   
\end{thm}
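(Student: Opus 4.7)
The plan is to induct on $|V(G)|$ for fixed $k \ge 2$. The base case treats graphs of maximum degree at most $2$: such $G$ is a disjoint union of cycles, each of length at least $15k-14$. Since $\gamma(C_n)=\lceil n/3\rceil$ and cycles are vertex-transitive, \Cref{prop:n/gamma} gives $\fdom(C_n)=n/\lceil n/3\rceil$, and a short case analysis on $n \bmod 3$ shows this is at least $3-1/k$ whenever $n \ge 6k-2$; the inequality $15k-14 \ge 6k-2$, valid for $k \ge 2$, closes the base case.

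For the inductive step I assume $G$ has a vertex of degree at least $3$. The core case is when $G$ is $2$-connected: I would apply \Cref{lem:suspended-paths-planar} with $\ell := 3k-3$, whose hypothesis $5\ell+1 = 15k-14$ matches the girth assumption exactly, to extract a suspended $k'$-path $P$ with $k' \ge \ell+1 = 3k-2$. Writing $G' := G \setminus P$ for the graph obtained by removing the internal vertices and edges of $P$, one checks that $G'$ is planar, has girth at least $15k-14$, has minimum degree at least $2$ (each endpoint of $P$ had degree at least $3$ in $G$ and loses exactly one), and satisfies $|V(G')| < |V(G)|$; the inductive hypothesis gives $\fdom(G') \ge 3 - 1/k$. \Cref{lem:strong-pathlemma}, applied with $m := \lceil k'/3 \rceil \ge k$, then yields $\fdom(G) \ge \min\{(3m-1)/m,\, \fdom(G')\} = \min\{3-1/m,\, 3-1/k\} \ge 3-1/k$, which is the desired bound.

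To reduce the general case to $2$-connectedness, I would handle cut vertices via \Cref{lem:gluing}. When a cut vertex $v$ admits a separation $(G_0, G_1)$ with $\deg_{G_i}(v) \ge 2$ on both sides, each $G_i$ is a smaller valid instance (planarity, girth and minimum degree all inherited), so by induction each has a dominating $r$-colouring with $r = k/(3k-1)$; \Cref{lem:gluing} then combines them, since with $f_0(v) = f_1(v) = 1$ we have $f_0(v)+f_1(v)-r = 2-r > 1$, producing a dominating $r$-colouring of $G$. Leaf blocks that are themselves long cycles attached at a single cut vertex are handled similarly: the base-case cycle analysis supplies a dominating $r$-colouring of the cycle with $f(v) = 1$, and gluing closes the step.

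I expect the main technical obstacle to be the residual configuration where $G$ has a cut vertex $v$ but the natural separation forces $\deg_{G_i}(v) = 1$ on one side, so that side is not directly a valid inductive instance. The way around this should be to walk along degree-$2$ vertices of $G$ starting from such a pendant, iteratively absorbing ``bridge tails'' until one reaches either another vertex of degree at least $3$ (yielding a suspended path that, if long enough, can be removed via \Cref{lem:strong-pathlemma}, and otherwise can be aggregated with an adjacent block) or a long cycle attached at a single branching point (handled via the cycle base case combined with \Cref{lem:gluing}). Once this structural book-keeping is settled, the argument is the clean combination of \Cref{lem:suspended-paths-planar} and \Cref{lem:strong-pathlemma} described above.
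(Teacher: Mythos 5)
Your proposal follows essentially the same route as the paper: reduce to the $2$-connected case, take a single long cycle as the base case (with the same computation of $\fdom(C_n)$ via \Cref{prop:n/gamma}), and otherwise extract a suspended path of length at least $3k-2$ via \Cref{lem:suspended-paths-planar} (using $5(3k-3)+1=15k-14$) and conclude with \Cref{lem:strong-pathlemma}. The paper dispatches the reduction to $2$-connectedness with a one-line appeal to colouring the blocks independently, so your longer discussion of cut vertices and bridge tails is addressing a point the paper glosses over rather than a genuinely different approach.
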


\begin{proof}
    Up to colouring the blocks of $G$ independently, we may assume that $G$ is $2$-connected.
    We prove the result by induction on $G$, the base case being when $G=C_n$  for some $n\ge 15k-14$. It is well known (see \cite[Proposition 2]{GoHe20}) that 
    \[\fdom(C_n) = \frac{n}{\lceil n/3 \rceil} \ge \frac{3n}{n+2} \ge \frac{15k-14}{5k-4} = 3 - \frac{2}{5k-4} \ge 3 - 1/k,\]
    so the base case of the induction is verified.
    If $G$ is not a cycle, then it contains at least one vertex of degree at least $3$. We apply \Cref{lem:suspended-paths-planar} to obtain a suspended path $P$ of length at least $3k-2$ in $G$. Let $G_0$ be obtained by removing the internal vertices of $P$ from $G$. Since $G$ is $2$-connected, the extremities of $P$ are disjoint, and therefore $G_0$ is a planar graph of minimum degree at least $2$. 
    By the induction hypothesis, $\fdom(G_0) \ge $3$-1/k$, and by \Cref{lem:strong-pathlemma}, $\fdom(G) \ge $3$-1/k$, as desired.
\end{proof}

We note that the girth requirement in \Cref{thm:planar-precise} is certainly not optimal: when $k=2$, it is required that the girth of $G$ is at least $16$ to ensure that $\fdom(G) \ge 5/2$, while we can infer from \Cref{thm:5/2} that having girth at least $8$ suffices to ensure that conclusion, even without the planar hypothesis.

\section{Discussion}
\label{sec:conclusion}

\subsection{The proof of \texorpdfstring{\Cref{thm:5/2}}{our main theorem} can be made constructive}

While the proof of \Cref{thm:5/2} is formulated using the probabilistic language, it actually describes a deterministic process in order to construct a probability distribution over the family of dominating sets of a given graph $G$. It would be possible to return a dominating $(5q:2q)$-colouring of $G$ instead, for some value of $q$ with an exponential dependency on $|V(G)|$. To illustrate this, we describe how to obtain a precolouring that mimics the set-up of the proof of \Cref{lem:5/2}.

\begin{lemma}
\label{lem:intersecting-family}
    For every disjoint sets of elements $A,B$, there is a mapping $\phi\colon A\cup B \to 2^{\mathbb{Z}}$ such that, writing $t\coloneqq \left |\bigcup_{x\in A\cup B} \phi(x) \right|$, one has
    \begin{enumerate}[label=(\roman*)]
    \item $|\phi(x)|=\frac{2t}{5}$ for all $x\in A\cup B$; \item $|\phi(a) \cap \phi(a')|=\frac{t}{5}$ for every pair $\{a,a'\} \in \binom{A}{2}$;
    \item  $|\phi(b) \cap \phi(x)|=\frac{4t}{25} $ for every $b \in B$ and $x\in A\cup B \setminus\{b\}$.
    \end{enumerate}
\end{lemma}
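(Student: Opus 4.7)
We construct $\phi$ explicitly by deterministically mimicking the random process from the proof of \Cref{lem:5/2}. Writing $n = |A|$ and $m = |B|$, consider the universe $\Omega := [5] \times \{0,1\}^A \times [5]^B$, of size $|\Omega| = 5 \cdot 2^n \cdot 5^m$. For each $a \in A$ and $b \in B$, define
\[
\phi(a) := \{(\gamma,\alpha,\beta) \in \Omega : \gamma \le 4 \text{ and } \alpha(a) = 1\}, \qquad \phi(b) := \{(\gamma,\alpha,\beta) \in \Omega : \beta(b) \in \{1,2\}\}.
\]
The interpretation is that $\gamma \in [5]$ decides whether $\bB_1$ is empty (when $\gamma = 5$, accounting for the probability-$1/5$ event), $\alpha \in \{0,1\}^A$ encodes the Bernoulli-$(1/2)$ selections inside $\bB_1$ when $\gamma \le 4$, and $\beta \in [5]^B$ encodes the independent $2/5$-selections in $B$ (the element $b$ being selected exactly when $\beta(b) \in \{1,2\}$).

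A direct counting argument then yields
\[
|\phi(x)|/|\Omega| = 2/5, \qquad |\phi(a) \cap \phi(a')|/|\Omega| = 1/5, \qquad |\phi(b) \cap \phi(x)|/|\Omega| = 4/25,
\]
matching the marginals and pairwise correlations of the random process. So the desired ratios are achieved \emph{relative to $|\Omega|$}. To conclude, it remains to argue that $t := |\bigcup_{x} \phi(x)| = |\Omega|$, for the ratios $2t/5$, $t/5$ and $4t/25$ then coincide with the counts we computed above.

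The main obstacle is precisely this last step. A slot $(\gamma, \alpha, \beta) \in \Omega$ lies outside $\bigcup \phi$ exactly when $\beta(b) \in \{3,4,5\}$ for all $b \in B$ \emph{and} either $\gamma = 5$ or $\alpha = \mathbf{0}$; in general such uncovered slots do exist. We plan to handle them by a combinatorial surgery that simultaneously shrinks the universe and the sets $\phi(x)$ by the same factor: either identify each uncovered slot with a compatible covered slot (thereby reducing $|\Omega|$ while leaving $|\phi(x)|$ unchanged is insufficient, so this identification must be coupled with a controlled deletion of $2u/5$ elements from each $\phi(x)$, where $u$ counts the uncovered slots), or equivalently iterate the construction across independent copies of $\Omega$ and delete a synchronised collection of ``reserve'' slots so that the final universe equals $\bigcup \phi$. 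The delicate point to verify is that the deletions can be chosen to preserve the pairwise intersection counts exactly; this relies on the divisibility $25 \mid |\Omega|$ and on the permutation-symmetry of the $\gamma$ and $\beta$ coordinates, which lets us pick the removed slots uniformly from symmetry classes.
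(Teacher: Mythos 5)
Your universe and your sets are, up to relabelling, exactly the ones in the paper's proof: the paper takes $\Omega := [2]^A \times [5]^{B\cup\{\beta\}}$ for a dummy element $\beta\notin A\cup B$, which is your $[5]\times\{0,1\}^A\times[5]^B$ with the leading $[5]$ factor absorbed as the $\beta$-coordinate, and it defines $\phi(a)=\{w: w(a)=1 \mbox{ and } w(\beta)\in[4]\}$ and $\phi(b)=\{w: w(b)\in[2]\}$, matching your definitions; the four cardinality computations are the same. Where you diverge is on the identity $t=|\Omega|$. The paper simply \emph{sets} $t\coloneqq|\Omega|$ and stops; it makes no attempt to show $\bigcup_x\phi(x)=\Omega$, and you are right that this union is proper (any $w$ with $w(\beta)=5$ and $w(b)\ge 3$ for all $b\in B$ is uncovered). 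So on the literal reading of the statement, where $t$ is the number of colours actually used, you have spotted a real imprecision in the lemma; the intended reading --- the only one consistent with its role as a precolouring of the $3^+$-vertices that is subsequently extended to the $2$-vertices, which can absorb the unused colours --- is that $t$ is the size of the ambient palette $\Omega$. Under that reading your proof is already complete once the four counts are verified.

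The step that does not work is the ``surgery,'' and since you present it as the remaining content of the proof, this is a genuine gap. It is not carried out, and as sketched it cannot succeed directly: every uncovered slot lies in \emph{no} $\phi(x)$, so if you delete a set $S$ containing all uncovered slots and want the surviving universe $T=\Omega\setminus S$ to satisfy $|\phi(x)\cap T|=\frac{2}{5}|T|$ for every $x$ together with the two intersection conditions, then $S$ itself must meet each $\phi(x)$ in exactly $\frac{2}{5}|S|$ elements and each pairwise intersection in the prescribed proportion --- that is, the restriction of the design to $S$ must be another instance of the very object you are trying to build, with the uncovered slots grafted on. Neither the divisibility $25\mid|\Omega|$ nor the permutation symmetry of the $\gamma$ and $\beta$ coordinates produces such an $S$ for free, and no candidate is exhibited. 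The clean fix is to abandon the surgery and instead either restate the conclusion with $t$ equal to the size of the palette (which is how the lemma is used), or add one line observing that leftover colours are harmless in the intended application.
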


\begin{proof}
    Let $\beta$ denote an element not in $A\cup B$.
    Let $\Omega \coloneqq [2]^A \times [5]^{B \cup \{\beta\}}$; we have $t\coloneqq |\Omega| = 2^{|A|}5^{|B|+1}$.
    We let $\phi\colon A\cup B \to \Omega$ be defined by
    \begin{align*}
        \phi(a) &\coloneqq \sst{w \in \Omega}{w(a)=1 \mbox{ and } w(\beta)\in [4]}& \mbox{for every $a\in A$, and}\\
        \phi(b) &\coloneqq \sst{w\in \Omega}{w(b)\in [2]} & \mbox{for every $b\in B$.}
    \end{align*}
    Then by construction we have
    \begin{align*}
        |\phi(a)| &= 2^{|A|-1}5^{|B|}\times 4 = \frac{2t}{5} & \mbox{for every } a\in A,\\
        |\phi(b)| &= 2^{|A|+1}5^{|B|-1} = \frac{2t}{5} & \mbox{for every }  b\in B,\\
        |\phi(a) \cap \phi(a')| &= 2^{|A|-2}5^{|B|}\times 4 = \frac{t}{5} & \mbox{for every pair } (a,a') \in\binom{A}{2},\\
        |\phi(b) \cap \phi(x)| &= \frac{2}{5}|\phi(x)|=\frac{4t}{25} & \mbox{for every }  b\in B \mbox{ and } x\in A\cup B\setminus\{b\},
    \end{align*}
    as desired.
\end{proof}

\begin{rk}
    When $B=\emptyset$, it is possible to obtain such a colouring $\phi$ with $t = O(|A|)$.
    To do this, we let $m>1$ be the smallest integer such that $4m-1 \ge |A|$ is a prime power. It has been established in \cite{Bre32} that, for every integer $n\ge 2$, the interval $[n,2n]$ contains at least one prime number $p$ of the form $4m-1$, so we infer that $4m-1 < 2|A|$.
    Then by \cite[Corollary 4.7]{Sti08} together with \cite[Theorem 4.5]{Sti08} there exists a $(4t-1, 2t-1, t-1)$-BIBD $\cS$. That is, $\cS$ consists of a ground set $S$ of $4t-1$ points and a set $B \subseteq \binom{S}{2t}$ of blocks, each of which contains $2t-1$ points, and such that each pair of points is contained in $t-1$ blocks. 
    By \cite[Theorem 1.8]{Sti08}, each point belongs to $2t-1$ blocks; and by \cite[Theorem 1.9]{Sti08}, $|B|=4m-1$.
    Let us add to $\cS$ a block that contains all points and denote $B'\coloneqq B \cup \{S\}$ the new set of blocks; now there are $4t$ blocks, each point belongs to $2t$ blocks, and each pair of points is contained in $t$ blocks.
    Let $f\colon A \to S$ be an injection from $A$ to the set of points of $\cS$. 
    We let $\Omega \coloneqq B' \times [5]$ and $\phi\colon A \to \Omega$ be defined by $\phi(a) \coloneqq \sst{(b,x) \in B'\times [5]}{f(a) \in b \et x\le 4}$. Then $\phi$ satisfies the conclusion of \Cref{lem:intersecting-family} with $t=20m < 10|A|$.
\end{rk}

\subsection{NP-hardness of fractional dominating colouring}

It has been established in \cite{Kap94} that computing the domatic number of a graph is NP-hard. This follows from a reduction from \textsc{$3$-Colour} through the following construction.

\begin{defi}
    Given a graph $G$, we let $\cS(G)$ be the graph consisting of the edge-union of the subdivision $G^{1/2}$ of $G$, and a complete graph on $V(G)$. So $\cS(G)$ has $V(G)\cup E(G)$ as a vertex set, where for every $uv\in E(G)$, $N_{\cS(G)}(uv)=\{u,v\}$, and $uv\in E(\cS(G))$ for every $u,v\in V(G)$.
\end{defi}

Observe that $(V(G),E(G))$ is a partition of $V(\cS(G))$ into a clique and an independent set. Therefore $\cS(G)$ is a split graph, and in particular also a chordal graph, for any graph $G$. 

\begin{prop}[Karp, 1994]
\label{prop:reduction-dom}
    For every graph $G$ of minimum degree at least $1$,
    \[ \chi(G) \le 3 \iff \dom(\cS(G)) \ge 3. \]
\end{prop}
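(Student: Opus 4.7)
The plan is to prove the two directions of the equivalence through direct constructions that exploit the structure of $\cS(G)$: the set $V(G)$ induces a clique in $\cS(G)$, the set $E(G)$ is an independent set, and each edge-vertex $uv \in E(G)$ has closed neighborhood exactly $\{u, v, uv\}$ in $\cS(G)$, which has size $3$.

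For the implication ``$\dom(\cS(G)) \ge 3 \Rightarrow \chi(G) \le 3$'', let $(D_1, D_2, D_3)$ be a domatic partition of $\cS(G)$. For every edge $uv \in E(G)$, each of the three pairwise disjoint dominating sets must intersect the closed neighborhood $\{u, v, uv\}$, so these three vertices lie in three distinct parts of the partition. In particular $u$ and $v$ lie in distinct parts, hence the mapping $c\colon V(G) \to [3]$ defined by $c(x) \coloneqq i$ whenever $x \in V(G) \cap D_i$ is a proper $3$-coloring of $G$.

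For the converse, start with any proper coloring $c \colon V(G) \to [3]$ of $G$. Since $G$ has minimum degree at least $1$, $|V(G)| \ge 2$; the case $|V(G)|=2$ reduces to $G=K_2$, for which $\cS(K_2) = K_3$ and $\dom(K_3)=3$, so we may assume $|V(G)| \ge 3$ and, after possibly splitting a color class, that $c$ uses all three colors. Define
\[ D_i \coloneqq c^{-1}(i) \cup \sst{uv \in E(G)}{i \notin \{c(u), c(v)\}} \qquad \mbox{for $i \in [3]$.}\]
Since $c$ is proper, each edge-vertex $uv$ belongs to exactly one $D_i$ (namely the unique color absent from $\{c(u), c(v)\}$), so $(D_1, D_2, D_3)$ is a partition of $V(\cS(G))$. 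Each $D_i$ is dominating in $\cS(G)$: every $v \in V(G)$ is dominated because $V(G)$ is a clique and $D_i \cap V(G) = c^{-1}(i)$ is nonempty; and every edge-vertex $uv$ is dominated because its closed neighborhood $\{u, v, uv\}$ meets each $D_j$ exactly once by construction.

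No deep obstacle is anticipated; the only subtle point is ensuring that the proper $3$-coloring uses all three colors, so that every part of the resulting partition is nonempty and hence dominating --- this is where the hypothesis $\delta(G) \ge 1$ is used (together with the separate verification of $G=K_2$).
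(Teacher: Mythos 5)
Your proof is correct, and it is the exact integral analogue of the approach the paper takes for the fractional version (\Cref{prop:reduction-fdom}): each closed neighbourhood $\{u,v,uv\}$ of an edge-vertex forces the three parts to separate $u$ and $v$, and conversely each edge-vertex is assigned the colour missing from its endpoints, with the clique on $V(G)$ handling the original vertices. The paper itself only cites Karp for this statement and gives no proof, so there is nothing to diverge from; your extra care in ensuring all three colour classes are nonempty (splitting a class, and checking $\cS(K_2)=K_3$) is a genuine point that the fractional argument does not need and that a careless write-up would miss.
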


Observe that, by construction, $\delta(\cS(G))=2$. So Proposition~\ref{prop:reduction-dom} translates to: $G$ is $3$-colourable if and only if $\cS(G)$ is domatically full. 

We show that the fractional relaxation of Proposition~\ref{prop:reduction-dom} holds as well, which proves that, given a graph $G$, deciding whether $\fdom(G)\ge 3$ is NP-hard. Indeed, deciding whether $\chi_f(G)\le 3$ is NP-hard (see e.g. \cite[Theorem 3.9.2]{ScUl13}).

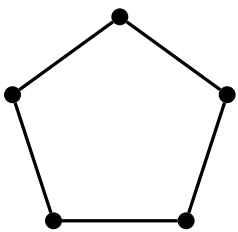
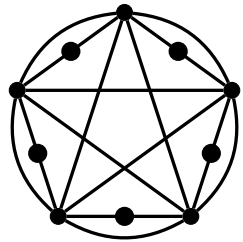
\begin{figure}[!htbp]
    \centering
     \begin{subfigure}[t]{0.45\textwidth}
    \centering
        \begin{tikzpicture}[every node/.style={shape=circle,draw=black,fill,inner sep=0pt,minimum size=6pt}, every edge/.style={line width=1.25pt,draw,black}]
        \graph[clockwise, radius=1.5cm] {subgraph C_n [V={a, b, c, d, e},empty nodes,name=A]  };

        \end{tikzpicture}
        \caption{$C_5$}
        \label{fig:ourcontribution:fractionalcolouringdomatic:C5}
    \end{subfigure}
    \hfill
    \begin{subfigure}[t]{0.45\textwidth}
    \centering
        \begin{tikzpicture}[every node/.style={shape=circle,draw=black,fill,inner sep=0pt,minimum size=6pt}, every edge/.style={line width=1.25pt,draw,black}]
        \graph[clockwise, radius=1.5cm] {
        subgraph C_n [edge node={node [ midway] {}},V={a, b, c, d, e},empty nodes,name=A];
        };
        \draw[] (A a) edge [bend left] (A b);
        \draw[] (A b) edge [bend left] (A c);
        \draw[] (A c) edge [bend left] (A d);
        \draw[] (A d) edge [bend left] (A e);
        \draw[] (A e) edge [bend left] (A a);

        \draw[] (A a) edge  (A d);
        \draw[] (A a) edge  (A c);

        \draw[] (A b) edge  (A d);
        \draw[] (A b) edge  (A e);

        \draw[] (A e) edge  (A c);
        
        \end{tikzpicture}
        \caption{$\mathcal{S}(C_5)$.}
        \label{fig:ourcontribution:fractionalcolouringdomatic:1subdivision}
    \end{subfigure}
    \hfill
  
    \caption{$C_5$ and $\mathcal{S}(C_5)$.}
    \label{fig:ourcontribution:S(C5)}
\end{figure}

\begin{prop}
\label{prop:reduction-fdom}
     For every graph $G$ of minimum degree at least $1$,
    \[ \chi_f(G) \le 3 \iff \fdom(\cS(G)) \ge 3. \]
\end{prop}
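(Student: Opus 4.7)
The plan is to prove both implications directly by converting between a fractional $3$-coloring of $G$ and a dominating $(3q:q)$-coloring of $\cS(G)$. The key structural observation is that in $\cS(G)$, every subdivision vertex $uv$ has a closed neighborhood $\{u,v,uv\}$ of size exactly $3$, and $\delta(\cS(G))=2$, so $\fdom(\cS(G))\le 3$ by \Cref{prop:d+1}. In particular, $\fdom(\cS(G)) \ge 3$ forces equality, hence the existence of a dominating $(3q:q)$-colouring of $\cS(G)$ for some positive integer $q$.

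For the $(\Rightarrow)$ direction, I would start from a proper $(3q:q)$-colouring $\psi$ of $G$, and extend it to $\cS(G)$ by setting $\phi(v)\coloneqq\psi(v)$ for $v\in V(G)$ and $\phi(uv)\coloneqq [3q]\setminus(\psi(u)\cup\psi(v))$ for each $uv\in E(G)$. Since $\psi(u)$ and $\psi(v)$ are disjoint $q$-sets, $\phi(uv)$ is a well-defined $q$-set. Verification splits into two cases. Each subdivision vertex $uv$ has closed neighbourhood $\{u,v,uv\}$, and by construction $\phi(u)\cup\phi(v)\cup\phi(uv)=[3q]$. Each original vertex $v\in V(G)$ has closed neighbourhood containing all of $V(G)$ (the clique) together with every subdivision vertex $vw$ for $w\in N_G(v)$. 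Fix a colour $c\in[3q]$: either $c\in\psi(w)$ for some $w\in V(G)$, in which case $c$ already appears in $\phi(V(G))$, or $c$ appears in no $\psi(w)$, in which case $c\in\phi(vw)$ for every neighbour $w$ of $v$ in $G$ (and such a neighbour exists since $\delta(G)\ge 1$). So $\phi$ is a dominating $(3q:q)$-colouring of $\cS(G)$, giving $\fdom(\cS(G))\ge 3$. The role of the hypothesis $\delta(G)\ge 1$ is exactly to guarantee the existence of this neighbour.

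For the $(\Leftarrow)$ direction, as outlined above, $\fdom(\cS(G)) \ge 3$ combined with $\fdom(\cS(G))\le \delta(\cS(G))+1 = 3$ yields a dominating $(3q:q)$-colouring $\phi$ of $\cS(G)$ for some $q\ge 1$. For every edge $uv\in E(G)$, the condition $\phi(u)\cup\phi(v)\cup\phi(uv)=[3q]$ on the closed neighbourhood of the subdivision vertex $uv$, together with $|\phi(u)|+|\phi(v)|+|\phi(uv)|=3q$, forces the three $q$-sets to be pairwise disjoint by inclusion–exclusion. In particular $\phi(u)\cap\phi(v)=\emptyset$ for every edge $uv$ of $G$, so the restriction $\phi\rvert_{V(G)}$ is a proper $(3q:q)$-colouring of $G$, whence $\chi_f(G)\le 3$.

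There is no real obstacle: both directions reduce to an elementary cardinality argument exploiting that the subdivision vertices are precisely the minimum-degree vertices of $\cS(G)$ and have closed neighbourhoods of size $3$. The only subtlety is the forward direction, where one must confirm that colours missed by $\psi$ on $V(G)$ are nevertheless recovered in the closed neighbourhood of each $v\in V(G)$ via the subdivision vertices; this is where the assumption $\delta(G)\ge 1$ is used.
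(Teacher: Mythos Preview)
Your proposal is correct and follows essentially the same approach as the paper: in both directions you use the same construction (extend a proper $(3q:q)$-colouring of $G$ by setting $\phi(uv)=[3q]\setminus(\phi(u)\cup\phi(v))$, and conversely restrict a dominating $(3q:q)$-colouring of $\cS(G)$ to $V(G)$). The only minor differences are stylistic: the paper verifies domination at an original vertex $v$ via the containment $N_{\cS(G)}[uv]\subseteq N_{\cS(G)}[v]$ rather than your colour-by-colour case split, and the paper does not bother to invoke the upper bound $\fdom(\cS(G))\le 3$ before asserting the existence of a dominating $(3q:q)$-colouring.
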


\begin{proof}
    If $\fdom(\cS(G)) \ge 3$, let $\phi$ be a dominating $(3q:q)$-colouring of $\cS(G)$. Then for every $uv \in E(G)$, we must have that $\phi(uv)$, $\phi(u)$, and $\phi(v)$ are disjoint. In particular, $\restrict{\phi}{V(G)}$ is a proper $(3q:q)$-colouring of $G$, hence $\chi_f(G) \le 3$.

    Conversely, if $\chi_f(G) \le 3$, let $\phi$ be a proper $(3q:q)$-colouring of $G$. We extend $\phi$ to $\cS(G)$ by letting $\phi(uv) \coloneqq [3q]\setminus (\phi(u)\cup \phi(v))$, for every $uv \in E(G)$, and we claim that $\phi$ is now a dominating $(3q:q)$-colouring of $\cS(G)$.
    Let $uv \in E(G)$. Since $\phi(u)$ and $\phi(v)$ are disjoint, we infer that $|\phi(uv)|=q$. Furthermore, $\phi(uv)$ is disjoint from $\phi(u)$ and $\phi(v)$, by construction, therefore all $3q$ colours appear in $N_{\cS(G)}[uv]$, and also in $N_{\cS(G)}[v]$. Since $G$ has minimum degree $1$, all vertices are incident to some edge, therefore $N_{\cS(G)}[v]$ contains all $3q$ colours, for every $v \in V(G)$.
    Thus, we obtain a dominating $(3q:q)$-colouring of $\cS(G)$.
\end{proof}

The \emph{join} of two graphs $G$ and $H$, denoted $G+H$,
is the graph obtained from the disjoint union of $G$ and $H$ by adding all edges between $V(G)$ and $V(H)$. 

\begin{lemma}\label{lem:join-Kt}
    Let $G$ be a graph, and $t\ge 1$ an integer. Then $\fdom(G + K_t) = \fdom(G) + t$
\end{lemma}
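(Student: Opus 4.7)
The plan is to prove the identity by establishing both inequalities separately. The upper bound $\fdom(G+K_t) \le \fdom(G) + t$ will be obtained via duality by exhibiting a fractional bottleneck of weight $\fdom(G)+t$, in the spirit of \Cref{prop:d+1,prop:n/gamma}. The lower bound $\fdom(G+K_t) \ge \fdom(G)+t$ will be obtained via the probabilistic formulation \eqref{eq:fdom-proba}, by combining an optimal random dominating set of $G$ with the $t$ singleton dominating sets afforded by the vertices of $K_t$, which are universal in $G+K_t$.

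For the upper bound, I would take an optimal fractional bottleneck $w_G$ of $G$, of total weight $\fdom(G)$, and extend it to $G+K_t$ by setting $w(v) \coloneqq w_G(v)$ for every $v \in V(G)$ and $w(v) \coloneqq 1$ for every $v \in V(K_t)$, which clearly sums to $\fdom(G)+t$. The verification that $w$ is a fractional bottleneck of $G+K_t$ hinges on a simple dichotomy: any dominating set $D$ of $G+K_t$ either intersects $V(K_t)$, in which case $w(D) \ge 1$ trivially, or is contained in $V(G)$, in which case $D$ must in particular dominate $V(G)$ and is therefore a dominating set of $G$, yielding $w(D) = w_G(D) \ge 1$.

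For the lower bound, let $\bD_G$ be a random dominating set of $G$ satisfying $\pr{v \in \bD_G} \le 1/\fdom(G)$ for every $v \in V(G)$, and enumerate $V(K_t)$ as $v_1, \ldots, v_t$. Setting $p \coloneqq 1/(\fdom(G)+t)$, I would define a random subset $\bD$ of $V(G+K_t)$ by taking $\bD = \{v_i\}$ with probability $p$ for each $i \in [t]$, and $\bD = \bD_G$ with the remaining probability $1 - tp = p \cdot \fdom(G)$. Each singleton $\{v_i\}$ is a dominating set of $G+K_t$ because $v_i$ is universal, and $\bD_G$ also dominates $G+K_t$ since it already dominates $G$ and every vertex of $K_t$ is adjacent to every vertex of $V(G)$ (using that $\bD_G$ is non-empty when $V(G)\neq\emptyset$). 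A direct computation then yields $\pr{v_i \in \bD} = p$ for each $v_i \in V(K_t)$ and $\pr{v \in \bD} \le p\cdot\fdom(G)\cdot 1/\fdom(G) = p$ for every $v \in V(G)$, whence $\fdom(G+K_t) \ge 1/p = \fdom(G)+t$ by \eqref{eq:fdom-proba}.

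I do not foresee any substantial obstacle: the argument rests entirely on the fact that vertices of $K_t$ are universal in the join, which collapses the structure of dominating sets to the clean dichotomy used in both directions. The only mild care required is the degenerate case $V(G)=\emptyset$, which reduces to $\fdom(K_t)=t$ and can be handled in a single line, together with verifying that the probabilities $p$ and $1-tp$ are indeed in $[0,1]$, which follows from $\fdom(G)\ge 0$.
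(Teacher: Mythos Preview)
Your proof is correct, but it takes a different route from the paper's. The paper argues both inequalities directly in the dominating $(p:q)$-colouring formulation~\eqref{eq:fdom-colouring}: for the lower bound, it extends a dominating $(a:b)$-colouring of $G$ to $G+K_t$ by spending $bt$ fresh colours on the $t$ vertices of $K_t$; for the upper bound, it starts from an optimal dominating $(a:b)$-colouring of $G+K_t$, discards the at most $bt$ colours used on $V(K_t)$, and observes that each remaining colour class is a dominating set of $G$, yielding $\fdom(G) \ge (a-bt)/b$. By contrast, you obtain the upper bound via duality by extending an optimal fractional bottleneck of $G$ with weight~$1$ on each vertex of $K_t$, and the lower bound via the probabilistic formulation~\eqref{eq:fdom-proba} by mixing an optimal random dominating set of $G$ with the $t$ universal singletons. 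Your upper-bound argument is arguably cleaner --- the dichotomy ``$D$ meets $V(K_t)$ or $D$ is a dominating set of $G$'' is immediate --- and it nicely exercises the duality framework of \Cref{sec:upper-bounds}; the paper's approach, on the other hand, stays within a single formulation and avoids invoking strong duality.
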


\begin{proof}
    Let $\phi$ be any dominating $(a:b)$-colouring of $G$. We easily extend $\phi$ to $G+K_t$ by using $bt$ additional colours, resulting in a dominating $(a+bt:b)$-colouring of $G$. Thus, $\fdom(G+K_t) \ge \fdom(G) + t$. Conversely, let $\phi$ be a dominating $(a:b)$-colouring of $G+K_t$ such that $a/b = \fdom(G+K_t)$; by the previous point, we have $a/b \ge \fdom(G) + t$, thus $a \ge bt$. Let us ignore the colours of $\phi(V(K_t))$, of which there are at most $bt$: the remaining $a-bt$ colours are dominating sets of $G$, so we obtain a dominating $(a-bt:b)$-colouring of $G$. Thus, $\fdom(G) \ge \fdom(G+K_t) - t$. With both inequalities proven, we have shown that $\fdom(G+K_t) = \fdom(G) + t$.
\end{proof}

Observe that if $G$ is a split graph, then $G + K_t$ is also a split graph. Using \Cref{prop:reduction-fdom,lem:join-Kt} together with the NP-hardness of deciding $\chi_f \ge 3$, we deduce the following.

\begin{cor}
    \label{cor:NP-hard}
    Let $k \ge 3$ be a fixed integer. Given a split graph $G$, it is NP-hard to decide whether $\fdom(G) \ge k$.
\end{cor}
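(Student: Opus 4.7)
The plan is to give a polynomial-time reduction from \textsc{Fractional $3$-Colourability} (which is NP-hard by \cite[Theorem 3.9.2]{ScUl13}) to the problem of deciding whether $\fdom(H)\ge k$ on split graphs $H$, for any fixed $k\ge 3$. The key tools are already in hand: \Cref{prop:reduction-fdom} handles the case $k=3$ via the construction $\cS(G)$, and \Cref{lem:join-Kt} lets us shift the fractional domatic number up by an arbitrary positive integer by joining with a clique. So the natural composite construction is $H \coloneqq \cS(G) + K_{k-3}$.

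First I would verify, from an instance $G$ of \textsc{Fractional $3$-Colourability}, that without loss of generality $G$ has minimum degree at least $1$ (isolated vertices of $G$ do not affect $\chi_f(G)$, so they can be removed in polynomial time; this is needed to apply \Cref{prop:reduction-fdom}). Then $H = \cS(G) + K_{k-3}$ can clearly be built in polynomial time in $|V(G)|+|E(G)|$.

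Next I would check that $H$ is a split graph. Recall that $\cS(G)$ is split with partition $(V(G), E(G))$, where $V(G)$ is a clique and $E(G)$ is independent. Taking the join with $K_{k-3}$ adds $k-3$ new vertices, each adjacent to every vertex of $\cS(G)$ and to each other; these new vertices, together with $V(G)$, form a clique, while $E(G)$ remains an independent set. Hence $(V(G)\cup V(K_{k-3}),\, E(G))$ witnesses that $H$ is split.

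Finally I would chain the equivalences: by \Cref{lem:join-Kt} we have $\fdom(H) = \fdom(\cS(G)) + (k-3)$, so $\fdom(H)\ge k$ iff $\fdom(\cS(G))\ge 3$, and by \Cref{prop:reduction-fdom} this holds iff $\chi_f(G)\le 3$. This establishes the reduction and hence the NP-hardness. There is no real obstacle here — the lemmas do all the work; the only mild subtlety is noting that one may assume $\delta(G)\ge 1$ in the source instance so that \Cref{prop:reduction-fdom} applies, and verifying the split structure is preserved under the join with $K_{k-3}$.
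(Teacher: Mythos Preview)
Your proposal is correct and follows exactly the approach the paper takes: reduce from \textsc{Fractional $3$-Colourability}, pass to $\cS(G)$ via \Cref{prop:reduction-fdom}, and then join with $K_{k-3}$ via \Cref{lem:join-Kt}, noting that split graphs are closed under joins with cliques. The paper's own argument is terser (it does not spell out the $\delta(G)\ge 1$ normalisation or the explicit split partition of $H$), but the content is the same.
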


\paragraph{Domatically full regular graphs}
We recall that a given graph $G$ is \emph{domatically full} if $\dom(G)=\delta(G)+1$, and we say that it is \emph{fdom-full} if $\fdom(G)=\delta(G)+1$. 
We begin with a simple observation concerning regular graphs. 

\begin{prop}
\label{prop:dom-full}
    Let $G$ be a $d$-regular graph. Then $G$ is domatically full if and only if $\chi(G^2)=d+1$. It is fdom-full if and only if $\chi_f(G^2)=d+1$.
\end{prop}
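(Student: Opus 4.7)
The plan is to exploit the simple observation that when $G$ is $d$-regular, every closed neighbourhood $N_G[v]$ has exactly $d+1$ vertices and, by definition of the square graph, forms a clique of size $d+1$ in $G^2$. This clique immediately gives the inequalities
\[ d+1 \;=\; \omega(G^2) \;\le\; \chi_f(G^2) \;\le\; \chi(G^2), \]
so both ``$\chi(G^2) = d+1$'' and ``$\chi_f(G^2) = d+1$'' can be rewritten as upper-bound statements. Combined with the trivial bound $\fdom(G) \le \dom(G) \le \delta(G)+1 = d+1$ (from \Cref{prop:d+1}), each of the two equivalences in the statement reduces to a matching upper or lower bound.

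First I would handle the integral case. Given a dominating $(d+1)$-colouring $\phi \colon V(G) \to [d+1]$, the requirement that $\phi(N_G[v]) = [d+1]$ for every vertex $v$ together with $|N_G[v]| = d+1$ forces the $d+1$ vertices of $N_G[v]$ to receive pairwise distinct colours. Since any two vertices at distance at most $2$ in $G$ lie in a common closed neighbourhood, they receive distinct colours, so $\phi$ is a proper $(d+1)$-colouring of $G^2$. Conversely, any proper $(d+1)$-colouring of $G^2$ restricted to $V(G)$ uses distinct colours on every clique of $G^2$, hence on every $N_G[v]$, which produces all $d+1$ colours in each closed neighbourhood of $G$; thus it is a dominating $(d+1)$-colouring of $G$.

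The fractional case follows the same pattern. Suppose $\fdom(G) = d+1$, certified by a dominating $(p:q)$-colouring $\phi \colon V(G) \to \binom{[p]}{q}$ with $p = q(d+1)$ (such a colouring exists by the definition in \eqref{eq:fdom-colouring}). For every $v$, the $d+1$ vertices of $N_G[v]$ together contribute at most $q(d+1) = p$ colours (counted with multiplicity), and covering all $p$ colours forces this bound to be tight, i.e.\ forces the colour sets on $N_G[v]$ to be pairwise disjoint. Hence any two vertices adjacent in $G^2$ receive disjoint $q$-subsets of $[p]$, which is precisely a proper $(p:q)$-colouring of $G^2$ of ratio $d+1$, so $\chi_f(G^2) \le d+1$. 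Conversely, from a proper $(p:q)$-colouring of $G^2$ with $p = q(d+1)$, the $d+1$ pairwise disjoint $q$-subsets assigned to any $N_G[v]$ automatically cover all $p$ colours, yielding a dominating $(p:q)$-colouring of $G$, so $\fdom(G) \ge d+1$.

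There is essentially no obstacle: the whole argument is a clean bookkeeping reduction that turns the equality case of the Pigeonhole Principle inside $N_G[v]$ into the equivalence between ``all colours appear'' (domatic language) and ``all colours are distinct/disjoint'' (chromatic language on $G^2$). The only small care needed is that in both directions we use the tight upper bound $\fdom(G) \le d+1$ (resp.\ $\dom(G) \le d+1$) on one side and the tight lower bound $\chi_f(G^2) \ge d+1$ (resp.\ $\chi(G^2) \ge d+1$) on the other, so each half of each equivalence only has to produce a colouring achieving the common value $d+1$.
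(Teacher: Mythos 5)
Your proposal is correct and follows essentially the same route as the paper: both show that a $(p:q)$-colouring with $p=q(d+1)$ is dominating for $G$ if and only if it is a proper colouring of $G^2$ (via the equality case of the Pigeonhole Principle on each $N_G[v]$), and then combine this with the tight bounds $\dom(G)\le\fdom(G)\le d+1$ and $\chi(G^2)\ge\chi_f(G^2)\ge\omega(G^2)\ge d+1$. One small slip: you wrote $\fdom(G)\le\dom(G)$, which reverses the correct inequality $\dom(G)\le\fdom(G)$, but since your argument only uses that both quantities are at most $d+1$, this does not affect the proof.
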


\begin{proof}
    Let $\phi$ be a dominating $(p:q)$-colouring of $G$ with $p = q(d+1)$. If there is a vertex $v\in V(G)$ and two vertices $u,u' \in N[v]$ such that $N(u)\cap N(u') \neq \emptyset$, then $|\phi(N[v])|\le (d+1)q-1 < p$.
    Conversely, if $|\phi(N[v])| < p =q(d+1)$ for some integer $v$, then by the Pigeonhole Principle there are two vertices $u,u'\in N[v]$ with $\phi(u)\cap \phi(v) \neq \emptyset$. 
    We conclude that a $(p:q)$-colouring of $G$ with $p=q(d+1)$ is dominating if and only if it is a proper colouring of $G^2$.
    By fixing $q=1$, this implies that $\dom(G)\ge d+1 \iff \chi(G^2)\le d+1$, and by considering all possible values of $q$, this implies that $\fdom(G)\ge d+1 \iff \chi_f(G^2)\le d+1$.
    The conclusion follows since $\dom(G)\le \fdom(G)\le d+1$ by \Cref{prop:d+1} and $\chi(G^2) \ge \chi_f(G^2)\ge \omega(G^2) \ge d+1$.
\end{proof}

Given two graph $G$ and $H$, an \emph{$H$-cover of $G$} is a mapping $f\colon V(G)\to V(H)$ such that $f$ induces a bijection from $N_G(v)$ to $N_H(f(v))$ for every vertex $v\in V(G)$. 
Observe that, if $H$ is $d$-regular and there exists an $H$-cover of $G$, then $G$ must also be $d$-regular.
Note that a proper $(d+1)$-colouring of $G^2$ when $G$ is a $d$-regular graph corresponds exactly to a $K_{d+1}$-cover of $G$. Deciding whether a $K_{d+1}$-cover of $G$ exists is NP-hard (see \cite[Theorem~8]{KPT98}), so deciding whether a $d$-regular graph $G$ is domatically full is NP-hard.

In \cite[Proposition~3.1]{FKY00}, the authors claim that, for every $d$-regular graph $G$, 
 
  \begin{equation}
      \label{eq:dom-full}
      \dom(G) = d+1 \iff \fdom(G) = d+1.
  \end{equation}
This would directly imply that determining whether a $d$-regular graph is fdom-full is NP-hard for every integer $d\ge 3$.
Unfortunately, that statement (which also appears in \cite[Theorem 4]{GoHe20}) turns out to be false.

\begin{prop}
    The Coxeter graph is cubic and fdom-full, but not domatically full. 
\end{prop}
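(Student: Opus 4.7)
The plan is to apply \Cref{prop:dom-full}, which reduces both claims to statements about the square $G^2$ of the Coxeter graph $G$. Recall that $G$ is cubic on $n=28$ vertices, distance-regular with girth $7$, and its automorphism group acts transitively on $V(G)$. Since $G$ is vertex-transitive so is $G^2$, and each closed neighbourhood in $G$ induces a $4$-clique in $G^2$; consequently
\[
4 \;\le\; \omega(G^2) \;\le\; \chi_f(G^2) \;=\; \frac{28}{\alpha(G^2)} \;\le\; \chi(G^2),
\]
using that for a vertex-transitive graph $H$ one has $\chi_f(H) = |V(H)|/\alpha(H)$. In particular, $\alpha(G^2)\le 7$, and both $\chi_f(G^2)$ and $\chi(G^2)$ are at least $4$.

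To show that $G$ is fdom-full it suffices to prove $\alpha(G^2)\ge 7$. An independent set of size $7$ in $G^2$ is exactly an \emph{efficient dominating set} (perfect $1$-code) of $G$: a set $C$ of $7$ vertices at pairwise distance at least $3$ in $G$, whose closed neighbourhoods partition $V(G)$. First I would exhibit one such code $C$ explicitly using a standard presentation of $G$ (e.g.\ its realisation built from the Fano plane or via the Heawood graph), which is a finite verification on $28$ vertices. Equivalently, by vertex-transitivity, sampling a uniformly random $\operatorname{Aut}(G)$-translate of $C$ yields a random dominating set $\bD$ satisfying $\pr{v\in \bD}=7/28=1/4$ for every $v$, which certifies $\fdom(G)\ge 4$ via \Cref{eq:fdom-proba}.

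To show that $G$ is not domatically full I would prove $\chi(G^2)\ge 5$. Indeed, any proper $4$-colouring of $G^2$ would decompose $V(G)$ into four independent sets of $G^2$, each of size at most $\alpha(G^2)=7$ and summing to $28$; so each class must have size exactly $7$, and hence be a perfect code of $G$. The task therefore reduces to showing that $V(G)$ does not decompose into $4$ pairwise disjoint perfect codes. Fix one perfect code $C$ (unique up to $\operatorname{Aut}(G)$, up to its finitely many orbits); then the remaining $21$ vertices would have to partition into three further perfect codes of $G$, and one analyses this restricted extension problem on $G[V(G)\setminus C]$ using the distance-regular structure of $G$.

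The hard part is precisely this non-existence step. Since $G$ has only $28$ vertices and $|\operatorname{Aut}(G)|=336$, the collection of perfect codes is very small and highly structured; I would enumerate its orbits and then check that no four of them are pairwise disjoint, either by hand using the rich symmetry of $G$ or through a short computer search. A cleaner finishing argument would be to exhibit an $\operatorname{Aut}(G)$-invariant obstruction --- for instance a vertex weighting or a parity invariant --- that any perfect code must realise with a value incompatible with four disjoint copies summing to the total weight of $V(G)$, which would rule out the decomposition at once.
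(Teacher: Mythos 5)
Your proposal is correct and is in substance the same as the paper's proof: for fdom-fullness the paper also exhibits a size-$7$ dominating set (which, since $7\cdot 4=28$, is automatically the perfect code you describe) and invokes vertex-transitivity via \Cref{prop:n/gamma} to get $\fdom(G)=28/7=4$, your detour through $G^2$ and \Cref{prop:dom-full} being an equivalent repackaging of the same computation. For the failure of domatic fullness, your reduction to the non-existence of four pairwise disjoint perfect codes is precisely what a verification of $\dom(G)=3$ amounts to (four disjoint dominating sets would each have size exactly $\gamma=7$ and hence be perfect codes), and like the paper --- which simply cites a computational check --- you leave this finite non-existence step to an enumeration rather than carrying it out, so your argument is no less complete than the published one.
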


\begin{proof}
    Let $G$ be the Coxeter graph, depicted in \Cref{fig:ourcontribution:coxeter}, which is known to be vertex-transitive. $G$ is cubic, has $n = 28$ vertices, and has domination number $\gamma = 7$ (a dominating set of size $7$ is shown in \Cref{fig:ourcontribution:coxeter}, and we must have $\gamma \ge n/4 = 7$ since each vertex dominates at most $4$ vertices). Since $G$ is vertex transitive, we may apply \Cref{prop:n/gamma} and conclude that $\fdom(G) = n/\gamma = 4 = \delta(G) + 1$.

    One can computationally verify that $\dom(G) = 3$ (or simply ask ``domatic number of the Coxeter graph'' as a query on the website wolframalpha.com). Therefore, we have a cubic graph which verifies $\fdom(G) = 4$ and $\dom(G) = 3$.
\end{proof}

\begin{prop}
    The odd graph $KG(7,3)$ is $4$-regular and fdom-full, but not domatically full. 
\end{prop}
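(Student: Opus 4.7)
The plan is to parallel the Coxeter-graph proof above by splitting into two claims: $\fdom(KG(7,3))=5$, and $\dom(KG(7,3))<5$. I first note that $KG(7,3)$ has $n=\binom{7}{3}=35$ vertices, is $4$-regular (each $3$-subset of $[7]$ is disjoint from exactly $\binom{4}{3}=4$ others), and is vertex-transitive under the natural action of the symmetric group $S_7$. Invoking \Cref{prop:n/gamma}, it follows that $\fdom(KG(7,3))=35/\gamma(KG(7,3))$, so combined with \Cref{prop:d+1} it suffices to verify that $\gamma(KG(7,3))=7$ in order to conclude $\fdom$-fullness. The lower bound $\gamma\ge n/(\delta+1)=7$ is standard, and for the upper bound I take $D$ to be the $7$ lines of the Fano plane on $[7]$ and check directly that every triple $T\notin D$ has a neighbour in $D$, i.e.\ that $[7]\setminus T$ contains a Fano line.

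For the harder direction $\dom(KG(7,3))<5$, the key ingredient is the structural claim that every dominating set of $KG(7,3)$ of size $7$ is the block set of a Steiner triple system on $[7]$ (i.e.\ a Fano plane). Let $D$ be such a dominating set, and for each $x\in [7]$ and each pair $\{x,y\}\subseteq [7]$ let $d_x$ and $\lambda_{xy}$ denote the number of triples of $D$ containing $x$ and $\{x,y\}$ respectively. A quick double counting (using $|D|+\sum_{T\in D}|\binom{[7]\setminus T}{3}|=7+28=\binom{7}{3}$) shows that $D$ is an intersecting family and that every non-$D$ triple is disjoint from exactly one triple of $D$. From $\sum_{\{x,y\}}\lambda_{xy}=7\cdot 3=21=\binom{7}{2}$, it suffices to rule out $\lambda_{xy}=0$ for any pair. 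Assuming such a pair $\{x,y\}$ exists, I examine the $10$ four-sets containing $\{x,y\}$: the $m=7-d_x-d_y$ which are complements of $D$-triples contain no $D$-triple, while the remaining $3+d_x+d_y$ must each contain a unique $D$-triple, necessarily meeting exactly one of $x$ or $y$. Each $D$-triple through $x$ (resp.\ $y$) lies in exactly one such $4$-set, so the count is $d_x+d_y$, yielding the contradiction $d_x+d_y=3+d_x+d_y$. Hence $\lambda_{xy}=1$ for every pair, making $D$ a Steiner triple system.

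Finally, supposing $\dom(KG(7,3))=5$, the vertex set would partition into $5$ dominating sets, necessarily each of size $35/5=7$. By the structural lemma, each block of the partition is a Fano plane, and the partition as a whole exhibits a Large Set of Steiner Triple Systems of order $7$, i.e.\ an LSTS$(7)$. It is a classical result of Cayley (1850) that LSTS$(7)$ does not exist, yielding the desired contradiction $\dom(KG(7,3))\le 4<\delta+1$ and concluding that $KG(7,3)$ is not domatically full. The main obstacle in executing this plan is the structural claim forcing minimum dominating sets to coincide with Fano planes; once it is in hand, the conclusion follows from the classical non-existence of LSTS$(7)$.
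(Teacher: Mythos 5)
Your proof is correct, and its second half takes a genuinely different route from the paper. For fdom-fullness you do essentially what the paper does: exhibit a dominating set of size $7$ (the paper draws one in a figure; you identify it as the lines of the Fano plane and verify domination via the fact that every $4$-set of points that is not the complement of a line contains a line), note $\gamma \ge n/(d+1)=7$, and invoke \Cref{prop:n/gamma} with vertex-transitivity. For the failure of domatic fullness, the paper simply cites the computation $\chi(KG(7,3)^2)=6$ from the literature and applies \Cref{prop:dom-full}; you instead prove a structural lemma that every dominating set of size $7$ is efficient (the closed neighbourhoods partition $V$, forcing an intersecting family in which every non-member triple is disjoint from exactly one member), then show by your double count on the ten $4$-sets through a putative pair with $\lambda_{xy}=0$ that the pair degrees are all $1$, so every $\gamma$-set is an $S(2,3,7)$; a domatic $5$-partition would then be a large set of Steiner triple systems of order $7$, which Cayley showed does not exist. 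I checked the counting: the contradiction $d_x+d_y = 3+d_x+d_y$ is right, and the reduction to LSTS$(7)$ is sound since five disjoint dominating sets must each have size exactly $35/5=7=\gamma$. Your argument is longer but self-contained apart from a classical 1850 result, and it yields strictly more information (a characterization of the minimum dominating sets of $KG(7,3)$ as Fano planes); the paper's argument is shorter but outsources the substantive step to a modern reference on $\chi(G^2)$. Both are valid.
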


\begin{proof}
It is well known that the odd graph $G = KG(7,3)$ is, by construction, vertex-transitive and $d$-regular (with $d=4$) on $n=35$ vertices. It is depicted in \Cref{fig:ourcontribution:kneser73} together with a dominating set of size $\gamma=7$, which is best possible since $n/\gamma=5 = d+1$. By \Cref{prop:n/gamma}, $\fdom(G)=n/\gamma = 5 = d+1$, so $G$ is indeed fdom-full.
On the other hand, it has been established in \cite[Section 4(B)]{Kim04} that $\chi(G^2)=6$, so by \Cref{prop:dom-full} $G$ is not domatically full.
\end{proof}

\begin{prop}
    The odd graph $KG(11,5)$ is $6$-regular and fdom-full, but not domatically full. 
\end{prop}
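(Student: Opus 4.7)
The graph $G \coloneqq KG(11, 5)$ has $|V(G)| = \binom{11}{5} = 462$ vertices, and each $5$-subset $S \subseteq [11]$ is adjacent to the $\binom{6}{5} = 6$ $5$-subsets of $[11] \setminus S$, so $G$ is $6$-regular. Being a Kneser graph, $G$ is vertex-transitive, so by \Cref{prop:n/gamma} one has $\fdom(G) = 462/\gamma(G)$. Combined with \Cref{prop:d+1}, this reduces fdom-fullness to showing $\gamma(G) = 66$.

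For the fdom-full part, the plan is to take $D$ to be the set of blocks of the (unique up to isomorphism) Steiner system $\mathcal{S}(4, 5, 11)$. By definition, $|D| = \binom{11}{4}/\binom{5}{4} = 66$, and every $4$-subset of $[11]$ lies in a unique block. The key step is to verify that $D$ is an efficient dominating set of $G$. Fisher-type double counting, using the replication numbers of $\mathcal{S}(4, 5, 11)$ (namely $66, 30, 12, 4, 1$ blocks through any $i$-subset of $[11]$ for $i \in \{0,1,2,3,4\}$), gives a triangular linear system whose solution shows, on the one hand, that the number of blocks disjoint from a fixed block of $D$ is $0$ (so $D$ is an independent set of $G$); and on the other hand, that for any $5$-subset $S \notin D$, the $6$-subset $T \coloneqq [11]\setminus S$ contains exactly one block of $D$. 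This exhibits a unique neighbor in $D$ for every vertex outside $D$, establishing that $D$ is a perfect dominating set. Hence $\gamma(G) = 66$ and $\fdom(G) = 7$.

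For the non-domatically-full part, by \Cref{prop:dom-full} it suffices to show $\chi(G^2) > 7$, where $G^2$ is the graph on $\binom{[11]}{5}$ in which $S \sim T$ iff $|S \cap T| \in \{0, 4\}$. Equivalently, one must rule out any partition of $V(G)$ into $7$ dominating sets: since $7 \cdot 66 = 462$ and $66 \cdot (6+1) = |V(G)|$, every such part would have to be a minimum dominating set saturating the sphere-packing bound, that is, a perfect code. I expect this to be the main obstacle. A natural route is first to show that every perfect code in $G$ is the block set of some copy of $\mathcal{S}(4, 5, 11)$ on $[11]$: this follows by a dual counting argument once one checks that no two elements of a perfect code share a $4$-subset. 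Then, using the uniqueness of $\mathcal{S}(4, 5, 11)$ up to isomorphism and the transitive action of the Mathieu group $M_{11}$ on its blocks, one can tightly constrain the shape of a putative partition into $7$ perfect codes and derive a contradiction by examining intersection patterns between two disjoint $\mathcal{S}(4,5,11)$-systems on the same $11$ points. Alternatively, the non-existence of such a partition can be established by exhaustive computer search, in the spirit of the verifications invoked in the two preceding propositions.
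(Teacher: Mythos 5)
Your first half is correct but takes a genuinely different route from the paper. The paper simply quotes $\gamma(KG(11,5))=66$ from \cite{OSX14} and applies \Cref{prop:n/gamma}; you instead exhibit the dominating set explicitly as the block set of $\mathcal{S}(4,5,11)$ and prove it is a perfect code by double counting. I checked your system: with replication numbers $66,30,12,4,1$ one gets, for a fixed block, intersection distribution $(n_0,n_1,n_2,n_3)=(0,15,20,30)$, and for a non-block $5$-set, $(m_0,\dots,m_4)=(1,10,30,20,5)$, so the block set is independent in $KG(11,5)$ and every other vertex has exactly one neighbour in it. This is self-contained where the paper relies on a citation, and it is a nice addition; note only that you should state explicitly that $\gamma\ge 462/7$ comes from the sphere-packing bound (each vertex dominates $7$ vertices), which is how the paper argues in the Coxeter case.

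The second half has a genuine gap. Your reduction is fine: a partition into $7$ dominating sets forces each part to be a perfect code of size $66$, and your observation that no two elements of a perfect code can share a $4$-subset (the $5$-set complementary to their union would be dominated twice, or break independence) does show every perfect code is the block set of a copy of $\mathcal{S}(4,5,11)$. But at that point the proposition is exactly equivalent to the nonexistence of a large set of $7$ pairwise disjoint $\mathcal{S}(4,5,11)$'s on $[11]$, and you do not prove this. In particular, ``examining intersection patterns between two disjoint $\mathcal{S}(4,5,11)$-systems'' cannot by itself produce a contradiction, since two disjoint such systems \emph{do} exist --- the known result (of Kramer--Mesner type) is that the maximum number of mutually disjoint $\mathcal{S}(4,5,11)$'s is exactly $2$, and that is itself a nontrivial theorem. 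Neither the $M_{11}$-based sketch nor the computer search is carried out, so the key step is missing. The paper closes this gap differently: it cites \cite{Kneser14} for $\chi(KG(11,5)^2)\ge 8$ and concludes via \Cref{prop:dom-full}. To complete your argument you would need either to invoke the disjoint-Steiner-system result explicitly as a citation, or actually perform the exhaustive verification you allude to.
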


\begin{proof}
It is well known that the odd graph $G = KG(11,5)$ is, by construction, vertex-transitive and $d$-regular (with $d=6$) on $n=462$ vertices. In \cite{OSX14}, it is stated that $\gamma(G)=66$, so by \Cref{prop:n/gamma}, $\fdom(G)=n/\gamma = 7 = d+1$, so $G$ is indeed fdom-full.
On the other hand, it is claimed in \cite[Theorem 2]{Kneser14} that $\chi(G^2)\ge 8$, so by \Cref{prop:dom-full} $G$ is not domatically full.
\end{proof}

\begin{rk}
    In \cite{Kim04}, an upper bound on $\alpha(KG(2k+1,k)^2)$ is derived that implies that $\chi_f(KG(2k+1,k)^2)>k+2$ when $k \bmod 6 \notin \{3,5\}$, and so $KG(2k+1,k)$ is not fdom-full in that case. 
    So only the values of $k$ congruent to $3$ or $5$ modulo $6$ could yield other counter-examples to \eqref{eq:dom-full} among odd graphs. The next candidate is therefore $KG(9,19)$.
    We suspect that there are counterexamples to \eqref{eq:dom-full} for arbitrarily large values of $d$.
\end{rk}

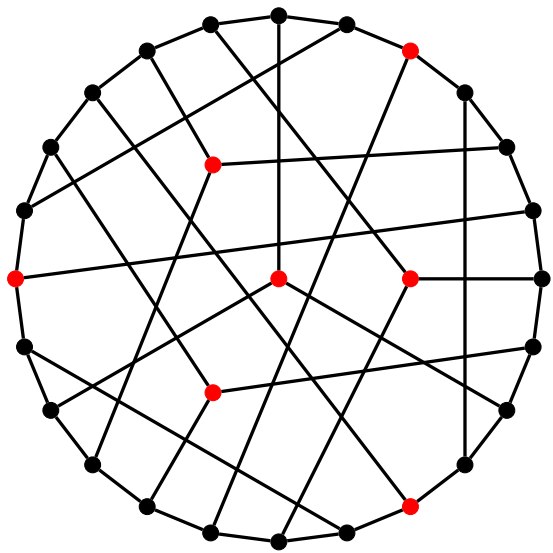
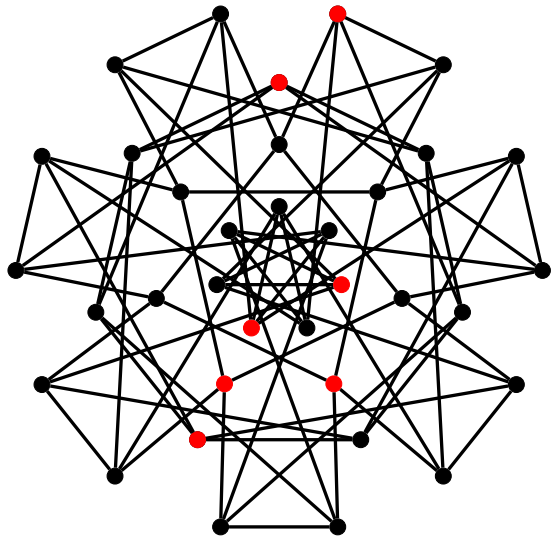
\begin{figure}[!htbp]
    \centering
    \begin{subfigure}[t]{0.45\textwidth}
    \centering
        \begin{tikzpicture}[every node/.style={shape=circle,draw=black,fill,inner sep=0pt,minimum size=6pt}, every edge/.style={line width=1.25pt,draw,black}]
        \graph [nodes={ circle}, n=24, clockwise,radius=3.5cm, empty nodes, name=A]
        { 1; 2; 3[fill=red,color=red]; 4; 5;6;7;8;9;10;11[fill=red,color=red];12;13;14;15;16;17;18;19[fill=red,color=red];20;21;22;23;24; subgraph C_n };

        \node[fill=red,color=red] (25) at ($(A 1)!0.5!(A 13)$) {};
        \node[fill=red,color=red] (26) at ($(25)!0.5!(A 23)$) {};
        \node[fill=red,color=red] (27) at ($(25)!0.5!(A 7)$) {};
        \node[fill=red,color=red] (28) at ($(25)!0.5!(A 15)$) {};

        \draw[] (25) edge[] (A 1);
        \draw[] (25) edge[] (A 9);
        \draw[] (25) edge[] (A 17);

        \draw[] (26) edge[] (A 23);
        \draw[] (27) edge[] (A 7);
        \draw[] (28) edge[] (A 15);

        \draw[] (26) edge[] (A 16);
        \draw[] (26) edge[] (A 5);

        \draw[] (27) edge[] (A 24);
        \draw[] (27) edge[] (A 13);

        \draw[] (28) edge[] (A 21);
        \draw[] (28) edge[] (A 8);

        \draw[] (A 2) edge[] (A 20);
        \draw[] (A 4) edge[] (A 10);
        \draw[] (A 12) edge[] (A 18);

        \draw[] (A 3) edge[] (A 14);
        \draw[] (A 11) edge[] (A 22);
        \draw[] (A 6) edge[] (A 19);
        
        \end{tikzpicture}

    \caption{A dominating set (in red) of size $7$ of the Coxeter graph.}
    \label{fig:ourcontribution:coxeter}
    \end{subfigure}
    \hfill
    \begin{subfigure}[t]{0.45\textwidth}
    \centering
    \begin{tikzpicture}[every node/.style={shape=circle,draw=black,fill,inner sep=0pt,minimum size=6pt}, every edge/.style={line width=1.25pt,draw,black}]
        \graph[clockwise, radius=2.5cm] {subgraph C_n [V={1,2,3,4,5,6,7},empty nodes,name=A]  };
        \graph[phase=0,radius=3.5cm,empty nodes,name=B,clockwise=14] {1,2,3,4,5,6,7,8,9,10,11,12,13,14};

        \coordinate[] (center) at (0,0);

        \node[color=red] at ($(A 1)$) {};
        \node[color=red] at ($(A 5)$) {};
        \node[color=red] at ($(B 12)$) {};

        \node[] (8) at ($(A 1)!0.33!(center)$) {};
        \node[] (9) at ($(A 2)!0.33!(center)$) {};
        \node[] (10) at ($(A 3)!0.33!(center)$) {};
        \node[color=red] (11) at ($(A 4)!0.33!(center)$) {};
        \node[color=red] (12) at ($(A 5)!0.33!(center)$) {};
        \node[] (13) at ($(A 6)!0.33!(center)$) {};
        \node[] (14) at ($(A 7)!0.33!(center)$) {};

        \node[] (15) at ($(A 1)!0.66!(center)$) {};
        \node[] (16) at ($(A 2)!0.66!(center)$) {};
        \node[color=red] (17) at ($(A 3)!0.66!(center)$) {};
        \node[] (18) at ($(A 4)!0.66!(center)$) {};
        \node[color=red] (19) at ($(A 5)!0.66!(center)$) {};
        \node[] (20) at ($(A 6)!0.66!(center)$) {};
        \node[] (21) at ($(A 7)!0.66!(center)$) {};

        \draw[] (15) edge[] (18);
        \draw[] (15) edge[] (19);

        \draw[] (16) edge[] (19);
        \draw[] (16) edge[] (20);

        \draw[] (17) edge[] (20);
        \draw[] (17) edge[] (21);

        \draw[] (18) edge[] (21);

        \draw[] (B 14) edge[] (B 1);
        \draw[] (B 2) edge[] (B 3);
        \draw[] (B 4) edge[] (B 5);
        \draw[] (B 6) edge[] (B 7);
        \draw[] (B 8) edge[] (B 9);
        \draw[] (B 10) edge[] (B 11);
        \draw[] (B 12) edge[] (B 13);

        \draw[] (B 1) edge[] (10);
        \draw[] (B 2) edge[] (10);
        \draw[] (B 3) edge[] (11);
        \draw[] (B 4) edge[] (11);
        \draw[] (B 5) edge[] (12);
        \draw[] (B 6) edge[] (12);
        \draw[] (B 7) edge[] (13);
        \draw[] (B 8) edge[] (13);
        \draw[] (B 9) edge[] (14);
        \draw[] (B 10) edge[] (14);
        \draw[] (B 11) edge[] (8);
        \draw[] (B 12) edge[] (8);
        \draw[] (B 13) edge[] (9);
        \draw[] (B 14) edge[] (9);

        \draw[] (B 1) edge[] (A 1);
        \draw[] (B 2) edge[] (A 5);
        \draw[] (B 3) edge[] (A 2);
        \draw[] (B 4) edge[] (A 6);
        \draw[] (B 5) edge[] (A 3);
        \draw[] (B 6) edge[] (A 7);
        \draw[] (B 7) edge[] (A 4);
        \draw[] (B 8) edge[] (A 1);
        \draw[] (B 9) edge[] (A 5);
        \draw[] (B 10) edge[] (A 2);
        \draw[] (B 11) edge[] (A 6);
        \draw[] (B 12) edge[] (A 3);
        \draw[] (B 13) edge[] (A 7);
        \draw[] (B 14) edge[] (A 4);

        \draw[] (B 1) edge[] (21);
        \draw[] (B 2) edge[] (20);
        \draw[] (B 3) edge[] (15);
        \draw[] (B 4) edge[] (21);
        \draw[] (B 5) edge[] (16);
        \draw[] (B 6) edge[] (15);
        \draw[] (B 7) edge[] (17);
        \draw[] (B 8) edge[] (16);
        \draw[] (B 9) edge[] (18);
        \draw[] (B 10) edge[] (17);
        \draw[] (B 11) edge[] (19);
        \draw[] (B 12) edge[] (18);
        \draw[] (B 13) edge[] (20);
        \draw[] (B 14) edge[] (19);

        \draw[] (8) edge[] (10);
        \draw[] (10) edge[] (12);
        \draw[] (12) edge[] (14);
        \draw[] (14) edge[] (9);
        \draw[] (9) edge[] (11);
        \draw[] (11) edge[] (13);
        \draw[] (13) edge[] (8);

        
        \end{tikzpicture}

    \caption{A dominating set (in red) of size $7$ of the odd graph $KG(7,3)$.}
    \label{fig:ourcontribution:kneser73}
    \end{subfigure}
    \caption{Two fdom-full regular graphs that are not domatically full.}
    \label{fig:ourcontribution:counterexamples}
\end{figure}

\section{Open problems}
We conclude with a list of open problems that arise naturally from our work.

\begin{problem}
    Given an integer $n\ge 5$, what is the smallest $q(n)$ such that every $n$-vertex graph $G$ with $\fdom(G) \ge 5/2$ has a dominating $(5q(n):2q(n))$-colouring? 
\end{problem}

We can infer from the proof of \Cref{prop:pqnotenough} that $q(n) = \Omega(\log n)$, and we have claimed in the previous subsection that $q(n) = \exp(O(n))$. One can show that a fractional dominating colouring of the graph obtained from $K_n$ by subdividing each edge into a hammock needs $\Omega(n)$ colours, and so $q(n)=\Omega(\sqrt{n})$. 

\begin{prop}
    Let $n\ge 2$, and let $G$ be the graph obtained from $K_n$ by subdividing each edge into a hammock. Then $\fdom(G)=5/2$, and if $\phi$ is a dominating $(5k:2k)$-colouring of $G$, for some integer $k\ge 1$, then $k \ge n/5$.
\end{prop}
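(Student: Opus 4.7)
The equality $\fdom(G) = 5/2$ will follow immediately from earlier results of the paper. Since $G$ contains a hammock by construction, $\fdom(G) \le 5/2$ by \Cref{prop:5/$2$-hammock}. Conversely, $G$ is connected, has minimum degree $2$ (internal vertices of the paths of the hammocks have degree $2$, and each original vertex $v_i\in V(K_n)$ has degree $2(n-1)$), and is too large to belong to $\B$ as soon as $n\ge 3$; hence $\fdom(G) \ge 5/2$ by \Cref{thm:5/2}. For $n=2$, $G$ is isomorphic to $C_5$, which is vertex-transitive with domination number $2$, so $\fdom(G)=5/2$ by \Cref{prop:n/gamma}.

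For the quantitative statement, fix a dominating $(5k:2k)$-colouring $\phi$ of $G$, and let $S_i \coloneqq \phi(v_i) \subseteq [5k]$ for each original vertex $v_i\in V(K_n)$, so that $|S_i|=2k$. The plan is to show that $|S_i \cap S_j| = k$ for every pair $i\neq j$. First I would use the suspended $2$-path $v_i \link w_{ij} \link v_j$ of the hammock between $v_i$ and $v_j$: the domination constraint at $w_{ij}$ gives $S_i\cup\phi(w_{ij})\cup S_j = [5k]$, from which $|S_i\cup S_j|\ge 5k - 2k =3k$, and so $|S_i\cap S_j|\le k$. For the reverse inequality, I would use the suspended $3$-path $v_i\link x_{ij}\link y_{ij}\link v_j$ of the same hammock: the domination constraints at $x_{ij}$ and $y_{ij}$ give $\phi(v_i)\cup\phi(x_{ij})\cup\phi(y_{ij}) = [5k]$ and $\phi(x_{ij})\cup\phi(y_{ij})\cup\phi(v_j)=[5k]$, which combined yield
\[
\phi(x_{ij})\cup\phi(y_{ij}) \;\supseteq\; \bigl([5k]\setminus S_i\bigr)\cup \bigl([5k]\setminus S_j\bigr) \;=\; [5k]\setminus(S_i\cap S_j).
\]
Since the left-hand side has cardinality at most $4k$, this forces $|S_i\cap S_j|\ge k$. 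Combined with the upper bound, $|S_i\cap S_j|=k$ exactly.

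The final step is a Gram matrix argument. Viewing $e_i \coloneqq \mathbf{1}_{S_i}$ as a vector in $\R^{5k}$, the previous paragraph gives $e_i\cdot e_j = k$ for $i\neq j$ and $e_i\cdot e_i = 2k$, so the Gram matrix of $(e_1,\dots,e_n)$ equals $k(I_n+J_n)$. The matrix $I_n+J_n$ has eigenvalues $n+1$ (with multiplicity $1$) and $1$ (with multiplicity $n-1$), all positive, so the Gram matrix is positive definite; hence the family $(e_i)_{i\in[n]}$ is linearly independent in $\R^{5k}$, which forces $n \le 5k$, i.e. $k\ge n/5$. The main conceptual step is the derivation of $|S_i\cap S_j|=k$ from the two different path lengths of a hammock; once this is in hand, the linear-algebraic conclusion is immediate.
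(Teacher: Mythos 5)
Your proposal is correct and follows essentially the same route as the paper: both derive $|\phi(v_i)\cap\phi(v_j)|=k$ by playing the domination constraints on the $2$-path (upper bound) against those on the $3$-path (lower bound) of each hammock, and then conclude $n\le 5k$ from the rank of the Gram/incidence matrix, whose off-diagonal entries are all $k$. The only differences are cosmetic — your explicit treatment of $n=2$ and the eigenvalue computation for $k(I_n+J_n)$ where the paper simply asserts the rank bound.
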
 

\begin{proof}
    Since $G$ contains a hammock, by \Cref{prop:5/$2$-hammock} one has $\fdom(G)\le 5/2$. Since $G$ is connected, has minimum degree $2$, and $G\notin \B$, by \Cref{thm:5/2} one has $\fdom(G) \ge 5/2$.
    We conclude that $\fdom(G)=5/2$.
    
    Let $\phi$ be a dominating $(5k:2k)$-colouring of $G$. 
    Let $u,v$ be a pair of vertices from the graph $K_n$ before subdivision. 
    Let $u \link x \link y \link v$ and $u \link z \link v$ be the two suspended paths of the hammock between $u$ and $v$ in $G$.
    We have $\phi(x)\cup \phi(y) \cup \big(\phi(u) \cap \phi(v)\big) = [5k]$, so $|\phi(u)\cap \phi(v)| \ge 5k - |\phi(x)| - |\phi(y)| = k$. 
    We have $\phi(z) \cup \big(\phi(u) \cup \phi(v)\big) = [5k]$, so $|\phi(u)\cap \phi(v)| \le 4k - |\phi(u)\cup \phi(v)| \le 4k - \big( 5k - |\phi(z)|\big) = k$.
    We conclude that $|\phi(u)\cap \phi(v)|=k$.
    
    Let $B_1, \ldots, B_{5k}$ be the colour classes of $\phi$; we claim that we must have $5k \ge n$. To see this, let $M$ be the $n\times 5k$ incidence matrix of the vertices and colours of $\phi$. Then $MM^T = (b_{i,j})_{i,j \in [n]}$ where $b_{i,i}=2k$ and $b_{i,j}=k$ if $i\neq j$. So $n = rank(B) \le rank(M) \le 5k$.
\end{proof}

\Cref{thm:5/2} indirectly provides a complete characterisation of graphs $G$ such that $\fdom(G)<5/2$: those are all graphs with minimum degree $\delta(G)<2$, or which contain some $B\in \B$ as an isolated subgraph. In particular, one can observe that $\fdom(G) \in \{1,2,7/3\}$ in that case.
In \cite[Section 3]{McSh89}, the authors characterise the class $\Gamma$ of all graphs $G$ such that $|V(G)|/\gamma(G)=5/2$. Extending this result to the fractional domatic number would be of interest. 

\begin{problem}
    Characterise the class $\mathscr{F}$ of all graphs $G$ such that $\fdom(G) = 5/2$. 
\end{problem}

We note that these classes are incomparable. For instance, $K_{2,3} \in \Gamma \setminus \mathscr{F}$ and $K_{2,4} \in \mathscr{F}\setminus \Gamma$. 

\begin{problem}
    Provide a full description of the set of possible values for $\fdom(G)$.
\end{problem}

We note that the graphs $G_n$ constructed in the proof of \Cref{prop:bipartite-girth6} have $\fdom(G_n) = \frac{5n-2}{2n-1}$, so the aforementioned set contains infinitely many values in the interval $[5/2, 5/2+\eps]$ for every $\eps>0$. 
On the other hand, there is no graph which has fractional domatic number in the range $(1,2) \cup (2,7/3) \cup (7/3, 5/2)$.
In contrast, the possible range of values for the fractional chromatic number $\chi_f$ is known to be $\big(\{1\}\cup [2,\infty)\big)\cap \mathbb{Q}$. Indeed, for every rational number $a/b \in \mathbb{Q}$ with $a/b \ge 2$, one has $\chi_f(KG(a,b))=a/b$, where $KG(a,b)$ is the Kneser Graph on the $b$-subsets of $[a]$. Moreover $\chi_f(G)=1$ if $G$ has no edge, and $\chi_f(G) \ge \omega(G) \ge 2$ if $G$ has at least one edge. 

\begin{problem}
    Does there exist a sharp complexity threshold for the \textsc{Fractional Dominating Colouring Problem}, i.e. is there some $x\in \mathbb{R}$ such that, for every $\eps>0$, given any graph $G$, one can decide in polynomial time whether $\fdom(G)\ge x-\eps$, while it is NP-hard to decide whether $\fdom(G) \ge x+\eps$?
\end{problem}

One can infer from \Cref{thm:5/2} and from \Cref{cor:NP-hard} that $5/2 \le x \le 3$, if such a threshold $x$ exists.
We note that the \textsc{Fractional Proper Colouring Problem} has such a threshold, whose value is $2$: for all $\eps>0$, given a graph $G$, it is NP-hard to decide whether $\chi_f(G)\le 2+\eps$ \cite[Theorem 3.9.2]{ScUl13}, while $\chi_f(G)\le 2$ if and only if $G$ contains no odd cycle.

In what follows, given a class of graphs $\sG$, we denote $\fdom(\sG) \coloneqq \inf \{\fdom(G) : G\in \sG\}$ and $\dom(\sG) \coloneqq \min \{\dom(G) : G\in \sG\}$.

\begin{problem}
    Is it possible to remove the planar hypothesis in \Cref{thm:planar}? If not, what is the value of 
    \[\lim_{g\to\infty} \fdom(\{G : \delta(G)=2, {\rm girth}(G)\ge g\})?\]
\end{problem}

We have proved \Cref{thm:planar} by relying on the existence of long suspended paths in planar graphs of large girth with minimum degree $2$. This is not guaranteed in non-planar graphs of large girth: for instance, the $1$-subdivision of a cubic graph of large girth contains only suspended paths of length $2$. So a positive answer to this question would require new ideas. 

For every integer $g \ge 3$, we denote $\plangirth{g}$ the class of planar graphs of  minimum degree $2$ and girth at least $g$.
\begin{problem}
    For every integer $g\ge 3$, compute the extremal value 
    \[ \fdom(\plangirth{g}).\]
\end{problem}

We have already established that this extremal value is $2$ if $g\le 4$, $7/3$ if $5\le g \le 7$, and $5/2$ if $8 \le g \le 10$. We suspect that this extremal value is attained when $G$ is a cycle, which yields the following conjecture.

\begin{conj}
    For every integer $k \ge 1$, if $3k-1\le g \le 3k+1$, then
    \[ \fdom(\plangirth{g}) = \frac{3k+1}{k+1}.\]
\end{conj}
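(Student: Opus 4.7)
The upper bound is immediate: for every $g\in\{3k-1,3k,3k+1\}$, the cycle $C_{3k+1}$ lies in $\plangirth{g}$, and by \Cref{prop:n/gamma} one has $\fdom(C_{3k+1}) = (3k+1)/\lceil (3k+1)/3\rceil = (3k+1)/(k+1)$.

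For the lower bound, the plan is to mirror \Cref{thm:planar-precise} and argue by induction on the number of edges of a hypothetical minimum counterexample $G\in \plangirth{g}$. Using \Cref{lem:gluing} on each block, I may assume $G$ is $2$-connected. If $G$ is a cycle $C_n$ with $n\ge g\ge 3k-1$, a direct case analysis of $\fdom(C_n)=n/\lceil n/3\rceil$ shows that the minimum over $n\ge 3k-1$ is attained at $n=3k+1$ and equals $(3k+1)/(k+1)$, so $G$ is not a counterexample. Otherwise $G$ has a vertex of degree at least $3$, and I would contract every suspended path into a single edge to obtain a planar multigraph $G^*$ of minimum degree at least $3$, with no loops since the endpoints of any suspended path are distinct by definition. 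Euler's formula then yields a face of $G^*$ of length at most $5$, corresponding to a closed walk $W$ of $G$ composed of at most $5$ suspended paths whose total length is at least $g\ge 3k-1$. As soon as some suspended path $P$ of $W$ has length $\ell$ satisfying $\lceil \ell/3\rceil \ge (k+1)/2$, an elementary inequality yields $(3\lceil \ell/3\rceil-1)/\lceil \ell/3\rceil \ge (3k+1)/(k+1)$, so \Cref{lem:strong-pathlemma} applied to $P$ together with the inductive bound on $G\setminus P$ gives $\fdom(G)\ge (3k+1)/(k+1)$, contradicting the choice of $G$.

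The main obstacle is the case where every suspended path of $W$ is too short for \Cref{lem:strong-pathlemma} to close the induction on its own. This situation is unavoidable when the total length $\ge 3k-1$ is spread across up to $5$ paths, since the pigeonhole bound only guarantees a path of length $\approx 3k/5$, whereas \Cref{lem:strong-pathlemma} requires $\ell \approx 3k/2$ to produce the target ratio. I would address this by developing a face-wise generalisation of \Cref{lem:strong-pathlemma}: given a short face of $G^*$ whose boundary walk has total length at least $3k-1$ in $G$, one should be able to extend any fractional dominating colouring of $G$ minus all internal vertices of the face (with careful handling so that minimum degree $2$ is preserved at the corners, possibly by removing only a well-chosen subset of the constituent suspended paths) to the whole of $G$ while preserving the $(3k+1)/(k+1)$ bound.

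Technically, this face-wise lemma would combine two ingredients. First, explicit $(3k+1:k+1)$-colourings of paths of each relevant length with prescribed endpoint patterns, playing the role of $\lambda,\mu,\nu$ in the proof of \Cref{lem:strong-pathlemma}, designed so that the constraints arising simultaneously at all corners of the face can be satisfied. Second, a multi-endpoint generalisation of \Cref{lem:extension} matching up to $5$ corner colourings through a random permutation argument reminiscent of \Cref{lem:gluing}. The budget is tight — $3k-1$ total length against $(3k+1)/(k+1)$ coverage — so the savings obtained at the corners must exactly compensate for the shortness of each individual path of the face. Small faces (of length $2$ or $3$) and small values of $k$ will likely require additional reducible configurations along the lines of those in the proof of \Cref{thm:5/2}, possibly supported by a light discharging argument on $G^*$ to avoid pathological face-degree distributions. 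I expect this face-wise extension lemma to be the principal technical hurdle of the proof.
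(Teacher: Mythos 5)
This statement is an open conjecture in the paper: the authors give no proof of it, so there is no official argument to compare yours against. Judged on its own terms, your proposal does not close the problem. The upper bound via $C_{3k+1}\in\plangirth{g}$ and \Cref{prop:n/gamma} is correct, your treatment of the cycle base case is correct, and your diagnosis of the quantitative mismatch is accurate: contracting suspended paths and invoking Euler's formula only guarantees a suspended path of length roughly $g/5\approx 3k/5$, whereas \Cref{lem:strong-pathlemma} needs $\lceil \ell/3\rceil \ge (k+1)/2$, i.e.\ $\ell$ on the order of $3k/2$, to reproduce the ratio $(3k+1)/(k+1)$. This factor-of-$5/2$ deficit is exactly why \Cref{thm:planar-precise} requires girth $15k-14$ rather than $3k-1$, and it is where the entire difficulty of the conjecture lives.

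The genuine gap is the ``face-wise extension lemma'' you invoke to bridge that deficit. You describe what it should accomplish, but supply none of its content: no explicit $(3k+1:k+1)$-colourings of the constituent paths with mutually compatible corner patterns, no multi-endpoint analogue of \Cref{lem:extension} (whose two-endpoint version already relies delicately on the condition $r<1/2$ and on the pair of colourings $\bD_0,\bD_1$ with prescribed intersection behaviour at the two cut vertices --- it is not clear this machinery extends to five corners), and no argument that the corner savings can compensate when the boundary length $3k-1$ is spread over as many as five short paths. You also do not address how to guarantee that deleting the interior of a whole face leaves a graph in $\plangirth{g}$ on which induction applies. Since everything outside this lemma is a routine adaptation of \Cref{thm:planar-precise}, what you have is a reasonable plan of attack, honestly flagged as such, rather than a proof; the statement remains open.
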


Given a graph class $\sG$ and an integer $d\ge 1$, we denote $\sG[\delta\ge d]$ the family of graphs in $\sG$ of minimum degree at least $d$. 
When $\sG[\delta\ge d] \neq \emptyset$ for every $d\ge 1$, we say that $\sG$ is \emph{$\dom$-bounded} if \[ \dom(\sG[\delta \ge d]) \underset{d\to\infty}{\to} \infty, \]
and we say that $\sG$ is \emph{linearly $\dom$-bounded} if \[ \dom(\sG[\delta \ge d]) =  \Theta(d).\]
A similar definition can be made for a \emph{(linearly) $\fdom$-bounded class}. We note that, by \Cref{prop:asymptotic-bound}, the class $\sG$ of all graphs is $\fdom$-bounded, since we have
\[ \fdom(\sG[\delta\ge d]) \underset{d\to\infty}{\sim} \frac{d}{\ln d}. \]

\begin{problem}
    Explicit some non-trivial classes of graphs that are (linearly) $\dom$/$\fdom$-bounded. 
\end{problem}

We observe that if $\sG$ contains the class of bipartite graphs, then $\sG$ is neither $\dom$-bounded (because of \Cref{prop:pqnotenough} with $q_0\coloneqq 1$) nor linearly $\fdom$-bounded (because of \Cref{prop:tight-fdom}). The same holds if $\sG$ contains the class of split graphs (because of Remarks~\ref{rk} and~\ref{rk2}).

\bibliography{references}
\bibliographystyle{plain}
\end{document}